\documentclass[leqno,11pt]{amsart}
\usepackage{amsmath,amscd,amsthm,amsxtra}
\usepackage{epsfig,graphics,color,colortbl}
\usepackage{amssymb,latexsym}
\usepackage{mathrsfs,upgreek} %eucal
\usepackage[poly,all,cmtip]{xy}
\usepackage{arydshln}
\usepackage{adjustbox,multirow}
\usepackage[utf8]{inputenc}
\usepackage{booktabs}
\usepackage{tikz-cd}
\usetikzlibrary{arrows,matrix}
\tikzset{tab/.style={matrix of math nodes,column sep=-.35, row sep=-.35,text height=7pt,text width=7pt,align=center,inner sep=2,font=\footnotesize}}
\usepackage[normalem]{ulem}

\usepackage[colorlinks=true, pdfstartview=FitV, linkcolor=blue, citecolor=blue, urlcolor=blue]{hyperref}

\usepackage{float}
\usepackage{enumerate}

\newcommand{\g}{\mathfrak{g}}

\newcommand{\iso}{\cong}  % isomorphism
\newcommand{\clfw}{\overline{\Lambda}} % classical fundamental weight
\newcommand{\B}{\mathbf{B}}
\newcommand{\abs}[1]{\lvert #1 \rvert} % absolute value
\newcommand{\Abs}[1]{\lVert #1 \rVert} % absolute value
\newcommand{\inner}[2]{\langle #1, #2 \rangle} % inner product
\newcommand{\cosetrepr}[1]{\lfloor #1 \rfloor} % minimal length coset representative
\newcommand{\virtual}[1]{\widetilde{#1}}
\newcommand{\coroot}{\alpha^{\vee}}  % a coroot
\newcommand{\et}{e}  % crystal e operator
\newcommand{\ft}{f}  % crystal f operator
\newcommand{\bopen}{{\color{blue}(}}
\newcommand{\bclose}{{\color{red})}}
\newcommand{\zero}{\mathbf{0}}
\newcommand{\oneb}{\mathbf{1}}

\newcommand{\aff}[1]{\widehat{#1}}  % affine object
\newcommand{\gaff}{\aff{\g}}  % (dual untwisted) affine Lie algebra
\newcommand{\gutw}{\aff{\g}^{\vee}}  % untwisted affine Lie algebra

\newcommand{\ZZ}{\mathbb{Z}}
\newcommand{\QQ}{\mathbb{Q}}
\newcommand{\RR}{\mathbb{R}}

\newcommand{\bon}{\overline{1}}
\newcommand{\btw}{\overline{2}}

\newcommand{\mcB}{\mathcal{B}}
\newcommand{\mcC}{\mathcal{C}}
\newcommand{\mcI}{\mathcal{I}}
\newcommand{\mcJ}{\mathcal{J}}
\newcommand{\mcP}{\mathcal{P}}
\newcommand{\mcR}{\mathcal{R}}

\newcommand{\cc}{\mathbf{c}}

\newcommand{\ii}{\mathbf{i}}
\newcommand{\kk}{\mathbf{k}}

\newcommand{\pp}{\mathbf{p}}

\DeclareMathOperator{\proj}{proj} % Projection
\DeclareMathOperator{\Inv}{Inv} % Inversion set
\DeclareMathOperator{\wt}{wt} % weight

\newtheorem{thm}{Theorem}[section]
\newtheorem{prop}[thm]{Proposition}
\newtheorem{cor}[thm]{Corollary}
\newtheorem{lem}[thm]{Lemma}

\theoremstyle{definition}
\newtheorem{df}[thm]{Definition}
\newtheorem{prob}[thm]{Problem}
\theoremstyle{remark}
\newtheorem{rem}[thm]{Remark}
\newtheorem{ex}[thm]{Example}

\definecolor{darkred}{rgb}{0.7,0,0} % darkred color
\newcommand{\defn}[1]{{\color{darkred}\emph{#1}}} % emphasis of a definition

\definecolor{UQgold}{RGB}{196, 158, 54} % UQ gold
\definecolor{UQpurple}{RGB}{73, 7, 94} % UQ purple

\numberwithin{equation}{section}

\usepackage[colorinlistoftodos]{todonotes}

\title[Special KR crystals]{Special Kirillov--Reshetikhin crystals}

\author[I.-S.~Jang]{Il-Seung Jang}
\address[I.-S. Jang]{Department of Mathematics, Incheon National University, Incheon 22012, Republic of Korea}
\email{ilseungjang@inu.ac.kr}
\urladdr{https://sites.google.com/view/isjang/home}

\author[T.~Scrimshaw]{Travis Scrimshaw}
\address[T.~Scrimshaw]{Department of Mathematics, Hokkaido University, 5 Ch\=ome Kita 8 J\=onishi, Kita Ward, Sapporo, Hokkaid\=o 060-0808}
\email{tcscrims@gmail.com}
\urladdr{https://tscrim.github.io/}

\keywords{crystal, Kirillov--Reshetikhin crystal, minuscule representation}
\subjclass[2010]{81R50, 17B37, 05E10}

\begin{document}

\begin{abstract}
We give a uniform realization of Kirillov--Reshetikhin crystals $B^{r,s}$, in terms of PBW crystals, for all affine types, $s \ge 1$, and nodes $r$ in the orbit of the $0$ node in the Dynkin diagram.
Our proof is almost uniform except we prove some technical lemmas by first proving the case when the $r$-th fundamental weight $\Lambda_r$ is minuscule by using properties of minuscule crystal and then extend this to the case when $\Lambda_r$ is cominuscule using virtual crystals.
\end{abstract}

\maketitle

\setcounter{tocdepth}{1}
\tableofcontents

%%%%%%%%%%%%%%%%%%%%%%%%%%%%%%%%%%%%%%%%
\section{Introduction}

The affine quantum group $U_q'(\gaff)$ (without derivation) admits finite dimensional representations with an important class of irreducible representations called Kirillov--Reshetikhin (KR) modules with a remarkably rich structure (see, \textit{e.g.},~\cite{ChHe10,HeLe20,Her26,Lec11,Oka26,OS25} and references therein).
One of the distinguishing features of KR modules is they (conjecturally) admit crystal (psuedo)bases~\cite{BS20,KKMMNN91,KKMMNN92,Okado07,OS08,Naoi17,NS21}.
These are denoted by $B^{r,s}$, where $r$ corresponds a fundamental weight of classical Lie algebra and $s$ is a positive integer.
Despite intense study over the past 25 years, it remains an open problem to provide a uniform construction and proof for KR crystals.
This is in sharp contrast to Kashiwara crystals $B(\lambda)$ for highest weight representations (including for the (Drinfel'd--Jimbo) quantum group associated to any Kac--Moody Lie algebra)~\cite{Kamnitzer07,K90,K91,K93,L95-2,LP08,Nakajima03,SalisburyS15II}.

There are combinatorial realizations of KR crystals in nonexceptional types~\cite{FOS09,JK19,Kwon13,Shi02} and in various cases in exceptional types~\cite{BS21,JS10,KMOY07,MMO10,Jang22,Yamane98}, but these constructions are type specific and frequently rely on diagram automorphisms (which notably do not exist for type $E_8^{(1)}$).
On the other hand, there is a uniform construction for $B^{r,1}$ given in terms of (projected level zero and quantum) Littelmann paths~\cite{Kashiwara02,LNSSS17a}, but this does not extend for $s > 1$.
Most of the KR crystals are (conjecturally) perfect~\cite{KKMMNN92,KMOY07,FOS10,Hiroshima21,MMO10,Yamane98}, a technical condition that is equivalent to having an isomorphism $B^{r,s} \otimes B(\lambda) \iso B(\mu)$ for any level $s / c_r$ weight $\lambda$ for specific $c_r$ (depending only on $r$ and level $s / c_r$ weight $\mu$.
This construction (known as the Kyoto path model) in turn implies that we can construct the KR crystal up to the last $s / c_r$ $0$-arrows in a $0$-string by using Demazure crystals~\cite{FSS07,ST12} (see also~\cite{LS19III}).

Among KR crystals, a particularly nice class are those $r$ such that there exists an automorphism $\phi$ of the (affine) Dynkin diagram such that $\phi(r) = 0$ (see Table~\ref{tab:dynkins} below), and such nodes and KR crystals are called special.
These can also be characterized by the KR crystals $B^{r,s}$ such that $B^{r,s} \iso B(s\Lambda_r)$ as classical crystals (crystals for finite dimensional simple Lie algebra $\g$ obtained by removing $0$ from the Dynkin diagram).
Moreover, the classical crystal $B(\Lambda_r)$ is a cominuscule (resp.\ minuscule) representation when $\gaff$ is an untwisted (resp.\ dual untwisted) affine Lie algebra, and in fact, $r$ is special if and only if it is (co)minuscule.
Therefore, its natural to first restrict ourselves to studying special KR crystals.

The main result of this paper is a uniform construction of KR crystals for all special nodes $r$ (Theorem~\ref{thm:main1}).
Our proof is almost uniform, except we have to split it into two cases depending on if the node $r$ is minuscule or cominuscule.
Let us now summarize our construction, which is a uniform version of the KR crystal realizations given in~\cite{JK19,Jan25,Kwon13}.
First, we start with the crystal $B(\infty)$ of the lower half of the quantum group $U_q(\g)^-$, which has a parameterization given by Lusztig~\cite{Lusztig90} (see also~\cite{Saito94}) that describes the leading term of the crystal basis element into PBW monomials.
There is a natural embedding (up to a trivial weight shift) of $B(s\Lambda_r)$ inside of $B(\infty)$, and the image is supported on an explicit set of negative roots such that in the limit $s \to \infty$, it corresponds to the crystal $\B^J$ for the parabolic Verma module of highest weight $0$.
We then promote this to a $U_q'(\gaff)$-crystal by defining the $e_0$ (resp.\ $f_0$) crystal operator to add (resp.\ subtract) $1$ from $\Theta \equiv -\alpha_0 \pmod{\delta}$, where $\delta$ is the null root of $\gaff$, from the Lusztig datum, which is unique  up to trivial swaps because $\Lambda_r$ is minuscule (see Remark~\ref{rem: w^J}).
This gives a combinatorial construction for certain crystals of prefundamental representations~\cite{HJ12,Wang23} (Remark~\ref{eq:B upper J} and Remark~\ref{rem:various}).

For our proof, we use a reconstruction theorem from~\cite{JS10}, which allows us to uniquely determine the KR crystal by three different branching rules and requires two technical statements about special KR crystals (Lemma~\ref{lemma:decomp_by_weight} and Lemma~\ref{lemma:hw_decomp_elements}).
The proof of these lemmas is where we require to split our argument to the case when $r$ is minuscule and not.
Indeed, the minuscule cases are shown by using the simple characterization of (the unique component) $B(s\Lambda_r) \subseteq B(\Lambda_r)^{\otimes s}$ from~\cite[Prop.~7.29]{Scrimshaw20} (see also~\cite[Thm~1.2, Rem.~2.30]{DEKMF22} and~\cite{L94}), whereas the analogous property does not hold for cominuscule nodes (Example~\ref{ex:ordering_failure}).
To prove the cominuscule case, we essentially embed the crystal inside of the corresponding dual type, where the node is minuscule, by using virtual crystals~\cite{K96, OSS03II, OSS03III}.

This paper also contains two other strongly correlated results.
For a minuscule weight $\Lambda_r$, the crystal is described by the orbit of the Weyl group on $\Lambda_r$ (more precisely, the lowering operators correspond to the Demazure or $0$-Hecke product), which is naturally identified with an order ideals of a minuscule heap~\cite{Wildberger03} (see also~\cite[Sec.~11]{Proctor84}) via the Weyl group.
From the characterization of $B(s\Lambda_r) \subseteq B(\Lambda_r)^{\otimes s}$, we can generalize this to a classical crystal on reverse plane partitions on the minuscule heap (Proposition~\ref{prop:direct_limit_finite_type}), and show in the limit $s \to \infty$, this describes the polyhedral realization~\cite{Nakashima99,NZ97} (which arises from~\cite{K93}) of $\B^J$ (Proposition~\ref{prop:heap_string}).

The second is a combinatorial description of the statistic $\varepsilon_i^*$ to determine $B(s\Lambda_r) \subseteq \B^J \subseteq B(\infty)$ when $r$ is special by using the $\ii$-trails introduced by Berenstein and Zelevinsky~\cite[Thm.~3.7]{BZ01}.
More precisely, we describe $\varepsilon_i^*$ as a maximum over paths in a certain directed graph; that is, as a piecewise-linear (i.e., tropical) polynomial.
The cases for types ADE were previously known~\cite{Greene74,JK19,Jang22,Kwon13}, and we complete the picture by determining the paths for the remaining (co)minuscule nodes in types BCD (Theorem~\ref{thm:path formula}).
Our proof is again based on virtual crystals, but now from diagram foldings of the ADE type~\cite{K96} (see also~\cite{OSS03III,OSS03II,NS03,NS05II,Kwon18a}).

The paper is organized as follows.
In Section \ref{sec:preliminaries}, 
we recall affine root systems and study some properties of affine Weyl groups. Also we recall the notion of crystals, introduce the important classes of crystals, and review some properties in Section \ref{sec:crystals}.
In Section \ref{sec:minuscule crystals}, we study some properties of minuscule crystals, which play an important role for proving the main result of this paper.
In Section \ref{sec: construction of special KRs}, we state the main result in this paper, and give its proof. Furthermore, we give some problems related to this result.
In Section \ref{sec:path characterization}, we give the characterization of $\B^{J,s}$ in terms of path combinatorics.

\subsection*{Acknowledgements}

The authors thank Gabriel Frieden, Joel Kamnitzer, Gleb Koshevoy, and Jae-Hoon Kwon for many useful discussions.
The authors thank Anna Pusk\'as for useful discussions about Proposition~\ref{prop:twisted_roots} and the proof in untwisted types.

I.-S.~Jang is supported by the National Research Foundation of Korea (NRF) grant funded by the Korea government(MSIT) (No.~RS-2026-25483668).
T.~Scrimshaw~was partially supported by Grant-in-Aid for JSPS Fellows 21F51028, for Scientific Research for Early-Career Scientists 23K12983, and JSPS KAKENHI Grant Numbers JP23K12983 and JP26K06735.
Part of this work was done while T.~Scrimshaw was at the University of Queensland and OCAMI at Osaka Metropolitan University.
This work benefited from computations done using \textsc{SageMath}~\cite{sage,combinat}.

This work was partly supported by Osaka City University Advanced Mathematical Institute (MEXT Joint Usage/Research Center on Mathematics and Theoretical Physics JPMXP0619217849).
This article is based upon work supported by the National Science Foundation under grant DMS-1929284 while T.~Scrimshaw was in residence at the Institute for Computational and Experimental Research in Mathematics in Providence, RI, during the Categorification and Computation in Algebraic Combinatorics semester program.

%%%%%%%%%%%%%%%%%%%%%%%%%%%%%%%%%%%%%%%%
\section{Affine root systems} \label{sec:preliminaries}

Let $\gaff$ be an affine Kac--Moody Lie algebra with index set $I$, Cartan matrix $\{A_{ij}\}_{i,j\in I}$, weight lattice $\aff{P}$, coweight lattice $\aff{P}^{\vee}$, fundamental weights $\{\Lambda_i\}_{i \in I}$, root system $\aff{\Phi}$ with simple roots $\aff{\Delta} := \{\alpha_i\}_{i \in I}$ and null root $\delta$, simple coroots $\{\coroot_i\}_{i \in I}$, Weyl group $\aff{W}$ with the corresponding simple reflections $\{s_i\}_{i \in I}$, Drinfel'd--Jimbo quantum group $U_q(\aff{\g})$ (taken as a $\QQ(q)$-algebra), and canonical pairing $\langle \ ,\ \rangle \colon \aff{P} \times \aff{P}^{\vee} \to \ZZ$ defined by $\langle \alpha, \alpha_j^{\vee} \rangle = A_{ij}$.
We denote $U_q'(\aff{\g}) := U_q(\aff{\g}') = U_q([\aff{\g}, \aff{\g}])$.
We realized the root lattice $\aff{Q}$ inside the weight lattice with the Cartan matrix defining the linear map expressing the simple roots in terms of the fundamental weights (similarly for the coroot lattice $\aff{Q}^{\vee}$ inside of $\aff{P}^{\vee}$).
The kernel of this map is generated by $\delta$.
Let $\aff{P}^+ := \bigoplus_{i \in I} \ZZ_{\geq0} \Lambda_i$ and $\aff{Q}^+ := \sum_{i \in I} \ZZ_{\geq0} \alpha_i$ denote the positive root and weight cones respectively.
Let $\widehat{\Phi}^{\pm}$ denote the set of positive and negative roots of $\aff{\Phi}$.
For $i, j \in I$, we write $i \sim j$ if $A_{ij} < 0$.

We call a node $r \in I_0$ a \defn{special node} if it is in the orbit of the $0$ node (see Table~\ref{tab:dynkins}).
If $\aff{\g}$ is a untwisted (resp.\ dual untwisted) affine type, then we call $r$ a \defn{cominuscule node} (resp.~\defn{minuscule node}).
Note that if $\aff{\g}$ is simply-laced, then the cominuscule nodes and the minuscule nodes are the special nodes, and if $\aff{\g}$ is not simply-laced, then the minuscule and cominuscule nodes are distinct.
For any simple Lie algebra $\g$, we note that there is a unique dual untwisted affine Lie algebra $\aff{\g}$ whose classical Lie algebra is $\g$, which is given in Table~\ref{table:corresponding_affine}.

\begin{table}[t]
	\resizebox{\columnwidth}{!}{%
		\begin{tabular}{|c||c||c|c|}
			\hline
			type & Dynkin diagram & type & Dynkin diagram \\
			%& cominuscule ($\circ$) & minuscule ($\star$) \\
			\hline \hline
			$A_1^{(1)}$ & 
			\begin{minipage}[c][0.06\textheight]{0.5\linewidth}
			\setlength{\unitlength}{0.17in}
			\begin{picture}(16,3.7)
					\put(7,2.1){\makebox(0,0)[c]{$\circ$}}
					\put(9,2.1){\makebox(0,0)[c]{$\circ$}}
					\put(7.3,2.0){\line(1,0){1.3}}
					\put(7.3,2.2){\line(1,0){1.3}}
					\put(7.0,1.89){\Large $\prec$}
					\put(8.2,1.89){\Large $\succ$}
					\put(7,1.3){\makebox(0,0)[c]{\tiny ${\alpha}_0$}}
					\put(9,1.3){\makebox(0,0)[c]{\tiny ${\alpha}_1$}}
                    %
                    % Kac label
                    \put(7,2.6){\makebox(0,0)[c]{\tiny $1$}}
					\put(9,2.6){\makebox(0,0)[c]{\tiny $1$}}
				\end{picture}
			\end{minipage} &
                $A_2^{(2)}$ & 
                \begin{minipage}[c][0.06\textheight]{0.5\linewidth}
			\setlength{\unitlength}{0.17in}
			\begin{picture}(16,3.7)
					\put(7,2.1){\makebox(0,0)[c]{$\circ$}}
					\put(9,2.1){\makebox(0,0)[c]{$\circ$}}
                        \put(7.3,1.8){\line(1,0){1.3}}
					\put(7.3,2.0){\line(1,0){1.3}}
					\put(7.3,2.2){\line(1,0){1.3}}
                        \put(7.3,2.4){\line(1,0){1.3}}
					%
					%\put(7.0,1.69){\scalebox{2}{$\prec$}}
					\put(7.7,1.69){\scalebox{2}{$\succ$}}
					\put(7,1.3){\makebox(0,0)[c]{\tiny ${\alpha}_0$}}
					\put(9,1.3){\makebox(0,0)[c]{\tiny ${\alpha}_1$}}
				    %
                        % Kac labels
                        %\put(7,2.7){\makebox(0,0)[c]{\tiny $2$}}
                        %\put(9,2.7){\makebox(0,0)[c]{\tiny $1$}}
                        \put(7,2.7){\makebox(0,0)[c]{\tiny $1$}}
                        \put(9,2.7){\makebox(0,0)[c]{\tiny $2$}}
                    \end{picture}
			\end{minipage}
                \\ %$1$ & $1$  \\
			\hline
			$\underset{(n \ge 2)}{A_n^{(1)}}$ & 
			\begin{minipage}[c][0.09\textheight]{0.5\linewidth}
			\setlength{\unitlength}{0.17in}
				\begin{picture}(16,5.5)
					\put(3,2.1){\makebox(0,0)[c]{$\bullet$}}
					\put(5.6,2.1){\makebox(0,0)[c]{$\bullet$}}
					\put(10.5,2.1){\makebox(0,0)[c]{$\bullet$}}
					\put(13,2.1){\makebox(0,0)[c]{$\bullet$}}
					\put(8.1,4.1){\makebox(0,0)[c]{$\circ$}}
					\put(3.7,2.1){\line(1,0){1.3}}
					\put(6,2.1){\line(1,0){1.3}}
					\put(8.7,2.1){\line(1,0){1.3}}
					\put(11.2,2.1){\line(1,0){1.3}}
					\put(7.7,3.9){\line(-3,-1){4.3}}
					\put(8.5,3.9){\line(3,-1){4}}

					\put(8,2.05){\makebox(0,0)[c]{$\cdots$}}
					\put(8,3.2){\makebox(0,0)[c]{\tiny $\alpha_0$}}
					\put(3.2,1.1){\makebox(0,0)[c]{\tiny ${\alpha}_1$}}
					\put(5.6,1.1){\makebox(0,0)[c]{\tiny ${\alpha}_2$}}
					\put(10.4,1.1){\makebox(0,0)[c]{\tiny ${\alpha}_{n-1}$}}
					\put(13,1.1){\makebox(0,0)[c]{\tiny ${\alpha}_{n}$}}
					%
                    % Kac label
                    \put(3,2.6){\makebox(0,0)[c]{\tiny $1$}}
					\put(5.6,2.6){\makebox(0,0)[c]{\tiny $1$}}
					\put(10.5,2.6){\makebox(0,0)[c]{\tiny $1$}}
					\put(13,2.6){\makebox(0,0)[c]{\tiny $1$}}
					\put(8.1,4.6){\makebox(0,0)[c]{\tiny $1$}}
				\end{picture}
			\end{minipage} & $\underset{(n \ge 2)}{A_{2n}^{(2)}}$
            & \begin{minipage}[c][0.06\textheight]{0.5\linewidth}
			\setlength{\unitlength}{0.17in}
			\begin{picture}(16,3.7)
					\put(3,2.1){\makebox(0,0)[c]{$\circ$}}
					\put(5.6,2.1){\makebox(0,0)[c]{$\circ$}}
					\put(10.5,2.1){\makebox(0,0)[c]{$\circ$}}
					\put(13,2.1){\makebox(0,0)[c]{$\circ$}}
					\put(3.7,2){\line(1,0){1.3}}
					\put(3.7,2.2){\line(1,0){1.3}}
					\put(6,2.1){\line(1,0){1.3}}
					\put(8.7,2.1){\line(1,0){1.3}}
					\put(11.2,2){\line(1,0){1.3}}
					\put(11.2,2.2){\line(1,0){1.3}}
					%\put(11.5,1.88){\Large $\prec$}
                        \put(11.5,1.88){\Large $\succ$}
					%\put(4,1.88){\Large $\prec$}
                        \put(4,1.88){\Large $\succ$}

					\put(8,2.05){\makebox(0,0)[c]{$\cdots$}}
					\put(3.2,1.1){\makebox(0,0)[c]{\tiny ${\alpha}_0$}}
					\put(5.6,1.1){\makebox(0,0)[c]{\tiny ${\alpha}_1$}}
					\put(10.4,1.1){\makebox(0,0)[c]{\tiny ${\alpha}_{n-1}$}}
					\put(13,1.1){\makebox(0,0)[c]{\tiny ${\alpha}_{n}$}}
					%
					% Kac label (ge 2)
					\put(3,2.8){\makebox(0,0)[c]{\tiny $1$}}
                    \put(5.6,2.8){\makebox(0,0)[c]{\tiny $2$}}
					\put(10.4,2.8){\makebox(0,0)[c]{\tiny $2$}}
                    \put(13,2.8){\makebox(0,0)[c]{\tiny $2$}}
				\end{picture}
			\end{minipage}
            \\ %$1, 2, \dots, n$  &  $1, 2, \dots, n$ \\
			\hline
			$\underset{(n \ge 3)}{B_n^{(1)}}$ & 
			\begin{minipage}[c][0.10\textheight]{0.5\linewidth}
			\setlength{\unitlength}{0.17in}
				\begin{picture}(16,3.7)
					\put(2.8,0.7){\makebox(0,0)[c]{$\bullet$}}
					\put(2.8,3.3){\makebox(0,0)[c]{$\circ$}}
					\put(5.6,2.1){\makebox(0,0)[c]{$\circ$}}
					\put(10.5,2.1){\makebox(0,0)[c]{$\circ$}}
					\put(13,2.1){\makebox(0,0)[c]{$\circ$}}

					\put(5.3,1.8){\line(-2,-1){2.1}}
					\put(5.3,2.2){\line(-2,1){2.1}}
					\put(6,2.1){\line(1,0){1.3}}
					\put(8.7,2.1){\line(1,0){1.3}}
					\put(11.2,2){\line(1,0){1.3}}
					\put(11.2,2.2){\line(1,0){1.3}}
					\put(11.5,1.88){\Large $\succ$}

					\put(8,2.05){\makebox(0,0)[c]{$\cdots$}}
					\put(2.8,2.5){\makebox(0,0)[c]{\tiny ${\alpha}_0$}}
					\put(2.8,0){\makebox(0,0)[c]{\tiny ${\alpha}_1$}}
					\put(5.6,1.1){\makebox(0,0)[c]{\tiny ${\alpha}_2$}}
					\put(10.4,1.1){\makebox(0,0)[c]{\tiny ${\alpha}_{n-1}$}}
					\put(13,1.1){\makebox(0,0)[c]{\tiny ${\alpha}_{n}$}}
					%
					% Kac label
                    \put(2.8,1.4){\makebox(0,0)[c]{\tiny $1$}}
					\put(2.8,4){\makebox(0,0)[c]{\tiny $1$}}
					\put(5.6,2.8){\makebox(0,0)[c]{\tiny $2$}}
					\put(10.4,2.8){\makebox(0,0)[c]{\tiny $2$}}
					\put(13,2.8){\makebox(0,0)[c]{\tiny $2$}}
				\end{picture}
			\end{minipage} & 
			$\underset{(n \ge 3)}{A_{2n+1}^{(2)}}$ & 
			\begin{minipage}[c][0.10\textheight]{0.5\linewidth}
			\setlength{\unitlength}{0.17in}
				\begin{picture}(16,3.7)
					\put(2.8,0.7){\makebox(0,0)[c]{$\bullet$}}
					\put(2.8,3.3){\makebox(0,0)[c]{$\circ$}}
					\put(5.6,2.1){\makebox(0,0)[c]{$\circ$}}
					\put(10.5,2.1){\makebox(0,0)[c]{$\circ$}}
					\put(13,2.1){\makebox(0,0)[c]{$\circ$}}

					\put(5.3,1.8){\line(-2,-1){2.1}}
					\put(5.3,2.2){\line(-2,1){2.1}}
					\put(6,2.1){\line(1,0){1.3}}
					\put(8.7,2.1){\line(1,0){1.3}}
					\put(11.2,2){\line(1,0){1.3}}
					\put(11.2,2.2){\line(1,0){1.3}}
					\put(11.5,1.88){\Large $\prec$}

					\put(8,2.05){\makebox(0,0)[c]{$\cdots$}}
					\put(2.8,2.5){\makebox(0,0)[c]{\tiny ${\alpha}_0$}}
					\put(2.8,0){\makebox(0,0)[c]{\tiny ${\alpha}_1$}}
					\put(5.6,1.1){\makebox(0,0)[c]{\tiny ${\alpha}_2$}}
					\put(10.4,1.1){\makebox(0,0)[c]{\tiny ${\alpha}_{n-1}$}}
					\put(13,1.1){\makebox(0,0)[c]{\tiny ${\alpha}_{n}$}}
					%
					% Kac label
                    \put(2.8,1.4){\makebox(0,0)[c]{\tiny $1$}}
					\put(2.8,4){\makebox(0,0)[c]{\tiny $1$}}
					\put(5.6,2.8){\makebox(0,0)[c]{\tiny $2$}}
					\put(10.4,2.8){\makebox(0,0)[c]{\tiny $2$}}
					\put(13,2.8){\makebox(0,0)[c]{\tiny $1$}}
				\end{picture}
			\end{minipage}
			\\ %$1$ & $n$ \\
			\hline
			$\underset{(n \ge 2)}{C_n^{(1)}}$ & 
			\begin{minipage}[c][0.06\textheight]{0.5\linewidth}
			\setlength{\unitlength}{0.17in}
			\begin{picture}(16,3.7)
					\put(3,2.1){\makebox(0,0)[c]{$\circ$}}
					\put(5.6,2.1){\makebox(0,0)[c]{$\circ$}}
					\put(10.5,2.1){\makebox(0,0)[c]{$\circ$}}
					\put(13,2.1){\makebox(0,0)[c]{$\bullet$}}
					\put(3.7,2){\line(1,0){1.3}}
					\put(3.7,2.2){\line(1,0){1.3}}
					\put(6,2.1){\line(1,0){1.3}}
					\put(8.7,2.1){\line(1,0){1.3}}
					\put(11.2,2){\line(1,0){1.3}}
					\put(11.2,2.2){\line(1,0){1.3}}
					\put(11.5,1.88){\Large $\prec$}
					\put(4,1.88){\Large $\succ$}

					\put(8,2.05){\makebox(0,0)[c]{$\cdots$}}
					\put(3.2,1.1){\makebox(0,0)[c]{\tiny ${\alpha}_0$}}
					\put(5.6,1.1){\makebox(0,0)[c]{\tiny ${\alpha}_1$}}
					\put(10.4,1.1){\makebox(0,0)[c]{\tiny ${\alpha}_{n-1}$}}
					\put(13,1.1){\makebox(0,0)[c]{\tiny ${\alpha}_{n}$}}
					%
					% Kac label
                    \put(3,2.8){\makebox(0,0)[c]{\tiny $1$}}
					\put(5.6,2.8){\makebox(0,0)[c]{\tiny $2$}}
					\put(10.4,2.8){\makebox(0,0)[c]{\tiny $2$}}
                    \put(13,2.8){\makebox(0,0)[c]{\tiny $1$}}
				\end{picture}
			\end{minipage} & $\underset{(n \ge 2)}{D_{n+1}^{(2)}}$ &
			\begin{minipage}[c][0.06\textheight]{0.5\linewidth}
			\setlength{\unitlength}{0.17in}
			\begin{picture}(16,3.7)
					\put(3,2.1){\makebox(0,0)[c]{$\circ$}}
					\put(5.6,2.1){\makebox(0,0)[c]{$\circ$}}
					\put(10.5,2.1){\makebox(0,0)[c]{$\circ$}}
					\put(13,2.1){\makebox(0,0)[c]{$\bullet$}}
					\put(3.7,2){\line(1,0){1.3}}
					\put(3.7,2.2){\line(1,0){1.3}}
					\put(6,2.1){\line(1,0){1.3}}
					\put(8.7,2.1){\line(1,0){1.3}}
					\put(11.2,2){\line(1,0){1.3}}
					\put(11.2,2.2){\line(1,0){1.3}}
					\put(11.5,1.88){\Large $\succ$}
					\put(4,1.88){\Large $\prec$}

					\put(8,2.05){\makebox(0,0)[c]{$\cdots$}}
					\put(3.2,1.1){\makebox(0,0)[c]{\tiny ${\alpha}_0$}}
					\put(5.6,1.1){\makebox(0,0)[c]{\tiny ${\alpha}_1$}}
					\put(10.4,1.1){\makebox(0,0)[c]{\tiny ${\alpha}_{n-1}$}}
					\put(13,1.1){\makebox(0,0)[c]{\tiny ${\alpha}_{n}$}}
					%
					% Kac label
					\put(3,2.8){\makebox(0,0)[c]{\tiny $1$}}
					\put(5.6,2.8){\makebox(0,0)[c]{\tiny $1$}}
					\put(10.5,2.8){\makebox(0,0)[c]{\tiny $1$}}
					\put(13,2.8){\makebox(0,0)[c]{\tiny $1$}}
				\end{picture}
			\end{minipage} \\ %$n$ & $1$  \\
			\hline
			$\underset{(n \ge 4)}{D_n^{(1)}}$ &
			\begin{minipage}[c][0.11\textheight]{0.5\linewidth}
				\setlength{\unitlength}{0.17in}
				\begin{picture}(16,3.7)
					\put(2.8,0.7){\makebox(0,0)[c]{$\bullet$}}
					\put(2.8,3.3){\makebox(0,0)[c]{$\circ$}}
					\put(5.6,2){\makebox(0,0)[c]{$\circ$}}
					\put(10.4,2){\makebox(0,0)[c]{$\circ$}}
					\put(13.1,3.3){\makebox(0,0)[c]{$\bullet$}}
					\put(13.1,0.7){\makebox(0,0)[c]{$\bullet$}}
					\put(5.3,1.8){\line(-2,-1){2.1}}
					\put(5.3,2.2){\line(-2,1){2.1}}
					\put(6,2){\line(1,0){1.3}}
					\put(8.7,2){\line(1,0){1.3}}
					\put(10.7,2.2){\line(2,1){2}}
					\put(10.7,1.8){\line(2,-1){2}}

					\put(8,1.95){\makebox(0,0)[c]{$\cdots$}}
					\put(2.8,2.5){\makebox(0,0)[c]{\tiny ${\alpha}_0$}}
					\put(2.8,0.0){\makebox(0,0)[c]{\tiny ${\alpha}_1$}}
					\put(5.6,1){\makebox(0,0)[c]{\tiny ${\alpha}_2$}}
					\put(10.4,1){\makebox(0,0)[c]{\tiny ${\alpha}_{n-2}$}}
					\put(13.1,2.5){\makebox(0,0)[c]{\tiny ${\alpha}_{n-1}$}}
					\put(13.1,0.0){\makebox(0,0)[c]{\tiny ${\alpha}_{n}$}}
					%
					% Kac label (ge 2)
                    \put(2.8,1.4){\makebox(0,0)[c]{\tiny $1$}}
					\put(2.8,4){\makebox(0,0)[c]{\tiny $1$}}
					\put(5.6,2.7){\makebox(0,0)[c]{\tiny $2$}}
					\put(10.4,2.7){\makebox(0,0)[c]{\tiny $2$}}
                    \put(13.1,4){\makebox(0,0)[c]{\tiny $1$}}
					\put(13.1,1.4){\makebox(0,0)[c]{\tiny $1$}}
				\end{picture}
			\end{minipage} & $E_6^{(1)}$ & 
			\begin{minipage}[c][0.15\textheight]{0.5\linewidth}
			\setlength{\unitlength}{0.17in}
				\begin{picture}(16,8.5)
					\put(3,2.1){\makebox(0,0)[c]{$\bullet$}}
					\put(5.6,2.1){\makebox(0,0)[c]{$\circ$}}
					\put(8,2.1){\makebox(0,0)[c]{$\circ$}}
					\put(10.5,2.1){\makebox(0,0)[c]{$\circ$}}
					\put(13, 2.1){\makebox(0,0)[c]{$\bullet$}}
					\put(8,4.5){\makebox(0,0)[c]{$\circ$}}
					\put(8,7){\makebox(0,0)[c]{$\circ$}}

					\put(8, 4){\line(0, -3){1.3}}
					\put(8, 6.5){\line(0, -3){1.3}}
					\put(3.7,2.1){\line(1,0){1.3}}
					\put(6,2.1){\line(1,0){1.3}}
					\put(8.7,2.1){\line(1,0){1.3}}
					\put(11.2,2.1){\line(1,0){1.3}}
					\put(8.7,7){\makebox(0,0)[c]{\tiny $\alpha_0$}}
					\put(3.2,1.1){\makebox(0,0)[c]{\tiny ${\alpha}_1$}}
					\put(8.7,4.5){\makebox(0,0)[c]{\tiny $\alpha_2$}}
					\put(5.6,1.1){\makebox(0,0)[c]{\tiny ${\alpha}_3$}}
					\put(8,1.1){\makebox(0,0)[c]{\tiny $\alpha_4$}}
					\put(10.4,1.1){\makebox(0,0)[c]{\tiny ${\alpha}_5$}}
					\put(13,1.1){\makebox(0,0)[c]{\tiny ${\alpha}_6$}}
					%
					%Kac labels
                    \put(3,2.7){\makebox(0,0)[c]{\tiny $1$}}
					\put(7.4,4.5){\makebox(0,0)[c]{\tiny $2$}}
					\put(5.6,2.7){\makebox(0,0)[c]{\tiny $2$}}
					\put(8.5,2.7){\makebox(0,0)[c]{\tiny $3$}}
					\put(10.4,2.7){\makebox(0,0)[c]{\tiny $2$}}
                    \put(7.4,7){\makebox(0,0)[c]{\tiny $1$}}
                    \put(13, 2.7){\makebox(0,0)[c]{\tiny $1$}}
				\end{picture}
			\end{minipage} \\ %$1,\,6$ & $1,\,6$ \\  \\ %$1,\,n-1,\,n$ & $1,\,n-1,\,n$ \\
			\hline
			$E_7^{(1)}$ & 
			\begin{minipage}[c][0.10\textheight]{0.5\linewidth}
			\setlength{\unitlength}{0.17in}
				\begin{picture}(16,5.5)
					\put(0.7,2.1){\makebox(0,0)[c]{$\circ$}}
					\put(3,2.1){\makebox(0,0)[c]{$\circ$}}
					\put(5.6,2.1){\makebox(0,0)[c]{$\circ$}}
					\put(8,2.1){\makebox(0,0)[c]{$\circ$}}
					\put(10.5,2.1){\makebox(0,0)[c]{$\circ$}}
					\put(13, 2.1){\makebox(0,0)[c]{$\circ$}}
					\put(15.5,2.1){\makebox(0,0)[c]{$\bullet$}}
					\put(8,4.5){\makebox(0,0)[c]{$\circ$}}
					\put(8, 4){\line(0, -3){1.3}}
					\put(1.2,2.1){\line(1,0){1.3}}
					\put(3.7,2.1){\line(1,0){1.3}}
					\put(6,2.1){\line(1,0){1.3}}
					\put(8.7,2.1){\line(1,0){1.3}}
					\put(11.2,2.1){\line(1,0){1.3}}
					\put(13.7,2.1){\line(1,0){1.3}}
					\put(0.8,1.1){\makebox(0,0)[c]{\tiny ${\alpha}_0$}}
					\put(3.2,1.1){\makebox(0,0)[c]{\tiny ${\alpha}_1$}}
					\put(8.7,4.5){\makebox(0,0)[c]{\tiny $\alpha_2$}}
					\put(5.6,1.1){\makebox(0,0)[c]{\tiny ${\alpha}_3$}}
					\put(8,1.1){\makebox(0,0)[c]{\tiny $\alpha_4$}}
					\put(10.4,1.1){\makebox(0,0)[c]{\tiny ${\alpha}_5$}}
					\put(13,1.1){\makebox(0,0)[c]{\tiny ${\alpha}_6$}}
					\put(15.5, 1.1){\makebox(0,0)[c]{\tiny ${\alpha}_7$}}
					%
					%Kac labels
                    \put(0.7,2.7){\makebox(0,0)[c]{\tiny $1$}}
					\put(3.2,2.7){\makebox(0,0)[c]{\tiny $2$}}
					\put(7.5,4.5){\makebox(0,0)[c]{\tiny $2$}}
					\put(5.6,2.7){\makebox(0,0)[c]{\tiny $3$}}
					\put(8.5,2.7){\makebox(0,0)[c]{\tiny $4$}}
					\put(10.4,2.7){\makebox(0,0)[c]{\tiny $3$}}
					\put(13,2.7){\makebox(0,0)[c]{\tiny $2$}}
                    \put(15.5,2.7){\makebox(0,0)[c]{\tiny $1$}}
				\end{picture}
			\end{minipage} & $E_8^{(1)}$ & 
			\begin{minipage}[c][0.10\textheight]{0.5\linewidth}
			\setlength{\unitlength}{0.15in}
				\begin{picture}(16,5.5)
					\put(0.7,2.1){\makebox(0,0)[c]{$\circ$}}
					\put(3,2.1){\makebox(0,0)[c]{$\circ$}}
					\put(5.6,2.1){\makebox(0,0)[c]{$\circ$}}
					\put(8,2.1){\makebox(0,0)[c]{$\circ$}}
					\put(10.5,2.1){\makebox(0,0)[c]{$\circ$}}
					\put(13, 2.1){\makebox(0,0)[c]{$\circ$}}
					\put(15.5,2.1){\makebox(0,0)[c]{$\circ$}}
					\put(18,2.1){\makebox(0,0)[c]{$\circ$}}
					\put(13,4.5){\makebox(0,0)[c]{$\circ$}}
					\put(13,4){\line(0, -3){1.3}}
					\put(1.2,2.1){\line(1,0){1.3}}
					\put(3.7,2.1){\line(1,0){1.3}}
					\put(6,2.1){\line(1,0){1.3}}
					\put(8.7,2.1){\line(1,0){1.3}}
					\put(11.2,2.1){\line(1,0){1.3}}
					\put(13.7,2.1){\line(1,0){1.3}}
					\put(16.2,2.1){\line(1,0){1.3}}
					\put(0.8,1.1){\makebox(0,0)[c]{\tiny ${\alpha}_0$}}
					\put(3.2,1.1){\makebox(0,0)[c]{\tiny ${\alpha}_1$}}
					\put(13.8,4.5){\makebox(0,0)[c]{\tiny $\alpha_8$}}
					\put(5.6,1.1){\makebox(0,0)[c]{\tiny ${\alpha}_2$}}
					\put(8,1.1){\makebox(0,0)[c]{\tiny $\alpha_3$}}
					\put(10.4,1.1){\makebox(0,0)[c]{\tiny ${\alpha}_4$}}
					\put(13,1.1){\makebox(0,0)[c]{\tiny ${\alpha}_5$}}
					\put(15.5, 1.1){\makebox(0,0)[c]{\tiny ${\alpha}_6$}}
					\put(18, 1.1){\makebox(0,0)[c]{\tiny ${\alpha}_7$}}
					%
					% Kac labels
					\put(0.8,2.7){\makebox(0,0)[c]{\tiny $1$}}
					\put(3.2,2.7){\makebox(0,0)[c]{\tiny $2$}}
                    \put(5.6,2.7){\makebox(0,0)[c]{\tiny $3$}}
                    \put(8,2.7){\makebox(0,0)[c]{\tiny $4$}}
                    \put(10.4,2.7){\makebox(0,0)[c]{\tiny $5$}}
                    \put(13.4,2.7){\makebox(0,0)[c]{\tiny $6$}}
					\put(12.4,4.5){\makebox(0,0)[c]{\tiny $3$}}
					\put(15.5,2.7){\makebox(0,0)[c]{\tiny $4$}}
                    \put(18,2.7){\makebox(0,0)[c]{\tiny $2$}}
				\end{picture}
			\end{minipage}  \\ %$7$ & $7$ \\
			\hline
			$F_4^{(1)}$ & 
			\begin{minipage}[c][0.06\textheight]{0.5\linewidth}
			\setlength{\unitlength}{0.17in}
			\begin{picture}(16,3.9)
					\put(4,2.1){\makebox(0,0)[c]{$\circ$}}
					\put(6,2.1){\makebox(0,0)[c]{$\circ$}}
					\put(8,2.1){\makebox(0,0)[c]{$\circ$}}
					\put(10,2.1){\makebox(0,0)[c]{$\circ$}}
					\put(12,2.1){\makebox(0,0)[c]{$\circ$}}
					\put(4.3,2.1){\line(1,0){1.3}}
					\put(6.3,2.1){\line(1,0){1.3}}
					\put(8.3,2.0){\line(1,0){1.3}}
					\put(8.3,2.2){\line(1,0){1.3}}
					\put(10.3,2.1){\line(1,0){1.3}}
					\put(8.6,1.88){\Large $\succ$}
					\put(4,1.3){\makebox(0,0)[c]{\tiny ${\alpha}_0$}}
					\put(6,1.3){\makebox(0,0)[c]{\tiny ${\alpha}_1$}}
					\put(8,1.3){\makebox(0,0)[c]{\tiny ${\alpha}_2$}}
					\put(10,1.3){\makebox(0,0)[c]{\tiny ${\alpha}_3$}}
					\put(12,1.3){\makebox(0,0)[c]{\tiny ${\alpha}_4$}}
					%
					% Kac labels
					\put(4,2.7){\makebox(0,0)[c]{\tiny $1$}}
                    \put(6,2.7){\makebox(0,0)[c]{\tiny $2$}}
					\put(8,2.7){\makebox(0,0)[c]{\tiny $3$}}
					\put(10,2.7){\makebox(0,0)[c]{\tiny $4$}}
					\put(12,2.7){\makebox(0,0)[c]{\tiny $2$}}
				\end{picture}
			\end{minipage} & $E_6^{(2)}$ &
			\begin{minipage}[c][0.06\textheight]{0.5\linewidth}
			\setlength{\unitlength}{0.17in}
			\begin{picture}(16,3.9)
					\put(4,2.1){\makebox(0,0)[c]{$\circ$}}
					\put(6,2.1){\makebox(0,0)[c]{$\circ$}}
					\put(8,2.1){\makebox(0,0)[c]{$\circ$}}
					\put(10,2.1){\makebox(0,0)[c]{$\circ$}}
					\put(12,2.1){\makebox(0,0)[c]{$\circ$}}
					\put(4.3,2.1){\line(1,0){1.3}}
					\put(6.3,2.1){\line(1,0){1.3}}
					\put(8.3,2.0){\line(1,0){1.3}}
					\put(8.3,2.2){\line(1,0){1.3}}
					\put(10.3,2.1){\line(1,0){1.3}}
					\put(8.6,1.88){\Large $\prec$}
					\put(4,1.3){\makebox(0,0)[c]{\tiny ${\alpha}_0$}}
					\put(6,1.3){\makebox(0,0)[c]{\tiny ${\alpha}_1$}}
					\put(8,1.3){\makebox(0,0)[c]{\tiny ${\alpha}_2$}}
					\put(10,1.3){\makebox(0,0)[c]{\tiny ${\alpha}_3$}}
					\put(12,1.3){\makebox(0,0)[c]{\tiny ${\alpha}_4$}}
					%
					% Kac labels
                    \put(4,2.7){\makebox(0,0)[c]{\tiny $1$}}
					\put(6,2.7){\makebox(0,0)[c]{\tiny $2$}}
					\put(8,2.7){\makebox(0,0)[c]{\tiny $3$}}
					\put(10,2.7){\makebox(0,0)[c]{\tiny $2$}}
					\put(12,2.7){\makebox(0,0)[c]{\tiny $1$}}
				\end{picture}
			\end{minipage}
			 \\ % {\rm none} & {\rm none}  \\
			\hline
			$G_2^{(1)}$ & 
			\begin{minipage}[c][0.06\textheight]{0.5\linewidth}
			\setlength{\unitlength}{0.17in}
			\begin{picture}(16,3.7)
					\put(6,2.1){\makebox(0,0)[c]{$\circ$}}
					\put(8,2.1){\makebox(0,0)[c]{$\circ$}}
					\put(10,2.1){\makebox(0,0)[c]{$\circ$}}
					\put(6.3,2.1){\line(1,0){1.3}}
					\put(8.3,1.9){\line(1,0){1.3}}
					\put(8.3,2.1){\line(1,0){1.3}}
					\put(8.3,2.3){\line(1,0){1.3}}
					\put(8.5,1.84){\LARGE $\succ$}
					\put(6,1.3){\makebox(0,0)[c]{\tiny ${\alpha}_0$}}
					\put(8,1.3){\makebox(0,0)[c]{\tiny ${\alpha}_1$}}
					\put(10,1.3){\makebox(0,0)[c]{\tiny ${\alpha}_2$}}
					%
					% Kac labels
                    \put(6,2.7){\makebox(0,0)[c]{\tiny $1$}}
					\put(8,2.7){\makebox(0,0)[c]{\tiny $2$}}
					\put(10,2.7){\makebox(0,0)[c]{\tiny $3$}}
				\end{picture}
			\end{minipage} & $D_4^{(3)}$ & 
			\begin{minipage}[c][0.06\textheight]{0.5\linewidth}
			\setlength{\unitlength}{0.17in}
			\begin{picture}(16,3.7)
					\put(6,2.1){\makebox(0,0)[c]{$\circ$}}
					\put(8,2.1){\makebox(0,0)[c]{$\circ$}}
					\put(10,2.1){\makebox(0,0)[c]{$\circ$}}
					\put(6.3,2.1){\line(1,0){1.3}}
					\put(8.3,1.9){\line(1,0){1.3}}
					\put(8.3,2.1){\line(1,0){1.3}}
					\put(8.3,2.3){\line(1,0){1.3}}
					\put(8.5,1.84){\LARGE $\prec$}
					\put(6,1.3){\makebox(0,0)[c]{\tiny ${\alpha}_0$}}
					\put(8,1.3){\makebox(0,0)[c]{\tiny ${\alpha}_1$}}
					\put(10,1.3){\makebox(0,0)[c]{\tiny ${\alpha}_2$}}
					%
					% Kac labels
                    \put(6,2.7){\makebox(0,0)[c]{\tiny $1$}}
					\put(8,2.7){\makebox(0,0)[c]{\tiny $2$}}
					\put(10,2.7){\makebox(0,0)[c]{\tiny $1$}}
				\end{picture}
			\end{minipage}
			\\ % {\rm none} & {\rm none}  \\
			\hline
		\end{tabular}
	}
	\caption{The affine Dynkin diagrams with special nodes marked as $\bullet$.}
	\label{tab:dynkins}
\end{table}

For any $J \subseteq I$, we define the Levi subalgebra $\aff{\g}_J$ as being the subalgebra whose Dynkin diagram is induced from that of $\aff{\g}$ with index set $J$.
Note that $\aff{\g}_J$ is a finite-dimensional semisimple Lie algebra for any $J \neq I$.
Let $\aff{\Phi}_J$ denote the corresponding root system with simple roots $\aff{\Delta}_J := \{\alpha_i \mid i \in J \}$.
For $J \neq I$, let $w_J$ denote the longest element of the corresponding Weyl group $\aff{W}_J$.
For $w \in \aff{W}$, the \defn{inversion set} is $\Inv(w) = \{\beta \in \Phi^+ \mid w \beta \in \Phi^-\}$, and the \defn{length} of $w$ is $\ell(w) := \abs{\Inv(w)}$.
Let $R(w) := \{(i_1, \dotsc, i_{\ell(w)}) \in I^{\ell(w)} \mid s_{i_1} \cdots s_{i_{\ell(w)}} = w \}$ denote the set of \defn{reduced expressions} for $w$ (see, \textit{e.g.},~\cite{Hum90} for more information).

When $J = I_0 := I \setminus \{0\}$, we call the corresponding Lie algebra the \defn{classical Lie algebra} and denote it by $\g := \aff{\g}_{I_0}$.
Let $\Phi$, $P$, $Q$, and $W$ denote the corresponding root system, weight lattice, root lattice, and Weyl group, respectively.
The corresponding fundamental weights will be the \defn{classical fundamental weights} $\{\clfw_i\}_{i \in I_0}$.
Let $\theta$ (resp.~$\theta_s$) denote the highest (resp.~short) root of $\Phi$.
Define
\begin{equation} \label{eq: def of Theta}
\Theta = \begin{cases}
\theta & \text{if $\aff{\g}$ is of untwisted type,} \\
\theta_s & \text{otherwise.}
\end{cases}
\end{equation}
Let $w_0$ denote the longest element of the corresponding Weyl group $W$.
For $J \subseteq I_0$, we set $\g_J := \aff{\g}_J$, $W_J := \aff{W}_J$, etc.

\begin{rem}
From this point forward, we will always denote the corresponding dual untwisted affine Lie algebra to $\g$ by $\aff{\g}$, and we will denote the corresponding untwisted affine Lie algebra by $\gutw$.
\end{rem}

\begin{table}[t]
	\[
	\begin{array}{cccccccc}
	\toprule
	\g & A_n & B_n & C_n & D_n & E_n & F_4 & G_2
	\\\midrule
	\aff{\g} & A_n^{(1)} & D_{n+1}^{(2)} & A_{2n+1}^{(2)} & D_n^{(1)} & E_n^{(1)} & E_6^{(2)} & D_4^{(3)}
	\\\bottomrule
	\end{array}
	\]
	\caption{The finite types and their corresponding dual untwisted type.}
	\label{table:corresponding_affine}
\end{table}

There are only two nodes that are cominuscule but not minuscule: $r = 1$ in type $B_n$ and $r = n$ in type $C_n$.
Indeed, these come from the orbit of the $0$ node in the corresponding \emph{un}twisted affine types $B_n^{(1)}$ and $C_n^{(1)}$, respectively.
Likewise, the minuscule node in type $B_n$ and type $C_n$ are not cominuscule.

Next, we consider $J = I_0 \setminus \{r\}$ for some $r \in I_0$.
The stabilizer of $\clfw_r$ is given by the parabolic subgroup $W_J$.
Let $W^J$ denote the set of minimal length coset representatives of $W / W_J$. 
Take the maximal element in $W / W_J$, which is unique (see, \textit{e.g.},~\cite{Hum90}), and 
we denote $w^J := \cosetrepr{w_0}$.
Note that the inversion set $\Inv(w^J) = (\Phi^J)^+ := \Phi^+ \setminus \Phi_J$ since $w_0 = w^J w_J$, where $w_J$ is the longest element of $W_J$.

\begin{rem} \label{rem: w^J}
When $r$ is a minuscule node, any minimal length coset representative $\cosetrepr{w} \in W^J$ is \defn{fully-commutative}~\cite[Prop.~2.1]{Stembridge01II} (see also~\cite{Littelmann98}), which means the reduced expression is unique up to commutation relations (also known as $2$-term braid moves).
Furthermore, we can write $w^J = s_{i_1} \cdots s_{i_\ell}$ such that $s_{i_{\ell}} = s_r$ and $s_{i_1} = s_{\tau^{-1}(0)}$.
Here $\tau$ is an affine Dynkin diagram automorphism obtained from a translation by fundamental $r$-th coweight.
%In~\cite{JK19, Jang22}, the authors give explicit reduced expressions for $w^J$ and $w_J$, which are unique up to $2$-braid moves in the sense of \cite[Rem.~5.2]{JK21}.
%By using \cite[Thm.~4.5]{SST18} with the reduced expression of $w^J$ (see, \textit{e.g.},~\cite[Prop.~3.2]{Jang22}), one can describe the crystal operators explicitly.
%For example, see~\cite[Prop.~3.2]{JK19} for type $D_n^{(1)}$ and \cite[Prop.~4.3]{Jang22} for type $E_n^{(1)}$.
%Note that the reduced expressions of $w_0 = w^J w_J$ are not necessarily equal to the ones in~\cite{SST18} (\textit{cf.}~\cite[Rem.~5.4]{JK21}; see also~\eqref{eq:rex for KR in types BC}).
\end{rem}

When $\g$ has at least one minuscule node, we can order $I_0 = (i_1, \dotsc, i_n)$ such that $i_k$ is a minuscule node of the subdiagram with nodes $I_{(k)} := (i_k, \dotsc, i_n)$ (\textit{i.e.}, for the corresponding Levi subalgebra $\g_{I_{(k)}}$). 
Subsequently, we can use this to form the reduced decompositions that are the ``nice decompositions'' given by Littelmann~\cite{Littelmann98} by
\begin{equation}
\label{eq:nice_decomp}
w_0 = w^{(1)} w^{(2)} \dotsm w^{(n)},
\end{equation}
where $w^{(k)}$ is any reduced expression for the minimal length coset representative of the longest element of $W_{I_{(k)}}$ modulo $W_{I_{(k+1)}}$, where $W_{I_{(n+1)}} = \mathbf{1}$ is the trivial group (for example, $w^{(1)} = \cosetrepr{w_0}$).

\begin{ex}
\label{ex:nice_decomposition}
Let $\g$ be of type $E_7$.
Then $w_0 = w^{(1)} w^{(2)} w^{(3)} w^{(4)} w^{(5)} w^{(6)} w^{(7)}$ with
\begin{align*}
w^{(1)} & = s_7 s_6 s_5 s_4 s_3 s_2 s_4 s_5 s_6 s_7 s_1 s_3 s_4 s_5 s_6 s_2 s_4 s_5 s_3 s_4 s_1 s_3 s_2 s_4 s_5 s_6 s_7,
\\ w^{(2)} & = s_1 s_3 s_4 s_5 s_6 s_2 s_4 s_5 s_3 s_4 s_1 s_3 s_2 s_4 s_5 s_6,
\\ w^{(3)} & = s_2 s_4 s_5 s_3 s_4 s_2 s_1 s_3 s_4 s_5,
\\ w^{(4)} & = s_1 s_3 s_4 s_2,
\qquad w^{(5)} = s_1 s_3 s_4,
\qquad w^{(6)} = s_1 s_3,
\qquad w^{(7)} = s_1.
\end{align*}
This corresponds to ordering the indices $I_0 = (7, 6, 5, 2, 4, 3, 1)$ and $w^{(k)} \dotsm w^{(7)}$ is a nice decomposition of the longest element for the groups (of type)
\[
(W_{I_{(k)}})_{k=1}^7 = (E_7, E_6, D_5, A_4, A_3, A_2, A_1)
\qquad
\begin{tikzpicture}[baseline=.4cm]
\foreach \i in {1,2,3}
  \node (\i) at (\i,0) {$\i$};
\foreach \i in {5,6,7}
  \node (\i) at (\i-1,0) {$\i$};
\node (4) at (3,1) {$4$};
\draw[-] (1) -- (2) -- (3) -- (5) -- (6) -- (7);
\draw[-] (3) -- (4);
\end{tikzpicture}
\]
(with some nodes relabeled from Table~\ref{tab:dynkins}), where the nodes of the Dynkin diagram are labeled according to the order they are removed.
\end{ex}

Define an automorphism of the Dynkin diagram by $\tau_r = t_{\clfw_r^{\vee}} (w^J)^{-1}$ on the simple roots (see, \textit{e.g.},~\cite{Bourbaki02}), where $t_{\clfw_r^{\vee}}$ is the translation by $\clfw_r^{\vee}$ in the extended affine Weyl group $W \ltimes P \iso \widehat{W} \ltimes \Pi$ with $\Pi$ being a particular set of Dynkin diagram automorphisms (\textit{cf.}~\cite{kac90}).
In particular, we have $\tau_r(0) = r$.

We will also require the following fact.
We believe it is well-known to experts (\textit{cf.}~\cite[Section 3.2]{JKP25} and references therein for more details), but the authors could not find an explicit reference in the literature.

\begin{prop} \label{prop:twisted_roots}
Let $\aff{\g}$ be an affine Lie algebra, and choose a special node $r$.
Let $J = I_0 \setminus \{r\}$, and recall $w^J = \cosetrepr{w_0} \in W / W_J$.
Then for all $i \in I_0$ we have
\[
w^J \left(  \alpha_i \right) = \begin{cases} \alpha_{\tau_r^{-1}(i)} & \text{if } i \neq r, \\ -\Theta & \text{if } i = r. \end{cases}
\]
\end{prop}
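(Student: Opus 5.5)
Recall $\tau_r = t_{\clfw_r^{\vee}} (w^J)^{-1}$, acting on the affine simple roots. The plan is to compute $\tau_r^{-1} = w^J t_{-\clfw_r^{\vee}}$ directly on $\alpha_j$ for $j \in I$, and then read off $w^J(\alpha_i)$ by inverting.

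First I fix the translation action in the extended affine Weyl group. For $\beta \in \Phi$ (viewed as a classical root) and level-zero affine roots $\beta + m\delta$, we have
\[
t_{\lambda}(\beta + m\delta) = \beta + (m - \inner{\lambda}{\beta})\delta
\]
(up to the sign convention; in twisted/dual untwisted types one should use $\inner{\beta}{\lambda}$ with the appropriate normalization of $\delta$). For $i \in I_0$ we have $\inner{\clfw_r^{\vee}}{\alpha_i} = \delta_{ir}$, so $t_{\pm\clfw_r^{\vee}}$ fixes $\alpha_i$ for $i \neq r$ and sends $\alpha_r \mapsto \alpha_r \mp \delta$. Also $\alpha_0 = \delta - \Theta$ in all cases considered: $a_0 = 1$, and $\Theta$ is $\theta$ in untwisted type and $\theta_s$ in dual untwisted type. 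Since $r$ is special, the coefficient of $\alpha_r$ in $\Theta$ is $1$. This is exactly where specialness enters, because it is the (co)minuscule condition in the appropriate duality. Hence $t_{\clfw_r^{\vee}}(\alpha_0) = \delta - \Theta + \delta = 2\delta - \Theta$ or $-\Theta$, depending on the sign convention. I would fix the convention so that $\tau_r$ genuinely permutes the simple roots; the consistency check is that $\tau_r(\alpha_0)$ must be a simple root, namely $\alpha_r$.

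Next, for $i \in I_0 \setminus \{r\} = J$, the equality $\Inv(w^J) = (\Phi^J)^+$ together with $w_0 = w^J w_J$ gives $w^J(\Phi_J^+) \subseteq \Phi^+$. I claim that $w^J$ maps $\Delta_J$ bijectively onto $\Delta \setminus \{\alpha_{r^*}\}$, where $r^* = -w_0(r)$. Indeed, $w^J = w_0 w_J$, and $w_J$ permutes $-\Delta_J$, while $-w_0$ permutes $\Delta$, so $w^J(\alpha_i) = w_0(-\alpha_{-w_J(i)})$ is simple. Likewise $w^J(\alpha_r) = w_0 w_J(\alpha_r)$. Here $w_J(\alpha_r)$ is the highest root among the roots with $\alpha_r$-coefficient $1$. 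For special $r$ these are all the roots of $\Phi^J$ with positive coefficient, so $w_J(\alpha_r)$ is the highest root of the relevant length, namely $\Theta$ (using that $\alpha_r$ has the same length as $\Theta$; I would verify this via the dual-type convention). Then $w^J(\alpha_r) = w_0(\Theta) = -\Theta$. This settles the case $i = r$.

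Finally, for $i \neq r$, I need the labeling: $w^J(\alpha_i) = \alpha_{\tau_r^{-1}(i)}$. Using $\tau_r^{-1} = w^J t_{-\clfw_r^{\vee}}$ and $t$ fixing $\alpha_i$, we get $\tau_r^{-1}(\alpha_i) = w^J(\alpha_i)$ directly, for every $i \in J$. The statement then reduces to the definitional fact that $\tau_r$ acts on $\{\alpha_j\}$ via the index permutation.

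The main obstacle is the sign/normalization bookkeeping: ensuring that the translation convention makes $t_{\clfw_r^{\vee}}(w^J)^{-1}$ an honest diagram automorphism, and that $\alpha_0 = \delta - \Theta$ together with the length matching of $\alpha_r$ and $\Theta$ holds uniformly across untwisted and dual untwisted types. I would check these against the Kac labels in Table~\ref{tab:dynkins} rather than case by case.
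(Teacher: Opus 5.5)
Your proof is correct. For $i \in J$ it is the paper's argument: both take from Bourbaki that $\tau_r$ is a diagram automorphism with $\tau_r(0)=r$. Both also use that the translation fixes $\alpha_i$ for $i \neq r$, and this needs no sign or normalization convention, since $\inner{\clfw_r^{\vee}}{\alpha_i}=0$.

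Your bookkeeping through $\tau_r^{-1}=w^J t_{-\clfw_r^{\vee}}$ is the cleaner version. The paper's write-up states its positivity claim and its final identity in terms of $(w^J)^{-1}$, where they do not literally hold. For example, in type $A_n$ with $n\geq 2$ and $r=1$, one has $(w^J)^{-1}(\alpha_n)=-\theta$ although $n\in J$, and $(w^J)^{-1}(\alpha_1)=\alpha_2$.

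The real difference is the case $i=r$. The paper gets it on the affine side from $\tau_r^{-1}(\alpha_r)=\alpha_0=\delta-\Theta$ and then projects modulo $\delta$. It treats dual untwisted types by swapping long and short roots. This projection makes the sign and normalization issues you single out as the main obstacle irrelevant: $t_{\pm\clfw_r^{\vee}}$ only changes $\delta$-components, and $w^J(\alpha_r)$ is a classical root.

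You instead stay in finite type with $w^J(\alpha_r)=w_0w_J(\alpha_r)=w_0(\Theta)$. That route is convention-free and uniform in $\Theta$. Combined with your observation $w^J(\Delta_J)=\Delta\setminus\{\alpha_{r^*}\}$, it also shows $\tau_r^{-1}(0)=r^*$. Its cost is the extra input $w_J(\alpha_r)=\Theta$.

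That input is where your sketch needs repair. Your claim that, for special $r$, the roots with $\alpha_r$-coefficient $1$ are all of $(\Phi^J)^+$ is false when $r$ is minuscule but not cominuscule:
in type $C_n$ with $r=1$, the root $2\epsilon_1$ has $\alpha_1$-coefficient $2$;
in type $B_n$ with $r=n$, the root $\epsilon_1+\epsilon_2$ has $\alpha_n$-coefficient $2$.
Also, that $w_J(\alpha_r)$ is the top of the coefficient-one roots is only asserted.

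The conclusion is still right, and it follows from exactly what specialness gives. First, $\Theta$ has $\alpha_r$-coefficient $a_r=1$. Second, $(\alpha_r,\alpha_r)=(\alpha_0,\alpha_0)=(\Theta,\Theta)$, because diagram automorphisms preserve the invariant form and $\delta$ is isotropic and orthogonal to all roots. Now set $\mu:=w_J(\Theta)$. It is a positive root of the form $\alpha_r+\sum_{j\in J}c_j\alpha_j$ with all $c_j\geq 0$, and it is $J$-antidominant, so $(\mu,\mu-\alpha_r)\leq 0$. Cauchy--Schwarz with $(\mu,\mu)=(\alpha_r,\alpha_r)$ then forces $\mu=\alpha_r$, that is, $w_J(\alpha_r)=\Theta$. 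Alternatively, skip this and read off the case $i=r$ from $\tau_r^{-1}(\alpha_r)=\alpha_0$, as the paper does.
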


\begin{proof}
We first prove this when $\aff{\g}$ is an untwisted affine Lie algebra.
Since $w^J$ is a minimal length coset representative, we have the $(w^J)^{-1} \left( \alpha_i \right) \in \Phi^+$ for all $i \in J$ as otherwise $\ell(w^J s_i) < \ell(w^J)$.

Thus
\[
t_{\clfw_r^{\vee}} (w^J)^{-1} \left( \Delta \right) = \tau_r \left( \Delta \right) \subseteq \Delta \cup \{-\theta + \delta\},
\]
where $\Delta = \aff{\Delta}_{I_0}$.
Hence, we have
\[
(w^J)^{-1}(\Delta_J) \subseteq \proj(\Delta \cup \{-\theta + \delta\}),
\]
where $\proj$ is the projection onto the real roots.
Moreover, we have
\[
t_{\clfw_r^{\vee}} (w^J)^{-1} \left( \alpha_r \right) = -\theta + \delta
\qquad \Longrightarrow \qquad
(w^J)^{-1} \left( \alpha_r \right) = -\theta.
\]
by definition of $t_{\clfw_r^{\vee}}$.

Finally, when $\aff{\g}$ is of dual untwisted type, the result follows from interchanging long roots and short roots in the root system.
\end{proof}

\begin{ex} \label{ex: prop 4.1}
Let us consider the case of type $E_6^{(1)}$ with $r = 1$.
In this case, one reduced expression is
\begin{equation*}
	w^J = s_6 s_5 s_4 s_3 s_1 s_2 s_4 s_3 s_5 s_4 s_2 s_6 s_5 s_4 s_3 s_1
\end{equation*}
(\textit{cf.}~Remark \ref{rem: w^J}), and $\tau_1 \colon I \rightarrow I$ is given by 
\begin{equation*}
	\tau_1(0) = 1,\quad\!
	\tau_1(1) = 6,\quad\!
	\tau_1(2) = 3,\quad\!
	\tau_1(3) = 5,\quad\!
	\tau_1(4) = 4,\quad\!
	\tau_1(5) = 2,\quad\!
	\tau_1(6) = 0.
\end{equation*}
Thus $\tau_1^{-1}(0) = 6$ in this case, and we also have $\tau_1^{-1} = w^J t_{-\clfw_1^{\vee}}$.
Then one can check that $w^J(\alpha_i) = \alpha_{\tau_1^{-1}(i)}$ for $i \in J$; for example,
\begin{align*}
	w^J(\alpha_6) &= {\color{black!30} s_6 s_5 s_4} s_3 s_1 {\color{black!30} s_2} s_4 s_3 s_5 s_4 {\color{black!30} s_2} s_6 s_5 {\color{black!30} s_4 s_3 s_1}(\alpha_6) = \alpha_1 = \alpha_{\tau_1^{-1}(1)},
\end{align*}
where we apply the relation $s_is_j(\alpha_i) = \alpha_j$ (resp.\ $s_i(\alpha_j) = \alpha_j$) if $A_{ij} = -1$ (resp.\ $A_{ij} = 0$) at each such a pair marked as black color (resp.\ gray color).
%Furthermore, a direct computation shows $w^J(\alpha_1) = -\theta$.
\end{ex}

%===================
\section{Background on crystals} \label{sec:crystals}

\subsection{Crystals}
We recall the definition of a $U_q(\aff{\g})$-crystal~\cite{K90,K91}.
We also refer the reader to~\cite{BS17} for more information on crystals.

\begin{df}
\label{def:abstract_crystal}
An \defn{abstract $U_q(\aff{\g})$-crystal} is a set $B$ with, for each $i \in I$, \defn{crystal operators} $\et_i, \ft_i \colon B \to B \sqcup \{ \zero \}$, statistics $\varepsilon_i, \varphi_i \colon B \to \ZZ \sqcup \{-\infty\}$, and \defn{weight function} $\wt \colon B \to \aff{P}$ such that for all $b \in B$ and $i \in I$, the following holds
\begin{enumerate}
\item $\inner{\wt(b)}{\coroot_i} + \varepsilon_i(b) = \varphi_i(b)$;
\item $\wt(e_i b) = \wt(b) + \alpha_i$;
\item $\varepsilon_i(b) = -\infty$ implies $\et_i b = \zero$ and $\varphi_i(b) = -\infty$ implies $\ft_i b = \zero$;
\item $\et_i b = b'$ if and only if $b = \ft_i b'$ for some $b' \in B$;
\item $\varepsilon_i(\ft_i b) = \varepsilon_i(b) + 1$ if $\ft_i b \neq \zero$.
\end{enumerate}
We will also extend $\et_i \zero = \ft_i \zero = \zero$.
\end{df}

For two abstract $U_q(\aff{\g})$-crystals $B$ and $C$, a \defn{crystal morphism} $\psi \colon B \to C$ is a set-theoretic map $\psi \colon B \to C \sqcup \{\zero\}$ such that
\begin{enumerate}
\item commutes with all nonzero crystal operators: for all $i \in I$, we have $\psi(\et_i b) = \et_i \psi(b)$ when $\et_i b \in B$ and $\psi(b), \psi(\et_i b) \in C$ and similarly for $\ft_i$; and
\item preserves $\eta = \varepsilon_i, \varphi_i, \wt$ ($i \in I$): we have $\eta\bigl( \psi(b) \bigr) = \eta(b)$ whenever $\psi(b) \in C$.
\end{enumerate}
We call $\psi$ an \defn{embedding} (resp.\ \defn{isomorphism}) if it is an injective (resp.\ bijective) map of the sets $B \to C$ (specifically, $\zero$ is not in the image).
A morphism is called \defn{strict} if it commutes with $\et_i$ and $\ft_i$ for all $i \in I$.
When $B$ is isomorphic to $C$, we denote this by $B \iso C$.
We call $B$ a \defn{subcrystal} of $C$ if $B$ is a subset of $C$ such that the inclusion map is a crystal embedding, and we simply write this as $B \subseteq C$. 
We say $B$ is a \defn{full subcrystal} if the embedding $\psi$ is strict.
An abstract $U_q(\aff{\g})$-crystal $B$ is \defn{connected} if its only full subcrystal is $B$.

For $J \subseteq I$, there is a natural definition of a $U_q(\aff{\g}_J)$-crystal by restricting the index set to $J$.
As such, there are natural restriction functors from abstract $U_q(\aff{\g})$-crystals to $U_q(\aff{\g}_J)$-crystals.

An abstract $U_q(\aff{\g})$-crystal $B$ is an \defn{upper regular} (resp.\ \defn{lower regular}) crystal if
\[
\varepsilon_i(b) = \abs{\{ k \mid \et_i^k(b) \neq \zero \}} < \infty,
\qquad\qquad
\varphi_i(b) = \abs{\{ k \mid \ft_i^k(b) \neq \zero \}} < \infty,
\]
respectively, for all $b \in \mcB$.
This is sometimes referred to as an upper (resp.\ lower) seminormal crystal.
We say $B$ is \defn{regular} if it is upper and lower regular.
An element $b \in B$ is \defn{highest weight} (resp.\ \defn{lowest weight}) if $\et_i b = \zero$ (resp.\ $\ft_i b = \zero$) for all $i \in I$.
For any $J \subseteq I$, we say $b \in B$ is \defn{$J$-highest weight} (resp.\ \defn{$J$-lower weight}) if it is highest (resp.\ lowest) weight when $B$ is considered as a $U_q(\aff{\g}_J)$-crystal.

When an abstract $U_q(\aff{\g})$-crystal $B$ is one part of the crystal basis\footnote{A crystal basis is a pair $(L, B)$, where $L$ is the crystal lattice.}~\cite{K90,K91} for a $U_q(\aff{\g})$ with $\et_i, \ft_i$ the Kashiwara operators, we call $B$ a \defn{$U_q(\aff{\g})$-crystal}.
All of the $U_q(\aff{\g})$-crystals considered in this manuscript will be upper regular as they will come from the category $\mathcal{O}_q^{\mathrm{int}}$ of integrable $U_q(\aff{\g})$-modules or the negative half of $U_q(\aff{\g})$.

\begin{ex} \label{ex:crystal T}
An abstract $U_q(\aff{\g})$-crystal that is not a $U_q(\aff{\g})$-crystal is $T_{\lambda} = \{t_{\lambda}\}$, where $\lambda \in P$, whose crystal structure is given by
\[
\wt(t_{\lambda}) = \lambda,
\qquad\qquad
\varepsilon_i(t_{\lambda}) = \varphi_i(t_{\lambda}) = -\infty,
\]
for all $i \in I$.
Therefore, we must also have $\et_i t_{\lambda} = \ft_i t_{\lambda} = \zero$ for all $i \in I$.
\end{ex}

\begin{ex}
\label{ex:elem_crystal}
For $i \in I$, let $B_i := \{b_i(k) \mid k \in \ZZ\}$ denote the \defn{elementary $i$-crystal}, where the abstract $U_q(\aff{\g})$-crystal structure is given by $f_i b_i(k) = b_i(k-1)$ with $\wt(b_i(k)) = k\alpha_i$ and statistics $\varepsilon_i(b_i(k)) = -k$ and $\varepsilon_j\bigl(b_i(k)\bigr) = -\infty$ for all $j \neq i$.
By the crystal axioms, we have $\et_i b_i(k) = b_i(k+1)$ and $\et_j b_i(k) = \ft_j b_i(k) = \zero$ and the statistics $\varphi_i\bigl(b_i(k)\bigr) = k$ and $\varphi_j\bigl(b_i(k)\bigr) = -\infty$ for all $j \neq i$.
\end{ex}

Let $B$ and $C$ be two abstract $U_q(\aff{\g})$-crystals.
The direct sum of $B$ and $C$ is the abstract $U_q(\aff{\g})$-crystal $B \oplus C$ whose underlying set is the disjoint union $B \sqcup C$ with the natural inherited crystal structure from $B$ and $C$.
Hence, the category of (abstract) (upper/lower seminormal) $U_q(\aff{\g})$-crystals has many of the properties of an abelian category.
The tensor product of $B$ and $C$ is the abstract $U_q(\aff{\g})$-crystal $B \otimes C$ whose underlying set is the Cartesian product $B \times C$ with crystal operators
\[
\et_i(b\otimes c) := \begin{cases}
b \otimes (\et_i c) &\text{if }\varepsilon_i(b) \leq \varphi_i(c), \\
(\et_i b) \otimes c &\text{if }\varepsilon_i(b) > \varphi_i(c),
\end{cases}
\quad
\ft_i(b\otimes c) := \begin{cases}
b \otimes (\ft_i c) &\text{if }\varepsilon_i(b) < \varphi_i(c), \\
(\ft_i b) \otimes c &\text{if }\varepsilon_i(b) \geq \varphi_i(c),
\end{cases}
\]
weight function $\wt(b \otimes c) = \wt(b) + \wt(c)$, and statistics
\begin{align*}
\varepsilon_i(b \otimes c) & = \max\bigl( \varepsilon_i(c), \varepsilon_i(c) + \varepsilon_i(b) - \varphi_i(c) \bigr),
\\
\varphi_i(b \otimes c) & = \max\bigl( \varphi_i(b), \varphi_i(c) + \varphi_i(b) - \varepsilon_i(b) \bigr).
\end{align*}
From~\cite{K90,K91}, this is an abstract $U_q(\aff{\g})$-crystal and makes the category of $U_q(\aff{\g})$-crystals into a tensor category.
In particular, the tensor product is associative.
Furthermore the tensor product of two (upper/lower regular) $U_q(\aff{\g})$-crystals is again a (upper/lower regular) $U_q(\aff{\g})$-crystal, which makes upper/lower regular $U_q(\aff{\g})$-crystals into a full tensor subcategory.

\begin{rem}
In this paper, we follow~\cite{BS17}, which uses the opposite tensor product convention of Kashiwara~\cite{K90,K91}.
\end{rem}

From the definition, we can realize an abstract $U_q(\aff{\g})$-crystal $B$ as an $I$-edge-colored weighted directed graph called the \defn{crystal graph} (up to the statistics $\varepsilon_i, \varphi_i$).
In particular, any \defn{$i$-string} (a connected full subcrystal of $B$ considered as a $U_q(\aff{\g}_{\{i\}})$-crystal) through an element $b$ in a regular crystal is depicted as
\[
\et_i^{\varepsilon_i(b)} b \xrightarrow[\hspace{20pt}]{i} \cdots \xrightarrow[\hspace{20pt}]{i} \et_i b \xrightarrow[\hspace{20pt}]{i} b \xrightarrow[\hspace{20pt}]{i} \ft_i b \xrightarrow[\hspace{20pt}]{i} \cdots \xrightarrow[\hspace{20pt}]{i} \ft_i^{\varphi_i(b)} b.
\]
For nonregular crystals, the $i$-strings might be (semi)infinite and the number of times we can apply $\et_i$ or $\ft_i$ to $b$ might differ from $\varepsilon_i(b)$ and $\varphi_i(b)$, respectively.

Kashiwara showed~\cite{K90,K91} that for every affine Lie algebra (more generally, any symmetrizable Kac--Moody Lie algebra), their irreducible highest weight representations $V(\lambda)$ for $\lambda \in \aff{P}^+$ admit a crystal basis\footnote{We will not differentiate between crystals and crystal bases here, but instead are using it to indicate a connection to representation theory. Hence, we simply refer the reader to, \textit{e.g.},~\cite{K90,K91}, for a precise definition of a crystal basis.} $B(\lambda)$ that is a regular crystal with a unique highest weight element $u_{\lambda}$.
The \defn{depth} of an element $b \in B(\lambda)$ is the number of crystal raising operators needed to reach $u_{\lambda}$; this is well defined and can be computed from $\lambda - \wt(b)$.
Furthermore, Kashiwara also showed $B(\lambda)$ carries a natural action of the Weyl group given by
\[
s_i b = \begin{cases} \et^{-k}_i b & \text{if } k \leq 0, \\  \ft^k_i b & \text{if } k \geq 0, \end{cases}
\qquad
\text{ where }
k := \inner{\wt(b)}{\alpha_i^{\vee}}.
\]
Thus, this corresponds to reflecting an element across the middle of its $i$-string.

All Cartan subalgebras of $\aff{\g}$ are conjugate to each other by elements of the Weyl group, thus we can twist the root system by any element $w \in W$.
For $B(\lambda)$, this produces an isomorphic crystal as the crystal is independent of the choice of Cartan subalgebra.
Thus, for any root $\beta \in \Phi$ and $b \in B$ for any crystal $B$, we can define a crystal raising and lowering operator by
\[
e_{\beta} b := w^{-1} e_i w b,
\qquad\qquad
f_{\beta} b := w^{-1} f_i w b,
\]
for some $w \in W$ and $i \in I$ such that $w \beta = \alpha_i$.
However, such root operators can depend upon the choice of $w$ and $i$, which was noted in~\cite[Rem.~7.4.2]{K94}.
The earliest appearance of this idea that the authors are aware of is~\cite{K94}, but it has been employed in~\cite{GJKKK14} to construct the additional crystal operators for the queer Lie superalgebra.
Some more recent uses of general root operators was undertaken in~\cite{LL21,Patimo21,PT23}.
This is slightly different than the general root operators in~\cite{L95-2}, which do not depend on a Weyl group element.

\begin{ex}
Consider the crystal $B(\clfw_1+\clfw_2)$ in type $A_2$:
\[
\begin{tikzpicture}[>=latex, xscale=2, yscale=.7]
\node (e12) at (-2, 0) {$E_{12}$};
\node (e1) at (-1,1) {$E_1$};
\node (e2) at (-1,-1) {$E_2$};
\node (h1) at (0,1) {$H_1$};
\node (h2) at (0,-1) {$H_2$};
\node (f1) at (1,1) {$F_1$};
\node (f2) at (1,-1) {$F_2$};
\node (f12) at (2,0) {$F_{12}$};
\draw[->,red] (e12) -- node[midway,anchor=south east] {\tiny $2$} (e1);
\draw[->,blue] (e12) -- node[midway,anchor=north east] {\tiny $1$} (e2);
\draw[->,red] (e2) -- node[midway,anchor=north] {\tiny $2$} (h2);
\draw[->,blue] (e1) -- node[midway,anchor=south] {\tiny $1$} (h1);
\draw[->,red] (h2) -- node[midway,anchor=north] {\tiny $2$} (f2);
\draw[->,blue] (h1) -- node[midway,anchor=south] {\tiny $1$} (f1);
\draw[->,red] (f1) -- node[midway,anchor=south west] {\tiny $2$} (f12);
\draw[->,blue] (f2) -- node[midway,anchor=north west] {\tiny $1$} (f12);
\end{tikzpicture}
\]
We have $\beta := \alpha_1 + \alpha_2 = s_1 \alpha_2 = s_2 \alpha_1$, and so we can define $f_{\beta} = s_1 f_2 s_1$ or $f'_{\beta} = s_2 f_1 s_2$.
Therefore, we have $f_{\beta} E_{12} = H_2$ but $f'_{\beta} E_{12} = H_1$.
\end{ex}

Kashiwara also showed that the lower half of the quantum group $U^-_q(\aff{\g})$ admits a crystal basis denoted $B(\infty)$.
Moreover, he showed that $B(\infty)$ is isomorphic to the direct (injective) limit of $B(\lambda)$ as $\lambda \to \infty$ by identifying the highest weight elements up to a trivial weight shift given by tensoring with $T_{\lambda}$.
More precisely, there exists a directed system on $\{T_{-\lambda} \otimes B(\lambda) \mid \lambda \in \aff{P}^+\}$, which means the diagram
\[
\begin{tikzpicture}[>=latex]
\node (0) at (0,0) {$T_{-\lambda} \otimes B(\lambda)$};
\node (1) at (5,0) {$T_{-\lambda - \mu} \otimes B(\lambda + \mu)$};
\node (2) at (5,-2) {$T_{-\lambda - \mu - \nu} \otimes B(\lambda + \mu + \nu)$};
\draw[->] (0) -- node[midway,above] {$\psi_{\lambda,\mu}$} (1);
\draw[->] (1) -- node[midway,right] {$\psi_{\lambda+\mu,\nu}$} (2);
\draw[->] (0) -- node[midway,below] {$\psi_{\lambda,\mu+\nu}$} (2);
\end{tikzpicture}
\]
commutes for all $\lambda, \mu, \nu \in \aff{P}^+$, where $\psi_{\lambda,\mu}$ being the crystal embedding defined by $\psi_{\lambda,\mu}(t_{-\lambda} \otimes u_{\lambda}) = t_{-\lambda-\mu} \otimes u_{\lambda+\mu}$.
Therefore, the result of Kashiwara is
\begin{equation}
\label{eq:direct_limit_construction}
B(\infty) \iso \varinjlim_{\lambda \to \infty} B(\lambda)
\end{equation}
from this directed system with $u_{\infty} \mapsto [t_0 \otimes u_0]$.
We also refer the reader to~\cite[Ch.~7]{K90f} for more information.

%%===================
\subsection{Polyhedral model}

For the remainder of this section, unless otherwise noted, we restrict our discussion to $\g$, which will be a finite dimensional (semi)simple Lie algebra.
Let $N := \ell(w_0)$, and recall $R(w)$ is the set of reduced expressions of $w$.

We now describe a way to parameterize of elements in $B(\mu)$ for $\mu \in P^+ \sqcup \{\infty\}$ by using integer points in particular polyhedron, which depends on a particular choice of a reduced expression $\ii \in R((w^J)^{-1})$, where $J$ is such that $W_J$ is the parabolic subgroup corresponding to the stabilizer of $\mu$ (if $\mu = \infty$, then $J = \emptyset$ and $w^J = w_0$).
In particular, for $\mu = \clfw_r$, we have $J = I_0 \setminus \{r\}$.
This is known as the \defn{polyhedral model} and can be described as follows.

Recall the elementary crystal $B_i$ from Example~\ref{ex:elem_crystal}.
By~\cite{K93}, for any $i \in I$, there is a unique embedding of crystals
\begin{equation}
	\label{eq:kashiwara_embedding}
	\Psi_i \colon B(\infty) \hookrightarrow B_i \otimes B(\infty),
	\qquad\qquad
	u_{\infty} \mapsto b_i(0) \otimes u_{\infty},
\end{equation}
that we call the \defn{Kashiwara embedding}.
By iterating this and considering the inclusion $T_{-\mu} \otimes B(\mu) \hookrightarrow B(\infty)$ from~\eqref{eq:direct_limit_construction}, we have an embedding~\cite{KS97}
\[
\Psi_{\ii} \colon B(\mu) \hookrightarrow B_{i_1} \otimes \cdots \otimes B_{i_N} \otimes u_{\mu}
\]
for any $\ii = (i_1, \dotsc, i_N) \in R((w^J)^{-1})$.\footnote{These results hold for any Kac--Moody Lie algebra $\g$, but in that case, we need $\ii$ to be a sequence $\ii = (i_j \in I)_{j=1}^{\infty}$ with $i_j \neq i_{j+1}$ for all $j = 1,2,\ldots$ and $\abs{\{j = 1,2,\ldots \mid i_j = i\}} = \infty$ for all $i \in I$.}
Furthermore, we can equate the image $b \mapsto b_{i_1}(-k_1) \otimes \cdots \otimes b_{i_N}(-k_N) \otimes u_{\mu}$ with the integer point $(k_1, \dotsc, k_N) \in \ZZ_{\geq 0}^N$ inside a particular polyhedral cone.
For more information, see~\cite{Nakashima99,NZ97}. % with some recent progress in~\cite{Kanakubo23}.
Note that we have negated the points in the cone to simplify our exposition below.

The crystal structure for the polyhedral model is given by repeatedly applying the tensor product rule, which in this case can be described more precisely as follows (\textit{cf.}~\cite[Prop.~2.1.1]{KN94}).
Consider $\kk = (k_1, \dotsc, k_N)$.
Define
$
a_i(m) := k_{N+1-m} + \sum_{j=1}^{m-1} A_{i,i_{N+1-j}} k_{N+1-j}.
$
Then
\begin{gather*}
\varepsilon_i(\kk) %= \max_{1 \leq k \leq n}\left( \sum_{j=1}^k \varepsilon_i(b_j) - \sum_{j=1}^{k-1}\varphi_i(b_j) \right)
  = \max_{1 \leq m \leq N}\bigl( a_i(m) \bigr),
\hspace{30pt}
\varphi_i(\kk) %= \max_{1 \leq k \leq N} \left( \varphi_i(b_N) + \sum_{j=k}^{N-1} \big( \varphi_i(b_j) - \varepsilon_i(b_{j+1}) \big) \right)
   = \max_{1 \leq m \leq N}\bigl( \lambda_i + a_i(m) \bigr),
\\
\wt(\kk) = -k_1 \alpha_{i_1} - \cdots - k_N \alpha_{i_N},
\end{gather*}
where $\lambda_i = -\sum_{j=1}^N A_{i,i_j} k_j$.
The crystal operators are determined as follows.
\begin{itemize}
\item If $a_i(m) > a_i(j)$ for $1 \leq j < m$ and $a_i(m) \geq a_i(j)$ for $m < j \leq N$:
\[
e_i(\kk) = (k_1, \dotsc, k_m-1, \dotsc, k_N). %b_N \otimes \cdots \otimes e_i b_k \otimes \cdots \otimes b_1.
\]
\item If $a_i(m) \geq a_i(j)$ for $1 \leq j < m$ and $a_i(k) > a_i(j)$ for $m < j \leq N$:
\[
f_i(\kk) = (k_1, \dotsc, k_m+1, \dotsc, k_N). % b_N \otimes \cdots \otimes f_i b_k \otimes \cdots \otimes b_1.
\]
\end{itemize}
%For any $\ii \in R((w^J)^{-1})$, we denote the crystal by $P_{\ii}(\mu)$.

%===================
\subsection{String data}

There is another general set of points in $\ZZ_{\geq 0}^N$ that we can use to uniquely parameterize the elements in $B(\infty)$ (and hence $B(\lambda)$) associated to a fixed reduced word $\ii \in R(w_0)$.
The \defn{$\ii$-string data} (when $\ii$ is clear, we will simply call this the string data) of $b \in B(\infty)$ is the sequence $(d_1, d_2, \dotsc, d_N)$ constructed recursively by
\begin{equation}
\label{eq:string_data_def}
d_1 = \varepsilon_{i_1}(b), \qquad
d_2 = \varepsilon_{i_2}(\et_{i_1}^{d_1} b), \qquad
\dotsc,
\qquad
d_N = \varepsilon_{i_N}(\et_{i_{N-1}}^{d_{N-1}} \cdots \et_{i_1}^{d_1} b).
\end{equation}
However, not every point in $\ZZ_{\geq 0}^N$ corresponds to an element in $B(\infty)$ (for example, $(1,0,1) \in \ZZ^3$ in type $A_2$ for any $\ii \in R(w_0)$), but all possible string data is known that to be given by integer points in a rational cone that can be computed~\cite{Littelmann98}, which is called the Littelmann string cone.
The crystal structure can also be directly computed by piecewise-linear functions from~\cite{BFZ96}, but we do not describe this here as we do not use it.
We refer the reader to~\cite{KS22,KKN24} for more details and recent progress on understanding the Littelmann string cones.

%===================
\subsection{Star crystal structure}

There exists a map $\ast \colon B(\infty) \to B(\infty)$ that is induced from an anti-involution of the quantum group $U_q(\g)$~\cite{K93,Lusztig90}.
However, we can give an intrinsic crystal theoretical description using the Kashiwara embedding~\eqref{eq:kashiwara_embedding}.

Fix some $i \in I$, then the \defn{star crystal operators} $\et_i^{\ast}, \ft_i^{\ast} \colon B(\infty) \to B(\infty) \sqcup \{\zero\}$ and statistics $\varepsilon_i^*, \varphi_i^* \colon B(\infty) \to \ZZ \sqcup \{-\infty\}$ are defined as follows.
For some $b \in B(\infty)$, suppose $\Psi_i(b) = b_i(-k) \otimes b'$, and then define
\begin{align*}
\et_i^{\ast}(b) & = \begin{cases} \zero & \text{if } k = 0, \\ \Psi_i^{-1}\bigl( b_i(-k+1) \otimes b') & \text{if } k > 0, \end{cases}
&
\ft_i^{\ast}(b) & = \Psi_i^{-1}(b_i(-k-1) \otimes b'),
\\
\varepsilon_i^{\ast}(b) &= k,
&
\varphi_i^{\ast}(b) &= \langle \wt(b), \alpha_i^{\vee} \rangle + k.
\end{align*}
It is known that this gives an additional crystal structure on $B(\infty)$ that we call the \defn{star crystal structure}~\cite{K93,KS97,Lusztig90}.
Furthermore, the star crystal structure on $B(\infty)$ is isomorphic to $B(\infty)$ (under the usual crystal structure), where the map $\ast$ is the isomorphism given by mapping $u_{\infty} \to u_{\infty}$, and we call this the \defn{bicrystal structure} of $B(\infty)$.
More precisely, for any $\ii \in R(w_0)$, we have
\begin{equation}
\label{eq:star_defn}
b^* := \ast(b) = \bigl( \ft_{i_1}^{\ast} \bigr)^{d_1} \cdots \bigl(\ft_{i_N}^{\ast} \bigr)^{d_N} \et_{i_N}^{d_N} \cdots \et_{i_1}^{d_1} b,
\end{equation}
where $(d_1, \dotsc, d_N)$ is the $\ii$-string data of $b$ from~\eqref{eq:string_data_def}.
As a consequence of~\eqref{eq:star_defn}, the point $\Psi_{\ii}(b) \in \ZZ_{\geq0}^N$ from the $\ii$-polyhedral model is the $\ii$-string data of $b^*$.

%===================
\subsection{PBW crystals}

Let us briefly recall a PBW basis and the crystal of Lusztig data, which is isomorphic to $B(\infty)$~\cite{Lusztig90,Lusztig93,Saito94}.

For $\ii = (i_1, \dotsc, i_N) \in R(w_0)$, a total order $\prec$ on the positive roots of $\g$ is constructed by
\begin{equation}\label{eq:beta_k}
\Phi^+=\{\beta_1:=\alpha_{i_1} \prec \beta_2:=s_{i_1}(\alpha_{i_2}) \prec \cdots \prec \beta_N := s_{i_1}\cdots s_{i_{N-1}}(\alpha_{i_N})\}
\end{equation}
Any such total order $\prec$ is called a \defn{convex order} on $\Phi^+$, which is characterized (\textit{e.g.}, see~\cite{Papi}) by the property that for $\alpha, \beta \in \Phi^+$ such that $\alpha \prec \beta$ and $\alpha + \beta \in \Phi^+$, then $\alpha \prec \alpha + \beta \prec \beta$.

For $i\in I$, let $T_i$ be the $\QQ(q)$-algebra automorphism of $U_q(\g)$, which is denoted by $T''_{i,1}$ in~\cite{Lusztig93}.
For $1\leq k\leq N$, define $F_{\beta_k} := T_{i_{1}}T_{i_2}\cdots T_{i_{k-1}}(F_{i_{k}})$,
and for $\cc = (c_{\beta_1},\ldots,c_{\beta_N})\in\mathbb{Z}_+^N$, let
\begin{equation}\label{eq:PBW vector}
\begin{split}
b _\ii(\bf c) & = 
F_{\beta_1}^{(c_{\beta_1})} F_{\beta_2}^{(c_{\beta_2})}\cdots F_{\beta_N}^{(c_{\beta_N})},
\end{split}
\end{equation}
where $F_{\beta_k}^{(c)}$ is the divided power of $F_{\beta_k}$.
Then the set $B_\ii:=\{ b _\ii(\cc) \mid \cc \in \ZZ_{\geq 0}^{N}\}$ is a $\QQ(q)$-basis of $U^-_q(\g)$ called a \defn{PBW basis}.

Let $A_0$ be the subring of $\QQ(q)$ consisting of rational functions regular at $q=0$.
The $A_0$-lattice $L(\infty)$ of $U^-_q(\g)$ generated by $B_\ii$ is independent of the choice of $\ii$ and invariant under the Kashiwara operators $\et_i$, $\ft_i$.
Moreover, the induced crystal (basis) $\pi(B_\ii)$ under the canonical projection $\pi \colon L(\infty) \rightarrow L(\infty) / q L(\infty)$ is isomorphic to $B(\infty)$. 
We identify 
\[
\B_\ii := \ZZ_{\geq 0}^N
\] 
with the crystal $\pi(B_\ii) \iso B(\infty)$ under the map $\cc \mapsto b_\ii(\cc)$.
We call $\cc \in \B_\ii$ an \defn{$\ii$-Lusztig data} associated to $\ii$.
Hence, $\B_\ii$ is called the \defn{crystal of $\ii$-Lusztig data} or, when there is no confusion for $\ii$, the \defn{PBW crystal}.
Note that all possible Lusztig data are the integer points of the polyhedron $\RR^N_{\geq 0}$, which is $\ZZ_{\geq 0}^N$, as opposed to the more subtle string data.

Let us fix a reduced expression $\ii \in R(w_0)$, and let $\beta_1 \prec \cdots \prec \beta_N$ be the corresponding convex order.
We use the results of~\cite{Lusztig90,K90,K91} to compute the crystal operators on the $\ii$-Lusztig data.
We first have
\begin{subequations}
\begin{align}
\ft_{i}(c_1, \dotsc, c_N) &= (c_1+1,\ldots,c_N), &&
\text{when $\beta_1=\alpha_i$},\\
\ft^{\ast}_{i}(c_1, \dotsc, c_N) &= (c_1,\ldots,c_N+1), && \text{when $\beta_N=\alpha_i$}.
\end{align}
\end{subequations}
The remaining crystal structure, so for $\ft_i$ with $i \neq i_1$, is given by using the braid moves to change the Lusztig data so that $i$ is the first entry (such a reduced expression exists because $w_0$ is the longest element).
More precisely, the action of the braid moves is given by explicit piecewise-linear functions; see~\cite[Prop.~2.1.2]{Saito94} and ~\cite[Prop.~3.6]{BZ01}.
However, when $\ii$ is a reduced expression in~\eqref{eq:nice_decomp}, we can provide a more explicit description of the crystal operators.
The reduced expressions~\eqref{eq:nice_decomp} were shown to be \defn{simply-braided} by Salisbury, Schultze, and Tingley~\cite[Lemma~4.17]{SST18}, which is a property that allows us to define crystal operators on Lusztig data by a simple bracketing rule by~\cite[Thm.~4.5]{SST18} (\textit{cf.}~\cite[Prop.~3.2, Rem.~3.3]{JK19}).

It is also known that for any $\lambda \in P^+$, to parameterize elements of $B(\lambda)$, it is sufficient to restrict to the Lusztig data according to $\ii \in R\bigl((w^J)^{-1}\bigr)$, where $J \subseteq I_0$ is such that $W_J$ is the stabilizer of $\lambda$.
Indeed, if we consider some $\ii^+ = (i_1, \dotsc, i_N) \in R(w_0)$ such that the prefix is $\ii = (i_1, \dotsc, i_{\ell})$ (with $\ell = \ell(w^J) = \ell((w^J)^{-1})$ here), then for any $t_{-\lambda} \otimes b \in T_{-\lambda} \subseteq B(\lambda) \subseteq B(\infty)$ with $\ii^+$ Lusztig data $\cc \in \ZZ_{\geq 0}^N$, we must have $c_j = 0$ for all $j > \ell$.

%%%%%%%%%%%%%%%%%%%%%%%%%%%%%%%%%%%%%%%%
\section{Minuscule crystals} \label{sec:minuscule crystals}

\subsection{Posets, heaps, and reverse plane partitions}

Throughout this paper, unless otherwise stated, we will assume all of our posets to be finite.
Let $(\mcP, \leq)$ be a (finite) poset, and we will often write this just as $\mcP$ when there is no danger of confusion about the ordering.
A \defn{cover relation} in $\mcP$ are elements $\alpha < \beta$ such that there is no $\gamma \in \mcP_r$ such that $\alpha < \gamma < \beta$, and this is denoted by $\alpha \lessdot \beta$.
A \defn{weak chain} of length $\ell$ in $\mcP$ is a sequence of elements $p_1 \leq p_2 \leq \cdots \leq p_{\ell}$.
A \defn{lattice} is a poset with a well defined meet $\wedge$ (or infimum) and join $\vee$ (or supremum) for every pair of elements.
We draw the Hasse diagram of a poset (the digraph of covering relations) so that smaller elements are below larger elements unless otherwise indicated and omit the directional arrows.
For more information on posets, we refer the reader to~\cite[Ch.~3]{ECI}.

An \defn{order ideal} is a subset $\mcI \subset \mcP$ such that if $x \in \mcI$ and $x' \leq x$, then $x' \in \mcI$.
The set of all order ideals $\mcJ(\mcP)$ is a lattice under containment with the join (resp.\ meet) being union (resp.\ intersection) of the order ideals.
A \defn{join-irreducible element} $x$ in a lattice is an element that covers precisely one element; equivalently for $x = a \vee b$, then either $a = x$ or $b = x$.
For any join-irreducible element $\mcI \in \mcJ(\mcP)$, there is a unique element $x \in \mcI \subseteq \mcP$ such that $\mcI \setminus \{x\} \in \mcJ(\mcP)$.
Hence, we can recover $\mcP$ from $\mcJ(\mcP)$ as the subposet of join-irreducible elements of $\mcJ(\mcP)$.

%\begin{ex}
%For succinctness in the following example, we write sets as words; so $12 = \{1,2\}$.
%For the boolean lattice on 2 elements $\mcB_2$ and is order ideal lattice $\mcJ(\mcB_2)$, the Hasse diagrams are
%\[
%\begin{tikzpicture}[baseline=0]
%\node (12) at (0,1) {$12$};
%\node (1) at (-1,0) {$1$};
%\node (2) at (1,0) {$2$};
%\node (0) at (0,-1) {$\emptyset$};
%\draw[-] (12) -- (1);
%\draw[-] (12) -- (2);
%\draw[-] (1) -- (0);
%\draw[-] (2) -- (0);
%\end{tikzpicture}
%\qquad\qquad
%\begin{tikzpicture}[baseline=0]
%\node (all) at (0,2) {\underline{$\{\emptyset, 1,2, 12\}$}};
%\node (12) at (0,1) {$\{\emptyset, 1,2\}$};
%\node (1) at (-1,0) {\underline{$\{\emptyset, 1\}$}};
%\node (2) at (1,0) {\underline{$\{\emptyset, 2\}$}};
%\node (0) at (0,-1) {\underline{$\{\emptyset\}$}};
%\node (empty) at (0,-2) {$\emptyset$};
%\draw[-] (all) -- (12);
%\draw[-] (12) -- (1);
%\draw[-] (12) -- (2);
%\draw[-] (1) -- (0);
%\draw[-] (2) -- (0);
%\draw[-] (0) -- (empty);
%\end{tikzpicture}
%\]
%respectively, where the join-irreducible elements in $J(\mcB_2)$ are underlined.
%Note that $\emptyset = 1 \wedge 2$ and $12 = 1 \vee 2$ in $\mcB_2$.
%\end{ex}

A \defn{linear extension} of a poset $\mcP$ is a total order on the elements of $\mcP$ that is compatible with the partial order; that is, if we write the elements of $\mcP$ as a tuple $(p_1, \dotsc, p_N)$ to denote the total order, then we have $p_i < p_j$ (in $\mcP$) for all $i < j$.
Alternatively, a linear extension is an order preserving bijection $L \colon \mcP \to \{1, 2, \dotsc, \abs{\mcP}\}$ (under the natural order).
A \defn{reverse plane partition (RPP)} on $\mcP$ is an order reversing map 
$\rho \colon \mcP \to \ZZ_{>0}$; that is, if $a \leq b$ in $\mcP$, then $\rho(a) \geq \rho(b)$ (in the usual order on $\ZZ_{>0}$).
There is an easy bijection between RPPs and weak chains $\mcC := \mcI_1 \subseteq \cdots \subseteq \mcI_k$ in $\mcJ(\mcP)$ that, roughly speaking, simply stacks the order ideals to obtain the RPP.
More precisely, given a weak chain $\mcC$, we define the corresponding RPP by
\[
\rho_{\mcC}(b) = \abs{\{j \mid b \in \mcI_j \}}.
\]
We can construct the inverse bijection by using the order reversing property of $\rho$ to write it as a sum of indicator functions for the order ideals.

Following a construction by Robert Steinberg given in~\cite[Sec.~11]{Proctor84}, for a minuscule node $r$ of $\g$, the \defn{minuscule poset} $\mcP_r$ is the poset on the elements
\[
\{ \alpha \in \Phi^+ \mid \inner{\clfw_r}{-\alpha^{\vee}} < 0 \},
\]
where $\alpha \leq \beta$ if and only if $\beta - \alpha \in Q^+$.
Furthermore, if $\alpha \lessdot \beta$, then $\beta - \alpha = \alpha_i$ for some $i \in I_0$.
The elements are precisely $\Phi^J$ for $J = I_0 \setminus \{r\}$, and convex orders of $\Phi^J := \Phi \setminus \Phi_J$ are exactly the linear extensions of this poset.
Thus reduced expressions of $w^J$ are in bijection with linear extensions of $\mcP_r$.
From~\cite{Proctor99}, we can describe a minuscule poset as any self-dual $d$-complete poset.
An RPP for the poset $\mcP_r$ will be referred to as a \defn{minuscule RPP}.

A \defn{heap} (over $I_0$) is a poset $\mcP$ with a surjective map $\pi \colon \mcP \to I_0$ such that
\begin{itemize}
\item[(H1)] $\pi^{-1}(i)$ is totally ordered in $\mcP$,
\item[(H2)] if $i \sim j$, then $\pi^{-1}(i) \cup \pi^{-1}(j)$ is totally ordered in $\mcP$, and
\item[(H3)] the transitive closure of the above relations gives $\mcP$.
\end{itemize}
A \defn{minuscule heap} will be a heap on a minuscule poset.
Minuscule heaps were given a combinatorial characterization by Wildberger~\cite{Wildberger03}.
When $\g$ is simply-laced type, these were connected with quiver representations for a corresponding Dynkin quiver in~\cite{GPT18}.

Fix a linear extension $(p_1, \dotsc, p_{\ell})$ of $\mcP_r$.
The labels of a minuscule heap correspond to the indices $(i_1, \dotsc, i_{\ell}) = (\pi(p_1), \dotsc, \pi(p_{\ell}))$ for the minimal length coset representative $w^J = s_{i_{\ell}} \dotsm s_{i_1}$ (note the reversal of indices).
Note that if $p_i$ and $p_j$ are incomparable, then we must have $\pi(p_i) \not\sim \pi(p_j)$.
Furthermore, this is exactly the bijection between reduced expressions of $w^J$ and linear extensions of $\mcP_r$ previously mentioned.

\begin{ex}
Consider type $E_6$ and take $r = 6$.
Therefore, $J = \{1, 2, 3, 4, 5\}$ and $w^J$ is the element $w^{(2)}$ in Example~\ref{ex:nice_decomposition}.
The minuscule heap labeled by the linear extension corresponding to the reduced word in Example~\ref{ex:nice_decomposition} is given by Figure~\ref{fig:minuscule_heap_e6}.
Note that taking the dual poset gives the minuscule heap for $r =1$ with $w^{I \setminus \{1\}} = (w^{(2)})^{-1}$, and the dual linear extension $i \mapsto 17 - i$ corresponds to the reversal of the reduced word given in Example~\ref{ex:nice_decomposition}.
\end{ex}

\begin{figure}
\[
\begin{tikzpicture}[>=latex, xscale=2,baseline=0]
%w^J = s_1 s_3 s_4 s_5 s_6 s_2 s_4 s_5 s_3 s_4 s_1 s_3 s_2 s_4 s_5 s_6
% Normal poset like display
%\node (0) at (0,0) {$1$};
%\node (1) at (0,1) {$2$};
%\node (2) at (0,2) {$3$};
%\node (3) at (-1,3) {$5$};
%\node (3p) at (1,3) {$4$};
%\node (4) at (-1,4) {$6$};
%\node (4p) at (1,4) {$7$};
%\node (5) at (-1,5) {$8$};
%\node (5p) at (1,5) {$9$};
%\node (6) at (-1,6) {$10$};
%\node (6p) at (1,6) {$12$};
%\node (7) at (-1,7) {$11$};
%\node (7p) at (1,7) {$13$};
%\node (8) at (0,8) {$14$};
%\node (9) at (0,9) {$15$};
%\node (10) at (0,10) {$16$};
\node (0) at (4,0) {$1$};
\node (1) at (3,.5) {$2$};
\node (2) at (2,1) {$3$};
\node (3) at (1,1.5) {$5$};
\node (3p) at (2.5,1.5) {$4$};
\node (4) at (0,2) {$6$};
\node (4p) at (2,2) {$7$};
\node (5) at (1,2.5) {$8$};
\node (5p) at (3,2.5) {$9$};
\node (6) at (2,3) {$10$};
\node (6p) at (4,3) {$12$};
\node (7) at (2.5,3.5) {$11$};
\node (7p) at (3,3.5) {$13$};
\node (8) at (2,4) {$14$};
\node (9) at (1,4.5) {$15$};
\node (10) at (0,5) {$16$};
\foreach[var=\j, evaluate=\i as \j using int(\i+1)] \i in {0,...,9} {
  \draw[-] (\i) -- (\j);
  \ifthenelse{2 < \i \AND \i < 7}{
    \draw[-] (\i p) -- (\j p);
  }{}
}
\draw[-] (2) -- (3p);
\draw[-] (7p) -- (8);
\draw[-] (3) -- (4p);
\draw[-] (4p) -- (5);
\draw[-] (5p) -- (6);
\draw[-] (6) -- (7p);
\node (d1) at (0, -1) {$\alpha_1$};
\node (d2) at (2.5, -1+.5) {$\alpha_2$};
\node (d3) at (1, -1) {$\alpha_3$};
\node (d4) at (2, -1) {$\alpha_4$};
\node (d5) at (3, -1) {$\alpha_5$};
\node (d6) at (4, -1) {$\alpha_6$};
\draw[-,color=darkred] (d1) -- (d3) -- (d4) -- (d5) -- (d6);
\draw[-,color=darkred] (d4) -- (d2);
\draw[-,dotted,color=blue] (10) -- (4) -- (d1);
\draw[-,dotted,color=blue] (7) -- (3p) -- (d2);
\draw[-,dotted,color=blue] (9) -- (5) -- (3) -- (d3);
\draw[-,dotted,color=blue] (8) -- (6) -- (4p) -- (2) -- (d4);
\draw[-,dotted,color=blue] (7p) -- (5p) -- (1) -- (d5);
\draw[-,dotted,color=blue] (6p) -- (0) -- (d6);
\end{tikzpicture}
\]
\caption{
A minuscule heap $\mcP_6$ for type $E_6$ for the node $r = 6$ with the projection $\pi \colon \mcP_6 \to I_0$ (here $i \in I_0$ is written as $\alpha_i$) indicated by the dotted lines. % (equivalently $J = \{1,2,3,4,5\}$)
%with $\mcP_6$ labeled by the linear extension corresponding to $\cosetrepr{w_0} = w_0^J = s_6 s_5 s_4 s_2 s_3 s_1 s_4 s_3 s_5 s_4 s_2 s_6 s_5 s_4 s_3 s_1$.
}
\label{fig:minuscule_heap_e6}
\end{figure}

\subsection{Minuscule crystals}

In this section, we will assume $r$ is a minuscule node and $J = I_0 \setminus \{r\}$.
We describe results on crystals $B(s\clfw_r)$.
The $U_q(\g)$-representation $V(\clfw_r)$ is a \defn{minuscule representation}, which means $W$ acts transitively on the crystal basis $B(\clfw_r)$ or, equivalently, $\langle \clfw_r, \alpha^{\vee} \rangle \leq 1$ for all $\alpha \in \Phi^+$~\cite{Seshadri78}.
Therefore, we can parameterize elements in $B(\clfw_r)$ by $W^J$ (equivalently $W / W_J$) since the stabilizer of $\clfw_r$ is $W_J$.
Indeed, for any $w \in W$ and reduced expression $s_{i_1} \dotsm s_{i_\ell} = \cosetrepr{w}$, we have $b = \ft_{i_1} \cdots \ft_{i_\ell} u_{\clfw_r} \in B(\clfw_r)$, which is independent of the choice of reduced expression.
More specifically, the crystal structure is given locally by $\ft_i b = b'$ if and only if $s_i b = b' \in W / W_J$ and $\ell(b') > \ell(b)$.

We also note the following consequence of the transitive Weyl group action, which is likely well-known to experts.

\begin{prop}
\label{prop:minuscule_Decomposition}
For any $K \subseteq I_0$, the restriction of $B(\clfw_r)$ to $U_q(\g_K)$-crystals is a disjoint union of minuscule $U_q(\g_K)$-representations.
\end{prop}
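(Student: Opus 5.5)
The plan is to use the characterization of minuscule representations just recalled: $V(\lambda)$ is minuscule if and only if $\inner{\lambda}{\alpha^\vee} \le 1$ for all positive roots $\alpha$. Equivalently, the weights form a single Weyl group orbit, so every weight space is one-dimensional and every weight $\mu$ satisfies $\inner{\mu}{\alpha^\vee} \in \{-1,0,1\}$ for all roots $\alpha$.

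First I will decompose the restriction of $B(\clfw_r)$ to $U_q(\g_K)$ into connected components. Since $B(\clfw_r)$ is the crystal of an integrable $U_q(\g)$-module, its restriction is the crystal basis of the restricted module. Each connected component is therefore isomorphic to $B_K(\lambda)$, where $\lambda$ is the $K$-weight (the projection of $\wt$ to the weight lattice of $\g_K$) of a $K$-highest weight element $b$. It then suffices to show that each such $\lambda$ is a minuscule dominant weight for $\g_K$, or zero (the trivial representation counts as minuscule).

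The key step is a weight computation. Every element $b \in B(\clfw_r)$ has weight $w\clfw_r$ for some $w \in W$, because $W$ acts transitively on $B(\clfw_r)$. For any root $\alpha \in \Phi_K^+ \subseteq \Phi^+$ we get
\[
\inner{\wt(b)}{\alpha^\vee} = \inner{w\clfw_r}{\alpha^\vee} = \inner{\clfw_r}{(w^{-1}\alpha)^\vee} \in \{-1,0,1\},
\]
since $w^{-1}\alpha$ is a root, and $\inner{\clfw_r}{\beta^\vee}\in\{0,1\}$ for $\beta \in \Phi^+$ (hence in $\{-1,0\}$ for negative roots). The pairing with coroots of $\g_K$ depends only on the $K$-projection of the weight. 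So for a $K$-highest weight element, $\lambda$ is $K$-dominant and satisfies $\inner{\lambda}{\alpha^\vee}\le 1$ for all $\alpha\in\Phi_K^+$. This means $B_K(\lambda)$ is minuscule on each simple factor of $\g_K$.

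The only real subtlety is this last point: for semisimple (non-simple) $\g_K$, "minuscule" should be read as a tensor product of minuscule or trivial representations of the simple factors. This follows because the coroot condition holds factor by factor. One can also see it directly without representation theory: all $i$-strings in $B(\clfw_r)$ have length at most $1$, since $\varphi_i+\varepsilon_i$ is bounded by the same pairing argument applied to the extremal weights. Hence $W_K$ acts transitively on each component, giving the equivalent characterization.
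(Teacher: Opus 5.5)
Your proof is correct and is essentially the argument the paper has in mind. The paper gives no separate proof and presents the statement as a direct consequence of the transitive Weyl group action (equivalently $\inner{\clfw_r}{\alpha^\vee} \le 1$ for $\alpha \in \Phi^+$), which is exactly what you unpack via $\inner{w\clfw_r}{\alpha^\vee} = \inner{\clfw_r}{w^{-1}\alpha^\vee}$ for $\alpha \in \Phi_K^+$. Your reading of ``minuscule'' for semisimple $\g_K$ as factor-by-factor, with trivial factors allowed, also matches how the paper later uses the proposition in Lemma~\ref{lemma:decomp_by_weight}.
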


\begin{rem}
\label{rem:crystal_poset}
An irreducible highest weight crystal $B(\lambda)$ is naturally a connected ranked poset with the Hasse diagram given by the crystal graph after forgetting the edge labels.
The rank function is given by the depth, or the sum of the coefficients of $\lambda - \wt(b) \in Q^+$ expressed in terms of the simple roots.
If $\lambda = \clfw_r$ is a minuscule weight, then this poset corresponds to (strong) Bruhat order on $W^J$.
In general, highest weight crystals are not lattices; for example, $B(2\clfw_2)$ in type $A_3$ or $B(\clfw_4)$ in type $E_6$.
\end{rem}

Recall that linear extensions in the minuscule poset $\mcP_r$ corresponds to reduced expressions of $w^J$, which also correspond to maximal chains in $B(\clfw_r)$.
Therefore, we have that $B(\clfw_r)$, when considered as a poset by Remark~\ref{rem:crystal_poset}, is $\mcJ(\mcP_r)$.
This can also be seen directly with elements of $W^J$ being precisely order ideals in $\mcP_r$ using the heap $(\mcP_r, \pi)$.
More precisely, given a linear extension of an order ideal $(p_1, p_2, \dotsc, p_{\ell}) \subseteq \mcP_r$, we can form a reduced word for an element in $W^J$ by $s_{\pi(p_{\ell})} s_{\pi(p_2)} \dotsm s_{\pi(p_1)}$.
We leave it as an exercise to the reader to describe the crystal structure explicitly, but essentially it is given by $\ft_i$ tries to add an element along $\pi^{-1}(i)$ if possible to the order ideal.
%(A precise description is given below in a slightly different language.)
In particular, the minuscule crystals in type $E_{6,7}$ were constructed using heaps by Green~\cite{Green07,Green08}.
Thus, we can construct $\mcP_r$ as the join-irreducible elements of $B(\clfw_r)$ using the label of the incoming crystal edge to describe the surjection $\pi$ to construct the heap.
This process is also clearly reversible.

Next, we want to extend this crystal structure to $B(s\clfw_r)$.
We will do so by using the set $\mcR^{r,s}$ of minuscule RPPs for $\mcP_r$ with the entries being at most $s$; that is, all order reversing functions $\rho \colon \mcP_r \to \{0, 1, \dotsc, s\}$.
We can translate this into order ideals by mapping $\rho \in \mcR^{r,s}$ to a weak chain of order ideals $\mcI_1 \subseteq \mcI_2 \subseteq \cdots \subseteq \mcI_s$ by $p \in \mcI_{s-j}$ if and only if $j \leq \rho(p)$.
Note that this chain does not need to be maximal, and we may have $\mcI_{s-j} = \emptyset$.
It is clear this is bijective.
In other words, minuscule RPPs on $\mcP_r$ are in bijection with weak chains in the poset $\mcJ(\mcP_r)$.
Therefore, we can use the following result to describe the crystal structure.

\begin{prop}[{\cite[Prop.~7.29]{Scrimshaw20}}]
\label{prop:only_comparable}
Let $r$ be a minuscule node. Then we have
\[
\{ b_1 \otimes \cdots \otimes b_s \in B(\clfw_r)^{\otimes s} \mid b_1 \leq \cdots \leq b_s \} \iso B(s\clfw_r).
\]
\end{prop}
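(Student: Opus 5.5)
We have to prove Proposition~\ref{prop:only_comparable}: the set $S$ of tensors $b_1 \otimes \cdots \otimes b_s$ with $b_1 \leq \cdots \leq b_s$ (in the Bruhat order on $W^J$ of Remark~\ref{rem:crystal_poset}) is a subcrystal of $B(\clfw_r)^{\otimes s}$ isomorphic to $B(s\clfw_r)$.

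\emph{Closure under the crystal operators.} First I will show that $S \sqcup \{\zero\}$ is stable under every $\et_i$ and $\ft_i$. By Proposition~\ref{prop:minuscule_Decomposition} with $K=\{i\}$, every $i$-string in $B(\clfw_r)$ has length at most one. So each $b_k$ is $i$-highest, $i$-lowest, or both, and $\varepsilon_i(b_k),\varphi_i(b_k)\in\{0,1\}$. For a minuscule element, $\ft_i b = s_i b > b$ holds exactly when $\varphi_i(b)=1$. Under the tensor rule (the convention of~\cite{BS17}), $\ft_i$ acts on the factor picked out by the signature bracketing of the sequence of $\pm$ signs coming from the factors.

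Now suppose $\ft_i$ changes $b_k$ to $s_i b_k$. The bracketing rule then says the following about the neighbours: each factor on the appropriate side that could also be moved is cancelled. Combining this with the Bruhat property of the simple reflection gives $s_i b_k \leq b_{k+1}$. The key lemma is: if $b \leq b'$, $s_i b > b$ and $s_i b' > b'$, then $s_i b \leq s_i b'$ (the lifting property); and if $s_i b > b$ but $s_i b' \leq b'$, then $s_i b \leq b'$ (the $Z$-property of Deodhar). I expect this to be the main obstacle. One has to check, case by case on the signs of $b_{k-1}, b_k, b_{k+1}$, that the factor chosen by the bracketing is exactly one for which these lifting statements preserve the chain condition. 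In particular one must rule out that $b_{k+1}$ equals $b_k$ and is unmovable: the case $b_k = b_{k+1}$ forces equal signs, so the bracketing acts on the correct end of a block of equal elements. The argument for $\et_i$ is dual, using reversal of the order via $w_0$.

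\emph{Identifying the crystal.} The element $u_{\clfw_r}^{\otimes s}$ lies in $S$ and has weight $s\clfw_r$. The component of $B(\clfw_r)^{\otimes s}$ generated by it is $B(s\clfw_r)$, and by the closure step it lies inside $S$. It then suffices to show that $S$ is connected, or equivalently that the only highest weight element of $S$ is $u_{\clfw_r}^{\otimes s}$.

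Let $b_1 \otimes \cdots \otimes b_s$ be highest weight. I will show by induction that every factor equals $u_{\clfw_r}$. The tensor rule forces the extremal factor to be highest weight, so that factor is $u_{\clfw_r}$. Since the chain is monotone and $u_{\clfw_r}$ is the minimum, a $b_k$ at the other end can only fail to be highest weight if some $\et_i$ acts on it nontrivially. Such an action cannot be bracketed by the factors equal to $u_{\clfw_r}$, since those have $\varphi_i = 0$ for $i \neq r$ and contribute only $\varphi_r=1$. Treating $r$ separately, as the only index with $\varphi_i(u)\neq 0$, finishes the induction.

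An alternative for this step is a cardinality check. One shows that $|S|$, the number of weak chains of length $s$ in $\mcJ(\mcP_r)$, equals $\dim V(s\clfw_r)$ by Proctor's hook-content formula for minuscule posets~\cite{Proctor84}. Together with the closure step and $B(s\clfw_r)\subseteq S$, this gives the conclusion.
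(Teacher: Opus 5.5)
Your argument is correct in substance, but it is not the paper's route. The paper gives no proof of its own: it cites \cite[Prop.~7.29]{Scrimshaw20} and notes, after \cite{DEKMF22}, that the statement also follows from Littelmann's path model \cite{L94}. In that route, $\inner{\clfw_r}{\beta^{\vee}} \leq 1$ for all $\beta \in \Phi^+$, so the $a$-chain condition for LS paths of shape $s\clfw_r$ reduces to $a \in \frac{1}{s}\ZZ$. Hence these LS paths are exactly the weak chains of length $s$ in $W^J$, and concatenation of paths realizes the tensor product. The result is then immediate, but it rests on the full path-model machinery, including its identification with Kashiwara's crystal and a matching of tensor conventions. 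Your route is elementary and self-contained. It uses only the signature rule, the fact that $i$-strings in $B(\clfw_r)$ have length at most one, Deodhar's lifting property together with its parabolic version, and uniqueness of highest weight elements in components. Your cardinality variant replaces the $\et_i$-closure and the highest weight analysis with Proctor's enumeration \cite{Proctor84}, which is a deeper external input.

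Two steps need tightening.

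\emph{Closure step.} What you need is $\varphi_i(b_{k+1}) = 0$, not merely that $b_{k+1}$'s sign is cancelled. Each factor carries at most one sign, and $b_k, b_{k+1}$ are adjacent. So a $+$ at $b_{k+1}$ would be unbracketed along with the one at $b_k$, contradicting that $\ft_i$ acts at the rightmost unbracketed $+$. Once this is established, the $Z$-property handles the case $\varepsilon_i(b_{k+1}) = 1$. The case $s_i b_{k+1} W_J = b_{k+1} W_J$ goes as follows: lift to $W$, where $s_i b_{k+1} > b_{k+1}$ and so $s_i b_k \leq s_i b_{k+1}$, then project back to $W^J$.

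\emph{Highest weight step.} More importantly, your highest weight paragraph forces the wrong end of the chain. In the convention of \cite{BS17} used here, $b \otimes c$ is highest weight only if $c$ is. So it is $b_s$, the \emph{maximum} of the chain, that must equal $u_{\clfw_r}$. Since $u_{\clfw_r}$ is the minimum, $b_1 = \cdots = b_s = u_{\clfw_r}$ follows at once, with no induction. The induction you sketch from the minimal end, with $r$ ``treated separately,'' cannot be completed. In Kashiwara's convention, $u_{\clfw_r} \otimes \ft_r u_{\clfw_r}$ is a highest weight element satisfying your chain condition. This is exactly why the statement depends on the tensor convention.
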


On the other hand, we can directly and explicitly define this crystal structure on the set of RPPs of $\mcP_r$ using the minuscule heap $(\mcP_r, \pi)$ as follows.
Define $\ft_i$ (resp.\ $\et_i$) as the operation that adds (resp.\ subtracts) $1$ from the largest entry from $\rho$ in $\pi^{-1}(i)$ where possible and being $\zero$ if no such move is possible.
We can think of $\ft_i$ as dropping a box down $\pi^{-1}(i)$ and $\et_i$ as dropping an ``anti-box.''
However, we need impose bracketing rule, where we read bottom-to-top along $\pi^{-1}(i)$ contributing `$\bclose$' for each addable box and then `$\bopen$' for each removable box (in that order).

While this paper was being written, this crystal structure was independently given in~\cite[Thm~1.2]{DEKMF22}, where it was remarked~\cite[Rem.~2.30]{DEKMF22} that Proposition~\ref{prop:only_comparable} can also be derived from~\cite{L94}.

\begin{prop}
\label{prop:direct_limit_finite_type}
Let $J = I_0 \setminus \{r\}$, where $r$ is a minuscule node.
We have $\mcR^{r,s} \iso B(s\clfw_r)$.
Moreover, this gives a directed system defined by $s \mapsto s + k$ for any $s, k \in \ZZ_{\geq 0}$ and the direct limit is isomorphic to $\B^J$.
\end{prop}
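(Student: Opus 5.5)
We prove the two claims in turn. The first is $\mcR^{r,s} \iso B(s\clfw_r)$; the second is that these crystals form a directed system whose limit is $\B^J$.

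\emph{The isomorphism $\mcR^{r,s} \iso B(s\clfw_r)$.} We transport the structure from Proposition~\ref{prop:only_comparable} through the bijection between $\mcR^{r,s}$ and weak chains $\mcI_1 \subseteq \cdots \subseteq \mcI_s$ in $\mcJ(\mcP_r)$.

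1. Identify $B(\clfw_r)$ with $\mcJ(\mcP_r)$, as explained above. The crystal order on $B(\clfw_r)$ is then containment of order ideals.
2. Send a weak chain to $\mcI_1 \otimes \cdots \otimes \mcI_s$. This is a bijection onto the subset of $B(\clfw_r)^{\otimes s}$ described in Proposition~\ref{prop:only_comparable}.
3. Check that the tensor product rule on this subset agrees with the box-dropping rule defined on $\rho$.

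For step 3, recall that each minuscule crystal $B(\clfw_r)$ has $i$-strings of length at most $1$ (Proposition~\ref{prop:minuscule_Decomposition} with $K=\{i\}$). So for each factor $\mcI_j$ we have $\varphi_i(\mcI_j) = 1$ exactly when an element of $\pi^{-1}(i)$ is addable to $\mcI_j$, and $\varepsilon_i(\mcI_j) = 1$ exactly when one is removable. The tensor product rule (in the convention of~\cite{BS17}) is then the usual signature rule on a word in $\{\bopen,\bclose\}$ read over the factors. Two observations make this match the RPP rule:

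(a) Because the chain is weakly increasing, the factors in which a given element $p \in \pi^{-1}(i)$ is addable (respectively removable) form a contiguous block of factors. Hence the factor word reorganizes into the reading bottom-to-top along $\pi^{-1}(i)$ used in the RPP bracketing rule, with each element of $\pi^{-1}(i)$ contributing its addable $\bclose$'s before its removable $\bopen$'s. Since $\pi^{-1}(i)$ is totally ordered by (H1), this reading is well defined.

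(b) Acting on the leftmost or rightmost unbracketed factor within such a block changes $\rho(p)$ by $\pm 1$ exactly as prescribed. It also keeps the chain weakly increasing, so the operators preserve the subset of Proposition~\ref{prop:only_comparable}.

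I expect the main obstacle to be this bookkeeping: matching the convention of \cite{BS17} to the bottom-to-top ordering, and showing that cancellation across different elements of $\pi^{-1}(i)$ is consistent. Bracketing within a single block is trivial, so the real content is the claim that elements of $\pi^{-1}(i)$ lower in the heap are addable only in later factors. This should follow from (H2) and the weak chain condition.

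\emph{The directed system and its limit.} Define $\mcR^{r,s} \to \mcR^{r,s+k}$ by $\rho \mapsto \rho$, viewed with the larger bound. On chains this prepends $k$ empty ideals, $\emptyset \otimes \cdots \otimes \emptyset \otimes \mcI_1 \otimes \cdots$, where $\emptyset$ is the highest weight element of $B(\clfw_r)$. The highest weight element $\rho=0$ goes to $\rho=0$, so after the weight shift by $T_{-s\clfw_r}$ this is Kashiwara's embedding $\psi_{s\clfw_r,k\clfw_r}$ from~\eqref{eq:direct_limit_construction}. It is compatible with $\et_i$ because the extra empty factors contribute only unmatched $\bclose$'s at the start of the word, which never affect $\et_i$. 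The maps compose correctly, so we obtain a directed system.

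The direct limit is the set of all order-reversing maps $\rho\colon\mcP_r\to\ZZ_{\ge0}$ with weight $-\sum_p \rho(p)\alpha_{\pi(p)}$ and the same box-dropping rule. To identify this limit with $\B^J$, note that $\B^J$ is by definition the union (direct limit) of the images of $T_{-s\clfw_r}\otimes B(s\clfw_r)$ in $B(\infty)$ as $s \to \infty$. The directed system just constructed is exactly the restriction of~\eqref{eq:direct_limit_construction} to the ray $s\clfw_r$, so its limit is $\B^J$. If $\B^J$ is instead defined through Lusztig data supported on $\Phi^J$, we would additionally use the final paragraph of Section~\ref{sec:crystals}: this gives the support statement, and matching cardinalities weight by weight completes the identification.
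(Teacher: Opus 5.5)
Your proposal is correct and is the paper's argument written out in full. The paper's one-line proof amounts to transporting Proposition~\ref{prop:only_comparable} through the weak-chain bijection and observing that the operators do not depend on $s$. In your version, the prepended copies of $u_{\clfw_r}=\emptyset$ only add unmatched $\bclose$'s for $i=r$, so the $\et_i$ are unchanged. The limit is $\B^J$ simply because $\B^J=\bigcup_s \B^{J,s}$ by~\eqref{eq:g0_decomp}, so your weight-by-weight cardinality fallback is unnecessary.

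One orientation fix is needed in your bookkeeping. Take consecutive $p<p'$ in $\pi^{-1}(i)$; reducedness of $w^J$ and (H2) give some $q$ with $\pi(q)\sim i$ and $p<q<p'$. Then $p$ can be added to or removed from $\mcI_j$ only if $q\notin\mcI_j$, while $p'$ can be only if $q\in\mcI_j$. So lower elements of $\pi^{-1}(i)$ occupy \emph{earlier} (smaller) factors of $\mcI_1\otimes\cdots\otimes\mcI_s$, not later ones as you wrote, and this is exactly what makes your claim (a) match the bottom-to-top reading.
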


\begin{proof}
This is immediate from the definitions as action of the crystal operators is independent of $s$.
\end{proof}

\begin{prop}
\label{prop:heap_string}
Fix a linear extension $\ii = (i_1, \dotsc, i_N)$ of $\mcP_r$.
For any RPP $\rho \in \mcR^{r,s}$, the entries read according to $\ii$ is the $\ii$-polyhedral data of $\rho$.
\end{prop}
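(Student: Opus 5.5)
Fix the linear extension $(p_1,\dotsc,p_N)$ of $\mcP_r$ with $i_k=\pi(p_k)$, so that $\ii=(i_1,\dotsc,i_N)$ is a reduced expression for $(w^J)^{-1}$. The polyhedral model then gives
\[
\Psi_{\ii}\colon B(s\clfw_r)\hookrightarrow B_{i_1}\otimes\cdots\otimes B_{i_N}\otimes u_{s\clfw_r}.
\]
The plan is to show that the map $\rho\mapsto(\rho(p_1),\dotsc,\rho(p_N))$ is a crystal embedding $\mcR^{r,s}\to B_{i_1}\otimes\cdots\otimes B_{i_N}$ that sends the empty RPP (all entries $0$) to $(0,\dotsc,0)$. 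Since $\mcR^{r,s}\iso B(s\clfw_r)$ is connected (Proposition~\ref{prop:direct_limit_finite_type}) and an embedding of a connected highest weight crystal is determined by the image of its highest weight element, this map must coincide with $\Psi_{\ii}$. That is the claim.

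First I check the weights. An RPP $\rho$ has weight $s\clfw_r-\sum_{p}\rho(p)\alpha_{\pi(p)}$, and the corresponding polyhedral point has weight $-\sum_k k_k\alpha_{i_k}$ (after the shift by $u_{s\clfw_r}$). With $k_k=\rho(p_k)$ these agree.

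The main step is to compare the $f_i$ action. In the polyhedral model, $f_i$ increments the coordinate $k_m$ at which $a_i(m)=k_{N+1-m}+\sum_{j<m}A_{i,i_{N+1-j}}k_{N+1-j}$ is maximized, with the stated tie-breaking. Only the positions with $i_k=i$ or $i_k\sim i$ contribute. By (H1) and (H2), $\pi^{-1}(i)\cup\pi^{-1}(j)$ is a chain for each $j\sim i$, so reading $\ii$ restricted to these labels follows the heap order. I will rewrite the partial sums $a_i(m)$ at the positions $p\in\pi^{-1}(i)$ as telescoping sums of differences between $\rho(p)$ and the values of $\rho$ at the neighbouring elements of $\pi^{-1}(j)$, for $j\sim i$, that lie between consecutive elements of $\pi^{-1}(i)$.

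The minuscule heap structure is what makes this work. Between two consecutive elements of $\pi^{-1}(i)$ there are exactly elements with labels $j\sim i$ whose $-A_{ij}$-weighted count is $2$ (Wildberger/Stembridge). Consequently each difference $a_i(m)-a_i(m')$ counts addable boxes minus removable boxes in the corresponding interval of $\pi^{-1}(i)$. Maximizing $a_i$ is then the same as locating the unmatched `$\bclose$' in the bracketing rule read bottom-to-top along $\pi^{-1}(i)$. The direction of reading is the one point to watch: the tensor factor order is reversed relative to the linear extension, and $a_i$ is indexed from $N$ downward.

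I expect this identification of the tropical maximum with the bracketing rule to be the main obstacle. Getting the interval counts and the tie-breaking conventions to match requires care. If the direct computation becomes messy, I will instead argue via the $\ast$-involution: by \eqref{eq:star_defn} the polyhedral point equals the $\ii$-string data of $b^*$. I would then show that applying $e_{i_1}$ maximally, then $e_{i_2}$, and so on, to the star-dual RPP (using the heap dual poset) removes exactly $\rho(p_k)$ boxes at stage $k$, because $p_k$ is successively a maximal element of what remains.
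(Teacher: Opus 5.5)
\textbf{Verdict.} Your proposal is correct, but it takes a different route from the paper's proof.

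\textbf{How the two routes differ.} The paper inducts on depth. It first treats $s=1$, where $f_i$ adds a box at the single addable element of $\pi^{-1}(i)$ with everything below it filled, so the move visibly agrees with the polyhedral rule. For general $s$ it passes through $B(s\clfw_r)\subseteq B(\clfw_r)^{\otimes s}$ (Proposition~\ref{prop:only_comparable}). There it identifies the RPP bracketing with the tensor product signature rule and appeals to compatibility of the polyhedral rule with entrywise addition over the chain of order ideals. It gets independence of the linear extension from full commutativity.

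You instead compare the Nakashima--Zelevinsky tropical maximum with the RPP bracketing directly, for every $s$ at once, using only the local structure of the minuscule heap. You then close with uniqueness of the embedding of the connected crystal $\mcR^{r,s}\iso B(s\clfw_r)$. This is effectively the same induction on depth, since you only need $f_i$ to agree when $f_i\rho\neq\zero$. Independence of the linear extension comes for free: $a_i$ at $q\in\pi^{-1}(i)$ only sees elements of $\pi^{-1}(i)\cup\bigcup_{j\sim i}\pi^{-1}(j)$ above $q$, and these are comparable to $q$ by (H1)--(H2).

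The paper's argument is shorter because it recycles the $s=1$ case through the tensor product, but its additivity step is only sketched. Yours is self-contained and makes the role of minusculeness explicit.

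\textbf{What your main step needs.} Write $\pi^{-1}(i)=\{q_1<\cdots<q_t\}$, let $\sigma(q_a)$ be the $a_i$-value at $q_a$, and let $\operatorname{add}(q)$ and $\operatorname{rem}(q)$ be the numbers of addable and removable boxes at $q$. The precise identity is $\sigma(q_a)-\sigma(q_{a+1})=\operatorname{rem}(q_a)-\operatorname{add}(q_{a+1})$.

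Besides the weighted count $2$, this uses the shape of the neighbours strictly between $q_a$ and $q_{a+1}$. They are either two incomparable elements, a two-element chain, or a single element of weight $2$. In each case $\max_{x\gtrdot q_a}\rho(x)+\min_{y\lessdot q_{a+1}}\rho(y)$ equals their weighted sum.

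You also need an end condition. Let $r^*$ be defined by $\clfw_{r^*}=-w_0\clfw_r$. Then the weighted count of neighbours above $q_t$ is $1-\delta_{i,r^*}$, which gives $\sigma(q_t)=\operatorname{rem}(q_t)$. This, like the count $2$, follows from $\langle s_{i_{k-1}}\cdots s_{i_1}\clfw_r,\alpha_{i_k}^\vee\rangle=1$ at every step.

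With these, $\sigma(q_a)$ is the excess of `\bopen' over `\bclose' in the suffix of the bracket word that starts at the removable boxes of $q_a$. Hence $\max_a\sigma(q_a)=\varepsilon_i$. The lowest maximizer, which is the polyhedral tie-break for $f_i$, is the element carrying the topmost unmatched `\bclose'.

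\textbf{The $\ast$-fallback.} It is not needed. As phrased it would need care anyway: $b^*$ generally does not lie in $B(s\clfw_r)$, so there is no evident ``star-dual RPP''.
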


\begin{proof}
Since all reduced expressions for $w^J = \cosetrepr{w_0}$ are fully-commutative, the $\ii$-polyhedral data for any $\ii \in R((w^J)^{-1})$ only changes by reordering according to the choice of linear extension~\cite{Nakashima99,NZ97}.
Now suppose the claim holds for all elements of depth $k$.
We first show this for the case $s = 1$, where we simply are adding a box to the largest possible element $j \in \pi^{-1}(i)$.
This means every entry below $j$ must contain a box, so the claim follows from the polyhedral realization.
For general $s$, then our bracketing rule on the RRPs corresponds to the tensor product bracketing rule, which is a translation of the polyhedral model's crystal structure on the entry-wise addition (\textit{cf.}~\cite[Rem.~2.1.2]{KN94}).
\end{proof}

\begin{ex}
Consider type $A_4$ with $r = 2$, and set $\ii = (2,3,4,1,2,3)$.
We note that for all $s \geq 3$, the element $f_1^2 f_4 f_3^2 f_2^3 u_{s\Lambda_2}$ corresponds to the RPP
\begin{align*}
\begin{array}{cccc}
& & 0 \\
& 0 & & 1 \\
2 & & 2 \\
& 3
\end{array}
\!\in \mcR^{2,s}
\longleftrightarrow
 b_2(-3) \otimes b_3(-2) \otimes b_4(-1) \otimes b_1(-2) \otimes b_2(0) \otimes b_3(0) \otimes u_{\infty}.
\end{align*}
The abbreviation of the element in the $\ii$-polyhedral model on the right is $(3,2,1,2,0,0)$.
Then the corresponding $2$-strings through these elements are
\[
\begin{array}{c@{\,{\color{red}\xrightarrow[\hspace{15pt}]{2}}\,}c@{\,{\color{red}\xrightarrow[\hspace{15pt}]{2}}\,}c@{\,{\color{red}\xrightarrow[\hspace{15pt}]{2}}\,}c@{\,{\color{red}\xrightarrow[\hspace{15pt}]{2}}\,}c}
\begin{array}{cccc}
& & 0 \\
& 0 & & 1 \\
2 & & 2 \\
& 3
\end{array}
&
\begin{array}{cccc}
& & 0 \\
& 1 & & 1 \\
2 & & 2 \\
& 3
\end{array}
&
\begin{array}{cccc}
& & 0 \\
& 1 & & 1 \\
2 & & 2 \\
& 4
\end{array}
&
\begin{array}{cccc}
& & 0 \\
& 1 & & 1 \\
2 & & 2 \\
& 5
\end{array}
&
\cdots,
\\
(3,2,1,2,0,0)
&
(3,2,1,2,1,0)
&
(4,3,1,2,1,0)
&
(5,3,1,2,1,0)
&
\cdots.
\end{array}
\]
Consider the two values corresponding to $\pi_i^{-1}(2)$, and note that we do not make the top such entry into a $2$ in this $2$-string because of the pairing between an addable box at the top with a removable box at the bottom.
\end{ex}

Recall that in type $A_{n-1}$, the $\ii$-polyhedral data and $\ii$-Lusztig data can be naturally arranged in rectangular $r \times (n-r)$ matrices with the upper left corner representing $i_1 = r$ and $\beta_1 = \alpha_r$, respectively.
As such, we can apply the Robinson--Schensted--Knuth (RSK) bijection to the $\ii$-Lusztig data, where the output pair of semistandard tableaux gives an $r \times (n-r)$ matrix by joining their Gelfand--Tsetlin descriptions such that it matches the corresponding RPP.
See below for an example, and we refer the reader to, \textit{e.g.},~\cite[Sec.~4]{CITI} for the general details.

\begin{ex}
\newcommand{\tge}{\scalebox{.5}{$\geq$}}%
Consider type $A_5$ (or $\g = \mathfrak{sl}_6$), $r = 2$, and $\ii = (2, 1, 3, 4, 5, 2, 3, 4)$, then the root order for the $\ii$-Lusztig data is
\[
\alpha_2, \quad \alpha_{12}, \quad \alpha_{23}, \quad \alpha_{24}, \quad \alpha_{25}, \quad \alpha_{13}, \quad \alpha_{14}, \quad \alpha_{15},
\]
where $\alpha_{ij} = \alpha_i + \cdots + \alpha_j$.
Then, the RSK bijection maps the $\ii$-Lusztig data to the RPP (equivalently $\ii$-polyhedral data) as follows:
\begin{align*}
\begin{bmatrix}
c_1 & c_3 & c_4 & c_5 \\
c_2 & c_6 & c_7 & c_8
\end{bmatrix}
\xrightarrow{RSK} (P, Q)
& \longleftrightarrow
\begin{array}{cccccccc}
i_1 &\tge& i_6 &\tge& 0 && 0 \\
& i_3 &\tge& i_7 &\tge& 0 \\
&& i_4 &\tge& i_8 \\
&&& i_5
\end{array},
\begin{array}{ccc}
i_1 &\tge& i_6 \\
& i_2
\end{array}
\\ & \longleftrightarrow
\begin{bmatrix}
i_1 & i_3 & i_4 & i_5 \\
i_2 & i_6 & i_7 & i_8
\end{bmatrix}
\longleftrightarrow
\begin{tikzpicture}[scale=.5,baseline=15]
\foreach \i/\x\y in {1/0/0,2/-1/1,3/1/1,4/2/2,5/3/3,6/0/2,7/1/3,8/2/4}
  \node (i\i) at (\x,\y) {$i_{\i}$};
\foreach \i in {1,2,3,4,5}
  \node (b\i) at (\i-2,-1) {$\alpha_{\i}$};
\draw[-] (i1) -- (i3) -- (i4) -- (i5) -- (i8);
\draw[-] (i1) -- (i2) -- (i6) -- (i7) -- (i8);
\draw[-] (i3) -- (i6);
\draw[-] (i4) -- (i7);
\draw[blue,dotted] (b1) -- (i2);
\draw[blue,dotted] (b2) -- (i1) -- (i6);
\draw[blue,dotted] (b3) -- (i3) -- (i7);
\draw[blue,dotted] (b4) -- (i4) -- (i8);
\draw[blue,dotted] (b5) -- (i5);
\end{tikzpicture}
\end{align*}
\end{ex}

\begin{cor} \label{cor: RSK between Lusztig and string}
Let $\g$ be of type $A_{n-1}$, and fix some $r \in I_0$.
For any $\ii \in R\bigl((w^J)^{-1}\bigr)$, the RSK bijection is the $U_q(\g)$-crystal isomorphism between the $\ii$-polyhedral data and $\ii$-Lusztig data for $B(s\clfw_r)$.
\end{cor}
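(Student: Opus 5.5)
Since both the $\ii$-polyhedral data and the $\ii$-Lusztig data for $B(s\clfw_r)$ sit inside $\ZZ_{\geq 0}^{r(n-r)}$ as parameterizations of the same crystal, there is a unique $U_q(\g)$-crystal isomorphism between them. It sends highest weight to highest weight and is then forced by connectedness. So it suffices to show that RSK, as described in the preceding example, is a well-defined bijection between the two sets of data that intertwines $\ft_i$ and $\et_i$ for all $i \in I_0$. The plan is to reduce this to known crystal-theoretic properties of RSK.

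First I would fix a convenient $\ii$. Every reduced expression of $w^J$ is fully commutative (Remark~\ref{rem: w^J}). By Proposition~\ref{prop:heap_string}, changing $\ii$ only permutes the polyhedral data according to the linear extension. On the Lusztig side, a $2$-term braid move merely swaps the positions of two orthogonal roots, because the piecewise-linear transition maps of~\cite{Saito94,BZ01} are trivial for commutation moves. Both sides are indexed by the $r \times (n-r)$ rectangle of roots of $\Phi^J$, so both permutations are the same relabeling of the rectangle. Hence it suffices to treat one $\ii$, namely the row-reading word used in the example.

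Second, I would identify each side with a tableau model.
\begin{itemize}
\item By Proposition~\ref{prop:direct_limit_finite_type} and Proposition~\ref{prop:heap_string}, the polyhedral data is the minuscule RPP on $\mcP_r$, the $r \times (n-r)$ rectangle poset. Under the Gelfand--Tsetlin correspondence, an RPP on this poset is a pair of GT patterns glued along the shape, which is exactly the $(P,Q)$ output of RSK. The crystal structure (adding a box along $\pi^{-1}(i)$ with the bracketing rule) then becomes the usual crystal structure on the GT pattern of $P$; equivalently, it comes from Proposition~\ref{prop:only_comparable} via column reading.
\item On the Lusztig side, I would use the simply-braided description~\cite[Thm.~4.5]{SST18} (\textit{cf.}~\cite[Prop.~3.2]{JK19}). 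For this $\ii$, $\ft_i$ acts on the matrix $[c_{ab}]$ by a signature rule on the rows/columns indexed by $i$. This is the standard crystal structure on $\ZZ_{\geq0}$-matrices, viewed as biwords, acting on one of the two letters.
\end{itemize}

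Third, I would invoke the classical fact that RSK on matrices is a bicrystal isomorphism. Crystal operators acting on the row letters of the biword correspond to crystal operators on $P$ and fix $Q$; this follows from Knuth equivalence being compatible with crystal operators, as in Lascoux--Schützenberger or via~\cite{CITI}. Applying this, RSK intertwines $\ft_i$ on Lusztig data with $\ft_i$ on the RPP, and the corollary follows.

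The main obstacle will be the indexing bookkeeping. I must verify that the signature rule from~\cite{SST18} on Lusztig data matches the biword crystal on the correct side (rows versus columns, with the correct reading order). I must also check that the GT gluing of $(P,Q)$ lines up with the heap labels $\pi$ on the RPP. I would check both on the $A_5$, $r=2$ example, and then argue in general by induction on $n$ using the nice decomposition, removing one column of the rectangle at a time.
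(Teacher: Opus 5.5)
Your reduction to a single $\ii$ and your identification of the $\ii$-polyhedral data with RPPs via Proposition~\ref{prop:heap_string} are fine. The gap is at the node $r$ itself.

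The fact you invoke in the third step, that RSK is a bicrystal isomorphism, only concerns operators acting on the row letters or on the column letters of the biword. It therefore gives compatibility with $\ft_i,\et_i$ only for $i\in J=I_0\setminus\{r\}$, i.e.\ a $U_q(\g_J)$-crystal isomorphism with $\g_J\iso\mathfrak{sl}_r\oplus\mathfrak{sl}_{n-r}$. The operator $\ft_r$ is not of this form:
\begin{enumerate}
\item On the Lusztig side, every $\ii\in R\bigl((w^J)^{-1}\bigr)$ begins with $r$, so $\beta_1=\alpha_r$. Hence $\ft_r$ simply adds $1$ to the corner entry $c_{\alpha_r}$ (or gives $\zero$ if this violates $\varepsilon_r^*\le s$). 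This raises the total sum of the matrix, and hence $\abs{\lambda}$ for the common RSK shape $\lambda$.
\item On the RPP side, the entries on $\pi^{-1}(r)$ are exactly the parts of $\lambda$, i.e.\ the diagonal along which the two Gelfand--Tsetlin patterns are glued. Here $\ft_r$ adds a box by the bracketing rule.
\end{enumerate}
So $\ft_r$ changes $P$ and $Q$ simultaneously, and its compatibility with RSK is a separate statement that the bicrystal property does not contain. For the same reason your first bullet is inaccurate. The RPP crystal is not the crystal structure on the pattern of $P$ alone: the $\ft_i$ with $i<r$ act on one pattern, those with $i>r$ on the other, and $\ft_r$ acts on the shared shape. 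The index bookkeeping you flag is therefore not the real obstacle.

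The paper handles this by matching the PBW crystal directly with the crystal structure on $r\times(n-r)$ matrices from \cite{Kwon18}, which already includes the operator at $r$, and then applying Proposition~\ref{prop:heap_string}.

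Within your framework there is also a short repair. By Lemma~\ref{lemma:decomp_by_weight}, the $J$-highest weight elements of $B(s\clfw_r)$ are determined by their weights. (In type $A$ this is also visible from RSK itself, which splits the matrices over $\g_J$ into components indexed by the shape $\lambda$, each occurring once with a distinct weight.) Hence a $U_q(\g_J)$-crystal isomorphism between two realizations of $B(s\clfw_r)$ is unique. It must then coincide with the restriction of the unique $U_q(\g)$-isomorphism from your first paragraph, and compatibility with $\ft_r,\et_r$ comes for free.

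For this you still need RSK to map the Lusztig data of $B(s\clfw_r)$, namely the $\cc$ with $\varepsilon_r^*(\cc)\le s$, onto $\mcR^{r,s}$. Equivalently, you need $\lambda_1=\varepsilon_r^*(\cc)$. This follows from Schensted's theorem (the first part of the RSK shape is the maximal weight of a monotone path in the matrix) combined with Theorem~\ref{thm:path formula}. Alternatively, you can argue in the limit $s\to\infty$ via Proposition~\ref{prop:direct_limit_finite_type}. As written, the proposal contains neither this argument nor a direct verification for $\ft_r$.
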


\begin{proof}
The minuscule heap, which is the restriction of the root poset to $\Phi^{J+}$, is a product of two chains of length $r$ and $n-r$, we can naturally identify the entries of a $r \times (n-r)$ matrix with $\Phi^{J+}$.
In particular, we associate the top-left entry of the matrix with $\alpha_r$.
A straightforward computation shows the PBW crystal associated to $(\cosetrepr{w_0})^{-1} w_{J'}$ with $J'$ being the image of $J$ under the natural order $2$ automorphism on the Dynkin diagram of type $A_n$, corresponds to the crystal structure given by~\cite{Kwon18}. Hence, the claim follows from Proposition~\ref{prop:heap_string}. 
\end{proof}

There is an explicit crystal isomorphism between RPPs and semistandard tableaux of rectangle shape for type $A_n$.
We rotate the RPP 90 degrees counterclockwise and  replace each entry $x$ by $s - x$, then we obtain the tableau by considering the result as a Gelfand--Tsetlin pattern using the canonical bijection.
This was described in~\cite{DEKMF22}.
Alternatively, we can read the RPP along diagonals $(d_r, \dotsc, d_1)$ starting from the bottom and going to the northeast.
We will denote the entries in the $k$-th diagonal by $d_k = (d_{k,k}, d_{k,k+1}, \dotsc, d_{k,n+1-k})$.
The number of $i$'s in row $k$ is equal to $s - \sum_{j=k}^i d_{k,j}$ (note that necessarily for $i < k$, there are no such $i$'s).
From this, we see the bracketing rule is precisely the bracketing rule from semistandard tableaux; see also~\cite[Prop.~3.12]{CITI} for Proposition~\ref{prop:heap_string}.
Moreover, certain string data can be extracted (see, \textit{e.g.},~\cite[Prop.~2.3.3]{WY21}).

%%%%%%%%%%%%%%%%%%%%%%%%%%%%%%%%%%%%%%%%
\section{Uniform construction of special KR crystals} \label{sec: construction of special KRs}

\subsection{Kirillov--Reshetikhin crystals}
An important class of finite dimensional representations of $U_q'(\aff{\g})$ are the Kirillov--Reshetikhin representations $W^{r,s}$, where $r \in I_0$ and $s \in \ZZ_{>0}$, which are classified by their Drinfel'd polynomials (\textit{e.g.},~see \cite[Sec.~6.9]{ChHe10}).
The $U_q'(\gaff)$-module $W^{r,s}$ is conjectured~\cite{HKOTY99,HKOTT02} to have a corresponding crystal\footnote{Strictly speaking, these are crystal \emph{pseudo}bases, which satisfy the crystal basis axioms up to a negligible sign.} called a \defn{Kirillov--Reshetikhin crystal} (KR crystal for short) and denoted by $B^{r,s}$.
This conjecture has been proven in most cases~\cite{BS20,KKMMNN92,Okado07,OS08,NS21}.
In particular, one important property when $r$ is a special node is that $B^{r,s} \iso B(s\clfw_r)$ as $U_q(\g)$-crystals for all $s$ (\textit{e.g.}~see \cite{Chari01}), which yields a uniform proof of their existence from~\cite{KKMMNN92}.
If $r$ is special node, then we refer to $B^{r, s}$ as a \defn{special KR crystal}.

%===================
\subsection{Construction of special KR crystals}

As before, take $J = I_0 \setminus \{r\}$ for some special node $r$.
Take a reduced expression $\ii_0$ of $w_0$ such that $\ii_0 = \ii^J \cdot \ii_J$, where $\ii^J \in R((w^J)^{-1})$.
Then we have 
\[
	\B_{\ii_0} \cong \B_J \otimes \B^J,
\]
where $\B^J$ (resp.~$\B_J$) is the subcrystal of $\B_{\ii_0}$ associated to $\ii^J$ (resp.~$\ii_J$); more precisely, 
\begin{align*}
	\B^J &= \{ \cc = (c_\beta) \in \B_{\ii_0} \, \mid \, c_\beta = 0 \text{ unless } \beta \in (\Phi^J)^+  \}, \\
	\B_J &= \{ \cc = (c_\beta) \in \B_{\ii_0} \, \mid \, c_\beta = 0 \text{ unless } \beta \in \Phi_J^+ \}.
\end{align*}
Note that
\begin{equation} \label{eq:B upper J}
\B^J = \{ \cc \in \B_{\ii_0} \, \mid \, \varepsilon_i^*(b) = 0 \text{ for all } i \in J \}.
\end{equation}

\begin{rem} \label{rem:parabolic Verma}
The crystal $\B^J$ also comes from the crystal basis of a $U_q(\g)$-module.
Let $U_q(\mathfrak{p})$ be the subalgebra of $U_q(\g)$ generated by $U_q(\g_J)$ and the positive half of $U_q(\g)$.
For $\lambda \in P^+$, we denote by $V_J(\lambda)$ the finite-dimensional irreducible $U_q(\g_J)$-module with highest weight $\lambda$.
The \defn{parabolic Verma module} is
\begin{equation*}
	M^J(\lambda) = U_q(\g) \otimes_{U_q(\mathfrak{p})} V_J(\lambda),
\end{equation*}
where we inflate $V_J(\lambda)$ to a $U_q(\mathfrak{p})$-module by having $X v = 0$ for all $X \in \mathfrak{p} \setminus \g_J$.
Note that 
\begin{equation*}
	M^J(\lambda) \iso U_q^-(w^J) \otimes_{\QQ(q)} V_J(\lambda)
\end{equation*}
as $\QQ(q)$-vector spaces.
Here $U_q(w^J)$ is the $\QQ(q)$-subalgebra of the negative half of $U_q(\g)$ generated by PBW vectors associated to $w^J$.
Then it follows from \cite{JKU24} that the $U_q(\g)$-module $M^J(\lambda)$ has a crystal base for classical types.
In particular, $M^J(0) \iso U_q^-(w^J)$ with its crystal graph given by $\B^J$ (\textit{cf.}~\cite{Kwon14}), and hence we also refer to $\B^J$ as the crystal of the parabolic Verma module (of highest weight $0$).
\end{rem}

Following \cite{Kwon13, Kwon18}, we can promote the crystal $\B^J$ to a $U_q(\aff{\g})$-crystal by defining $0$-crystal operators $\et_0$ and $\ft_0$ as follows.
Recall \eqref{eq: def of Theta}.
For $\cc = (c_\beta) \in \B^J$, we define
\begin{subequations}
\label{eq:affine structure}
\begin{align}
	\et_0 \cc &= \cc + \oneb_{\Theta}, \qquad \quad
	\ft_0 \cc =
	\begin{cases}
		\cc - \oneb_{\Theta} & \text{if $c_{\Theta} > 0$,} \\
		\zero & \text{otherwise,}
	\end{cases} \\
	\varphi_0(\cc) &= \max \{ \, k \mid \ft_0^k \cc \neq 0 \, \}, \qquad
	\varepsilon_0(\cc) = \varphi_0(\cc) - \langle \wt(\cc),\, h_0 \rangle,
\end{align}
\end{subequations}
where $\cc \pm \oneb_{\Theta} = (c'_\beta)$ with $c'_\Theta = c_\Theta \pm 1$ and $c'_\beta = c_\beta$ for all $\beta \neq \Theta$.
Let $\aff{\B}^J$ denote this $U_q'(\aff{\g})$-crystal, and define the subcrystal
\[
\aff{\B}^{J,s} := \{ b \in \aff{\B}^J \mid \varepsilon_r^{\ast}(b) \leq s \}.
\]
Note that
\begin{equation}
\label{eq:g0_decomp}
B(s\clfw_r) \iso T_{s\clfw_r} \otimes \aff{\B}^{J,s} = T_{s\clfw_r} \otimes \B^{J,s} \subset T_{s\clfw_r} \otimes \B^J
\end{equation}
as $U_q(\g_0)$-crystals from~\cite[Prop.~8.2]{K95} and~\eqref{eq:B upper J}. 
\smallskip

Now, we are in position to state the main result of this paper.

\begin{thm} \label{thm:main1}
	Let $\aff{\g}$ be an affine Lie algebra.
	Let $r$ be a special node and $J = I_0 \setminus \{r\}$. Then we have
	\[
	T_{s\clfw_r} \otimes \aff{\B}^{J,s} \iso B^{r,s}
	\]
	as $U_q'(\aff{\g})$-crystals.%, where $B^{r,s}$ is the KR crystal with respect to $r$ and $s \in \ZZ_{>0}$.
\end{thm}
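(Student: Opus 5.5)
The plan is to use the reconstruction theorem of~\cite{JS10}. For a regular $U_q'(\aff{\g})$-crystal $B$, it suffices to verify three branching conditions. First, $B \iso B(s\clfw_r)$ as $U_q(\g)$-crystals, where $\g = \aff{\g}_{I_0}$. Second, $B$ has the correct decomposition as a $U_q(\aff{\g}_{I\setminus\{r\}})$-crystal. Third, $B$ has the correct decomposition as a $U_q(\aff{\g}_{I \setminus \{0,r\}})$-crystal (or as a crystal for a third Levi subalgebra whose index set contains $0$). The third condition must be compatible with the first two, since the theorem pins down $B^{r,s}$ uniquely from these data plus a small number of highest weight elements.

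The first condition is~\eqref{eq:g0_decomp}. So the first task is to check that $T_{s\clfw_r} \otimes \aff{\B}^{J,s}$ is a regular abstract $U_q'(\aff{\g})$-crystal. Concretely, $\aff{\B}^{J,s}$ must be stable under $\et_0,\ft_0$, and $\varepsilon_0,\varphi_0$ must equal the lengths of $0$-strings. Since $\et_0$ only changes $c_\Theta$, this reduces to showing that $\varepsilon_r^*(\cc + \oneb_\Theta)$ relates to $\varepsilon_r^*(\cc)$ in a controlled way. I would prove that adding $\oneb_\Theta$ either preserves $\varepsilon_r^*$ or raises it by one, and that the $0$-string through $\cc$ inside $\aff{\B}^{J,s}$ has length $s - \varepsilon_r^*$ plus a term read off from $c_\Theta$.

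For the other two branchings, I would exploit the diagram automorphism $\tau_r$ with $\tau_r(0)=r$, together with Proposition~\ref{prop:twisted_roots}. The identity $w^J(\alpha_i) = \alpha_{\tau_r^{-1}(i)}$ for $i \neq r$ and $w^J(\alpha_r) = -\Theta$ means that conjugating by $w^J$ on Lusztig data (via the root operators $\et_\beta$) converts the $\{0\}\cup J'$ structure into the classical one. I would then reduce to two technical statements:
\begin{itemize}
\item a decomposition of $\aff{\B}^{J,s}$ by the weight (equivalently by $c_\Theta$ and $\varepsilon_r^*$), which is the content of Lemma~\ref{lemma:decomp_by_weight};
\item an explicit list of the $I\setminus\{r\}$-highest weight elements, which is the content of Lemma~\ref{lemma:hw_decomp_elements}.
\end{itemize}
With these, the $I\setminus\{r\}$-crystal is identified with $B(s\clfw_{\tau_r^{-1}(0)})$ transported by $\tau_r$, matching the branching of $B^{r,s}$. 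Finally, I would confirm that the $0$-arrows glue correctly on the $I\setminus\{0,r\}$-highest weight elements.

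The main obstacle is proving the two technical lemmas. For minuscule $r$, I would use Proposition~\ref{prop:only_comparable} together with the RPP model of Propositions~\ref{prop:direct_limit_finite_type} and~\ref{prop:heap_string}. There, $\varepsilon_r^*$ is the largest entry of the RPP (the entry at the minimal element of $\mcP_r$, which is $\alpha_r$), and $c_\Theta$ is controlled by the entry at the maximal element $\Theta$. The comparability chain $b_1\le\cdots\le b_s$ then makes the required statements a bookkeeping check along the heap. For cominuscule but not minuscule $r$ ($r=1$ in $B_n^{(1)}$, $r=n$ in $C_n^{(1)}$), this chain property fails. In those cases I would embed $\aff{\B}^{J,s}$ as a virtual crystal into the dual-type crystal, where $r$ becomes minuscule, and verify that the virtualization map commutes with $\et_0,\ft_0$ and with $\varepsilon_r^*$. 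The lemmas then pull back from the minuscule case.
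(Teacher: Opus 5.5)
Your skeleton is the paper's. You use Theorem~\ref{thm:reconstruction} with $a=0$, $b=r$, take the classical branching from~\eqref{eq:g0_decomp}, and obtain the $I\setminus\{r\}$ branching from $\tau_r$ and Proposition~\ref{prop:twisted_roots}. That last step is Lemma~\ref{lemma:gr_decomp}, where $\ft_0$ is identified with the root operator $\ft_{-\Theta}$ for the conjugated simple system $w^J\Delta$. You also follow the paper in using Proposition~\ref{prop:only_comparable} for minuscule $r$ and virtual crystals for the two cominuscule non-minuscule nodes.

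The gap is in the third condition, whose Levi is forced to be $I\setminus\{0,r\}$, not one containing $0$. What it requires is that the $J$-highest weight elements of $B(s\clfw_r)$, with $J=I_0\setminus\{r\}$, are pairwise distinguished by their weight. Together with the level-zero condition and~\eqref{eq:g0_decomp}, this gives the form $\bigoplus_{\mu\in M}B(\mu_\downarrow)$ that makes the other two isomorphisms glue. You never formulate this statement. Instead you assign the two technical lemmas to other tasks:
\begin{itemize}
\item an explicit list of $I\setminus\{r\}$-highest weight elements, which is not needed, since that branching comes entirely from Proposition~\ref{prop:twisted_roots};
\item a splitting ``equivalently by $c_\Theta$ and $\varepsilon_r^*$'', which is wrong in general. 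The weight of a $J$-highest weight element records how often each $J$-highest weight element of $B(\clfw_r)$ occurs among its tensor factors. For $E_7$ with $r=7$, the crystal $B(\clfw_7)$ has four $E_6$-components, so three independent multiplicities are involved.
\end{itemize}

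The missing argument in the minuscule case has two steps.
\begin{itemize}
\item[(i)] If $b_1\otimes\cdots\otimes b_s\in B(s\clfw_r)$ is $J$-highest weight, then every $b_k$ is (Lemma~\ref{lemma:hw_decomp_elements}). The proof is by induction on $s$ using $b_{s-1}\le b_s$: if a path from $b_{s-1}$ to $b_s$ ended with $\ft_i$ for some $i\in J$, then $\et_i$ would act nontrivially on the last factor.
\item[(ii)] The sets $X_b=\{i\mid\varphi_i(b)=1\}$, over the $J$-highest weight $b\in B(\clfw_r)$, are pairwise disjoint, by Proposition~\ref{prop:minuscule_Decomposition} and Lemma~\ref{lemma:total_order}. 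So the weight counts the occurrences of each such factor. Since these factors are totally ordered and appear in weakly increasing order, the weight then fixes the element.
\end{itemize}
Neither RPP entries nor $c_\Theta$ enter.

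Separately, your direct regularity check is unnecessary, because the $I\setminus\{r\}$ identification already makes the $0$-strings regular. Its key formula is also false. In the paper's $D_3^{(2)}$, $s=2$ example, $1$ and $F_{\alpha_2}^{(2)}$ both have $c_\Theta=0$ and $0$-strings of length $2$, yet $s-\varepsilon_2^*$ equals $2$ and $0$ respectively. So the string length is not $s-\varepsilon_r^*$ plus a function of $c_\Theta$.
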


We will give the proof in Section~\ref{proof of main theorem1}.
However, one corollary of Theorem~\ref{thm:main1} with Proposition~\ref{prop:direct_limit_finite_type} is that $\aff{\B}^J$ can be described as the direct limit of $T_{s\clfw_r} \otimes B^{r,s}$ as $s \to \infty$, which was the interpretation from~\cite{Kwon18, JK19}.

\begin{rem}
\label{rem:various}
\mbox{}
\begin{enumerate}[(1)]
	\item The crystal operators $\ft_i$, for $i \in I_0$, on $\B_{\ii_0}$ can be realized using different reduced expressions $\ii_0$~\cite{BZ01} (see also~\cite{Kamnitzer10}).
	More explicit crystal structures, such as given by a bracketing rule, using constructions have been described in~\cite{Re97,SST18}.
	There are also more type-specific descriptions for type $A$~\cite{Kwon18}, for type $D$~\cite{JK19}, and for type $E_{6,7}$~\cite{Jang22}.
	\smallskip

	\item Note that $B^{r,s}$ is a perfect crystal (a technical condition that is equivalent to $B^{r,s} \otimes B(\lambda) \iso B(\mu)$ for a given $\mu \in \aff{P}^+$ of level $s$ and $r$, where $\lambda$ depends on $\mu$ and $r$) in our cases by \cite{FOS10} for nonexceptional types and \cite{Hiroshima21} for type $E_{6,7}^{(1)}$.
In fact, the author in \cite{Hiroshima21} uses the result \cite{Jang22} (equivalently, Theorem \ref{thm:main1} for type $E_{6,7}^{(1)}$) to deduce that the KR crystals $B^{r,s}$ for type $E_{6,7}^{(1)}$ with special node $r$ are perfect.
	\smallskip
	
	\item In~\cite{HJ12} (see also \cite{Wang23}), Hernandez and Jimbo gave a representation-theoretic interpretation of the limit of the normalized $q$-characters of the KR modules in terms of an integrable $U_q(\mathfrak{b})$-module, which is called the \defn{prefundamental representation}. On the other hand, the crystal $\B^J$ can be viewed as the direct limit of $T_{s\clfw_r} \otimes B^{r,s}$ as $s \to \infty$ \cite{Kwon13,Kwon18}, and as the crystal of the unipotent quantum coordinate ring $U_q(w^J)$ associated with $w^J$. Motivated from these facts, the family of prefundamental representations attached to special nodes is uniformly constructed on $U_q(w^J)$ with an explicit $U_q(\mathfrak{b})$-actions \cite{JKP21, JKP25, Jan25}. Thus the crystal $\aff{\B}^J$ can be regarded as a crystal of the cominuscule prefundamental representation (cf.~Remark \ref{rem:parabolic Verma}).
\end{enumerate}
\end{rem}

\begin{ex}
Consider $\aff{\g}$ of type $D_3^{(2)}$ with $r = 2$ (so $J = \{1\}$).
Then the crystal $\aff{\B}^J$ up to depth $5$ is given by Figure~\ref{fig:affine_KR_limit_ex}.
Hence, we compute $\aff{\B}^{J,1}$:
\[
\begin{tikzpicture}[>=latex,xscale=3]
\node (hw) at (0,0) {$1$};
\node (2) at (1,0) {$F_{\alpha_2}^{(1)}$};
\node (12) at (2,0) {$F_{\alpha_1+\alpha_2}^{(1)}$};
\node (212) at (3,0) {$F_{\alpha_2}^{(1)} F_{\alpha_1+\alpha_2}^{(1)}$};
\draw[->,red] (hw) -- node[midway,anchor=south] {\tiny $2$} (2);
\draw[->,blue] (2) -- node[midway,anchor=south] {\tiny $1$} (12);
\draw[->,red] (12) -- node[midway,anchor=south] {\tiny $2$} (212);
\draw[->,black] (212) .. controls (2.5,-1) and (1.5,-1) .. node[midway,anchor=north] {\tiny $0$} (2);
\draw[->,black] (12) .. controls (1.5,1) and (.5,1) .. node[midway,anchor=south] {\tiny $0$} (hw);
\end{tikzpicture}
\]
and $\aff{\B}^{J,2}$ (which is isomorphic to $B^{2,2}$ given in Example~\ref{ex:virtual_crystal}):
\[
\begin{tikzpicture}[>=latex, xscale=2.2, yscale=1, every node/.style={scale=0.75}]
\node (1) at (1.2, 0) {$1$};
\node (2) at (2, 0) {$F_{\alpha_2}^{(1)}$};
\node (3) at (3, 1) {$F_{\alpha_2}^{(2)}$};
\node (4) at (3, -1) {$F_{\alpha_1+\alpha_2}^{(1)}$};
\node (5) at (4, 1) {$F_{\alpha_1+2\alpha_2}^{(1)}$};
\node (6) at (4, -1) {$F_{\alpha_2}^{(1)} F_{\alpha_1+\alpha_2}^{(1)}$};
\node (7) at (5, 1) {$F_{\alpha_1+\alpha_2}^{(2)}$};
\node (8) at (5, -1) {$F_{\alpha_2}^{(2)} F_{\alpha_1+\alpha_2}^{(1)}$};
\node (9) at (6, 0) {$F_{\alpha_2}^{(1)} F_{\alpha_1+\alpha_2}^{(2)}$};
\node (10) at (7, 0) {$F_{\alpha_2}^{(2)} F_{\alpha_1+\alpha_2}^{(2)}$};
\draw[->,red] (1) -- node[midway,anchor=south] {\tiny $2$} (2);
\draw[->,red] (2) -- node[midway,anchor=south east] {\tiny $2$} (3);
\draw[->,blue] (2) -- node[midway,anchor=north east] {\tiny $1$} (4);
\draw[->,red] (4) -- node[midway,anchor=north] {\tiny $2$} (6);
\draw[->,red] (6) -- node[midway,anchor=north] {\tiny $2$} (8);
\draw[->,blue] (8) -- node[midway,anchor=north west] {\tiny $1$} (9);
\draw[->,blue] (3) -- node[midway,anchor=south] {\tiny $1$} (5);
\draw[->,blue] (5) -- node[midway,anchor=south] {\tiny $1$} (7);
\draw[->,red] (7) -- node[midway,anchor=south west] {\tiny $2$} (9);
\draw[->,red] (9) -- node[midway,anchor=south] {\tiny $2$} (10);
\draw[->,black] (10) .. controls (6.5,-1) and (6,-1) .. node[midway,anchor=north] {\tiny $0$} (8);
\draw[->,black] (9) .. controls (5, 0) .. node[midway,anchor=south east] {\tiny $0$} (6);
\draw[->,black] (7) -- node[pos=.2,anchor=north west] {\tiny $0$} (4);
\draw[->,black] (8) -- node[pos=.4,anchor=north east] {\tiny $0$} (3);
\draw[->,black] (6) .. controls (3, 0) .. node[midway,anchor=south west] {\tiny $0$} (2);
\draw[->,black] (4) .. controls (2,-1) and (1.5,-1) .. node[midway,anchor=north] {\tiny $0$} (1);
\end{tikzpicture}
\]
\end{ex}

\begin{figure}
\[
\begin{tikzpicture}[>=latex,xscale=1.5,yscale=1.5,baseline=0]
\node (hw) at (0,0) {$1$};
\node (2) at (0,-1) {$F_{\alpha_2}^{(1)}$};
\node (12) at (-1,-2) {$F_{\alpha_1+\alpha_2}^{(1)}$};
\node (22) at (1,-2) {$F_{\alpha_2}^{(2)}$};
\node (212) at (-2,-3) {$F_{\alpha_2}^{(1)} F_{\alpha_1+\alpha_2}^{(1)}$};
\node (122) at (0,-3) {$F_{\alpha_1+2\alpha_2}^{(1)}$};
\node (222) at (2,-3) {$F_{\alpha_2}^{(3)}$};
\node (2212) at (-3,-4) {$F_{\alpha_2}^{(2)} F_{\alpha_1+\alpha_2}^{(1)}$};
\node (1122) at (-1,-4) {$F_{\alpha_1+\alpha_2}^{(2)}$};
\node (1222) at (1,-4) {$F_{\alpha_2}^{(1)} F_{\alpha_1+2\alpha_2}^{(1)}$};
\node (2222) at (3,-4) {$F_{\alpha_2}^{(4)}$};
\node (22212) at (-4,-5) {$F_{\alpha_2}^{(3)} F_{\alpha_1+\alpha_2}^{(1)}$};
\node (21122) at (-2,-5) {$F_{\alpha_2}^{(1)} F_{\alpha_1+\alpha_2}^{(2)}$};
\node (11222) at (0,-5) {$F_{\alpha_1+2\alpha_2}^{(1)} F_{\alpha_1+\alpha_2}^{(1)}$};
\node (12222) at (2,-5) {$F_{\alpha_2}^{(2)} F_{\alpha_1+2\alpha_2}^{(1)}$};
\node (22222) at (4,-5) {$F_{\alpha_2}^{(5)}$};
\draw[->,red] (hw) -- node[midway,anchor=west] {\tiny $2$} (2);
\draw[->,blue] (2) -- node[midway,anchor=south east] {\tiny $1$} (12);
\draw[->,red] (2) -- node[midway,anchor=south west] {\tiny $2$} (22);
\draw[->,red] (12) -- node[midway,anchor=south east] {\tiny $2$} (212);
\draw[->,blue] (22) -- node[midway,anchor=south east] {\tiny $1$} (122);
\draw[->,red] (22) -- node[midway,anchor=south west] {\tiny $2$} (222);
\draw[->,red] (212) -- node[midway,anchor=south east] {\tiny $2$} (2212);
\draw[->,blue] (122) -- node[midway,anchor=south east] {\tiny $1$} (1122);
\draw[->,red] (122) -- node[midway,anchor=south west] {\tiny $2$} (1222);
\draw[->,blue] (222) -- node[midway,anchor=south east] {\tiny $1$} (1222);
\draw[->,red] (222) -- node[midway,anchor=south west] {\tiny $2$} (2222);
\draw[->,red] (2212) -- node[midway,anchor=south east] {\tiny $2$} (22212);
\draw[->,blue] (2212) -- node[midway,anchor=south west] {\tiny $1$} (21122);
\draw[->,red] (1122) -- node[midway,anchor=south east] {\tiny $2$} (21122);
\draw[->,blue] (1222) -- node[midway,anchor=south east] {\tiny $1$} (11222);
\draw[->,red] (1222) -- node[midway,anchor=south west] {\tiny $2$} (12222);
\draw[->,blue] (2222) -- node[midway,anchor=south east] {\tiny $1$} (12222);
\draw[->,red] (2222) -- node[midway,anchor=south west] {\tiny $2$} (22222);
\draw[->,black] (12) .. controls (-1, -1) .. node[midway,anchor=east] {\tiny $0$} (hw);
\draw[->,black] (212) .. controls (-2, -2) and (-1,-1) .. node[midway,anchor=east] {\tiny $0$} (2);
\draw[->,black] (2212) .. controls (-1, -3.5) and (0,-2) .. node[midway,anchor=south east] {\tiny $0$} (22);
\draw[->,black] (22212) .. controls (1, -4.2) and (.7, -3) .. node[midway,anchor=south east] {\tiny $0$} (222);
\draw[->,black] (21122) -- node[midway,anchor=east] {\tiny $0$} (212);
\draw[->,black] (1122) -- node[midway,anchor=east] {\tiny $0$} (12);
\draw[->,black] (11222) -- node[pos=.3,anchor=east] {\tiny $0$} (122);
\end{tikzpicture}
\]
\caption{The crystal $\aff{\B}^J$ in type $D_3^{(2)}$ with $J = \{1\}$ up to depth $5$. The $0$-arrows originating from elements of (classical) depth greater than $5$ are not drawn.}
\label{fig:affine_KR_limit_ex}
\end{figure}

In the viewpoint of Section \ref{sec:minuscule crystals}, one may think of two problems as follows.

\begin{prob}
\label{prob:KR_RPP}
Directly describe the affine crystal operators in terms of the RPPs.
\end{prob}

We expect the solution to Problem~\ref{prob:KR_RPP} to apply $f_0$ on the corresponding tensor product from Proposition~\ref{prop:only_comparable} and then sorting so that the result is in order.
Note that this is not simply adding/removing $1$ from every entry in the RPP.

\begin{prob}
\label{prob:KR_quiver}
RPPs on minuscule posets were connected with quiver representations in~\cite{GPT18}, which we can then use to give another model for $B^{r,s}$.
Directly describe the affine crystal operators in terms of the corresponding quiver representations.
\end{prob}

%===================
\subsection{Proof of Theorem~\ref{thm:main1}} \label{proof of main theorem1}

Let $\virtual{\g}$ be a Lie algebra with index set $\virtual{I}$, root lattice $\virtual{Q}$ with simple roots $\{\virtual{\alpha}_j\}_{j \in \virtual{i}}$, and weight lattice $\virtual{P}$ with fundamental weights $\{\virtual{\Lambda}_j\}_{j \in \virtual{i}}$.
Suppose there exists a map $\phi \colon \virtual{I} \to I$ with \defn{scaling factors} $\{c_i \in \ZZ_{>0}\}_{i \in I}$ such that the linear function $\Phi \colon \aff{P} \to \virtual{P}$ satisfies
\begin{equation}
\label{eq:virtual_compatibility}
\Phi(\Lambda_i) = c_i \sum_{j \in \phi^{-1}(i)} \Lambda_j,
\qquad\qquad
\Phi(\alpha_i) = c_i \sum_{j \in \phi^{-1}(i)} \alpha_j.
\end{equation}
Note that the first condition uniquely defines the morphism and only the second needs to be checked.
For this paper, we will primarily consider $\virtual{\g}$ of type $A_{2n-1}^{(2)}$ (resp.\ $D_{n+1}^{(2)}$) for $\aff{\g}$ of type $B_n^{(1)}$ (resp.\ $C_n^{(1)}$) and take $\virtual{I} = I$ with $\phi(i) = i$ with $c_i = 1$ except for $c_n = 2$ (resp. $c_0 = c_n = 2$).

Using this data, there exists a way to construct the $U_q(\aff{\g})$-crystal $B(\lambda)$ from the $U_q(\virtual{\g})$-crystal $B(\Phi(\lambda))$ known as a \defn{virtual crystal} (see, \textit{e.g},~\cite{K96,SalisburyS18}).
We define the $U_q(\aff{\g})$-crystal by the map $u_{\lambda} \mapsto u_{\Phi(\lambda)}$ with the remaining crystal structure determined by
\begin{gather*}
e_i \mapsto \prod_{j \in \phi^{-1}(i)} \virtual{e}_j^{c_i},
\qquad
f_i \mapsto \prod_{j \in \phi^{-1}(i)} \virtual{f}_j^{c_i},
\\
\varepsilon_i(b) := \virtual{\varepsilon}_k(b),
\qquad
\varphi_i(b) := \virtual{\varphi}_k(b),
\qquad
\wt := \Phi^{-1} \circ \virtual{\wt},
\end{gather*}
where $\virtual{x}$ denotes the corresponding $U_q(\virtual{\g})$-crystal object and $k$ is any element in $\phi^{-1}(i)$.
Equation~\eqref{eq:virtual_compatibility} ensures all of these are well-defined~\cite{K96,OSS03III,OSS03II,SalisburyS18,SS2006}.
This is also known to extend to KR crystals in many cases~\cite{OSS03III,OSS03II,PS15,SS2006}.

\begin{ex}
\label{ex:virtual_crystal}
Consider type $C_2^{(1)}$, which we realize inside of $D_3^{(2)}$ by the scaling factors $(2, 1, 2)$.
Then the KR crystal $B^{2,1}$ in type $C_2^{(1)}$ is given by
\[
\begin{tikzpicture}[>=latex, xscale=2, yscale=.7]
\foreach \i in {1,2,3,4,5} {
  \node (\i) at (\i, 0) {$\xi_{\i}$};
}
\draw[->,red] (1) -- node[midway,anchor=south] {\tiny $2$} (2);
\draw[->,blue] (2) -- node[midway,anchor=south] {\tiny $1$} (3);
\draw[->,blue] (3) -- node[midway,anchor=south] {\tiny $1$} (4);
\draw[->,red] (4) -- node[midway,anchor=south] {\tiny $2$} (5);
\draw[->,black] (5) .. controls (4.5,1) and (2.5,1) .. node[midway,anchor=south] {\tiny $0$} (2);
\draw[->,black] (4) .. controls (3.5,-1) and (1.5,-1) .. node[midway,anchor=north] {\tiny $0$} (1);
\end{tikzpicture}
\]
This is realized as the virtual crystal inside of $\widetilde{B}^{1,2} = B^{2,2}$ in type $D_3^{(2)}$:
\[
\begin{tikzpicture}[>=latex, xscale=2, yscale=.7]
\node (1) at (1, 0) {$\boxed{\delta_1}$};
\node (2) at (2, 0) {$\delta_2$};
\node (3) at (3, 1) {$\boxed{\delta_3}$};
\node (4) at (3, -1) {$\delta_4$};
\node (5) at (4, 1) {$\boxed{\delta_5}$};
\node (6) at (4, -1) {$\delta_6$};
\node (7) at (5, 1) {$\boxed{\delta_7}$};
\node (8) at (5, -1) {$\delta_8$};
\node (9) at (6, 0) {$\delta_9$};
\node (10) at (7, 0) {$\boxed{\delta_{10}}$};
\draw[->,red] (1) -- node[midway,anchor=south] {\tiny $2$} (2);
\draw[->,red] (2) -- node[midway,anchor=south east] {\tiny $2$} (3);
\draw[->,blue] (2) -- node[midway,anchor=north east] {\tiny $1$} (4);
\draw[->,red] (4) -- node[midway,anchor=north] {\tiny $2$} (6);
\draw[->,red] (6) -- node[midway,anchor=north] {\tiny $2$} (8);
\draw[->,blue] (8) -- node[midway,anchor=north west] {\tiny $1$} (9);
\draw[->,blue] (3) -- node[midway,anchor=south] {\tiny $1$} (5);
\draw[->,blue] (5) -- node[midway,anchor=south] {\tiny $1$} (7);
\draw[->,red] (7) -- node[midway,anchor=south west] {\tiny $2$} (9);
\draw[->,red] (9) -- node[midway,anchor=south] {\tiny $2$} (10);
\draw[->,black] (10) .. controls (6.5,-1) and (6,-2) .. node[midway,anchor=north] {\tiny $0$} (8);
\draw[->,black] (9) .. controls (5, 0) .. node[midway,anchor=south east] {\tiny $0$} (6);
\draw[->,black] (7) -- node[pos=.2,anchor=north west] {\tiny $0$} (4);
\draw[->,black] (8) -- node[pos=.4,anchor=north east] {\tiny $0$} (3);
\draw[->,black] (6) .. controls (3, 0) .. node[midway,anchor=south west] {\tiny $0$} (2);
\draw[->,black] (4) .. controls (2,-2) and (1.5,-1) .. node[midway,anchor=north] {\tiny $0$} (1);
\end{tikzpicture}
\]
where we have boxed the entries corresponding to the type $C_1^{(1)}$ crystal.
\end{ex}
\medskip

We prove our main theorem by using the approach in~\cite{BS21,FOS09,Jang22,JS10}, where we branch $B^{r,s}$ into Levi subalgebras and show that there is a unique way to recombine these.
In particular, we will use the analog of~\cite[Thm.~3.15]{JS10}.
However, we require two technical results, where we provide uniform proofs of something previously shown using type specific information.
The first is a uniform proof that the decomposition of $B(s\clfw_r)$ into $U_q(\aff{\g}_J)$-crystals, where $J = I_0 \setminus \{r\}$, is multiplicity free (see, \textit{e.g.},~\cite[Rem.~5.2]{Jang22}).
Let $\{ \check{\Lambda}_j \mid j \in I \setminus \{r\}\}$ denote the fundamental weights of $\aff{\g}_r$.
The second is that the crystal $T_{s\clfw_r} \otimes \aff{\B}^{J,s} \iso B(s\check{\Lambda}_0)$ as $U_q(\aff{\g}_r)$-crystals under the $\tau_r$-twisted isomorphism (see, \textit{e.g.}, the proof of~\cite[Thm.~5.1]{Jang22}).

Let us begin by stating our method of proof, which is~\cite[Thm.~3.15]{JS10} but combined with~\cite[Lemma~3.12]{JS10} to be an assumption.
In particular, if we show these assumptions, then we will obtain Theorem~\ref{thm:main1} as a result.

\begin{thm}[{\cite[Lemma~3.12, Thm.~3.15]{JS10}}]
\label{thm:reconstruction}
Let $a, b \in I$ and $J = I \setminus \{a, b\}$.
Let $B$ and $B'$ be $U_q(\aff{\g})$-crystals such that
\begin{align*}
B & \iso B' & & \text{as $U_q(\aff{\g}_a)$-crystals,} \\
B & \iso B' & & \text{as $U_q(\aff{\g}_b)$-crystals,} \\
B & \iso B' \iso \bigoplus_{\mu \in M} B(\mu_{\downarrow}) & & \text{as $U_q(\aff{\g}_J)$-crystals,}
\end{align*}
where $M \subseteq \aff{P}^+$ and $\mu_{\downarrow}$ denotes the restriction to $\aff{P}_J$.
Then there exists a unique $U_q(\aff{\g})$-crystal isomorphism $\psi \colon B \to B'$.
\end{thm}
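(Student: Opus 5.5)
The plan is to build the isomorphism $\psi$ in three stages: first on highest weight elements of the common $U_q(\aff{\g}_J)$-decomposition, then extended to $U_q(\aff{\g}_a)$- and $U_q(\aff{\g}_b)$-isomorphisms, and finally shown to intertwine all crystal operators. Write $\psi_a \colon B \to B'$ and $\psi_b \colon B \to B'$ for the given $U_q(\aff{\g}_a)$- and $U_q(\aff{\g}_b)$-crystal isomorphisms. Note that $\aff{\g}_a$ has index set $I \setminus \{a\} \supseteq J$ and similarly for $b$.

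\emph{Stage one.} Restricting $\psi_a$ and $\psi_b$ to $U_q(\aff{\g}_J)$-crystals gives two $U_q(\aff{\g}_J)$-isomorphisms $B \to B'$. Each component $B(\mu_\downarrow)$ is connected with a unique highest weight element and has trivial automorphism group, so each such isomorphism is determined by where it sends the $J$-highest weight elements. This step reduces to checking that $\psi_a$ and $\psi_b$ agree on the set of $J$-highest weight elements of $B$.

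\emph{Stage two.} Show that $\psi_a$ and $\psi_b$ can be adjusted to agree on $J$-highest weight elements. If $M$ were multiplicity free, agreement would be immediate, since the $J$-highest weight element of a given $J$-weight would be unique. However, $M$ is only assumed to be a subset of $\aff{P}^+$, and distinct $\mu \in M$ can restrict to the same $\mu_\downarrow$ because they differ in the $\Lambda_a, \Lambda_b$ coefficients. This is resolved by using the full weight: $\psi_a$ preserves $\wt$ (and $\varepsilon_a, \varphi_a$ through the crystal structure), and so does $\psi_b$. A $J$-highest weight element $u_\mu$ is therefore pinned down by its full weight $\mu$, which distinguishes the summands indexed by $M$. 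Hence both $\psi_a$ and $\psi_b$ send $u_\mu \mapsto u'_\mu$, and they agree as $U_q(\aff{\g}_J)$-maps.

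\emph{Stage three.} Define $\psi := \psi_a = \psi_b$, which is well defined by stage two. It commutes with $e_i, f_i$ for $i \neq a$ (from $\psi_a$) and for $i \neq b$ (from $\psi_b$), hence for all $i \in I$ because $a \neq b$. Uniqueness follows because any $U_q(\aff{\g})$-isomorphism restricts to a $U_q(\aff{\g}_J)$-isomorphism, which is forced on each $J$-highest weight element by the weight argument above and then on each component by connectedness.

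\textbf{Main obstacle.} The key subtlety is stage two, that is, whether $\mu \mapsto \mu_\downarrow$ being non-injective on $M$ can produce ambiguity. It is exactly this point that requires $M \subseteq \aff{P}^+$ to index components by their full affine weight. In the application, the paper will verify the multiplicity-freeness lemma (decomposition of $B(s\clfw_r)$ over $\aff{\g}_J$ with $J = I_0 \setminus \{r\}$, together with the node $0$) to guarantee that this hypothesis holds.
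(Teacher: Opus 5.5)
Your proof is correct. It is essentially the gluing argument behind \cite[Lemma~3.12, Thm.~3.15]{JS10}, which the paper cites rather than reproves. The restrictions of $\psi_a$ and $\psi_b$ to $U_q(\aff{\g}_J)$ must both send the $J$-highest weight element of full weight $\mu$ to the unique such element of $B'$, so they coincide on each connected $J$-component. The common map then intertwines $e_i, f_i, \varepsilon_i, \varphi_i$ for all $i \in (I \setminus \{a\}) \cup (I \setminus \{b\}) = I$.

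One small correction: $\psi_a$ preserves $\wt$, since restricting to $\aff{\g}_a$ only shrinks the index set (and in the paper's application the level-zero condition forces this anyway). It does not preserve $\varepsilon_a$ and $\varphi_a$ individually. You do not need it to, because those statistics are preserved by $\psi_b$ in your stage three.
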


\begin{rem}
The third condition does \emph{not} imply that the decomposition into $U_q(\aff{\g}_J)$-crystals is multiplicity free, as was noted in~\cite[Sec.~3.4]{BS21}.
Multiplicity arises when the lowest weight element is also a $J$-highest weight element, as for the case of $B(\clfw_7)$ in type $E_7$.
\end{rem}

The remainder of this section is showing these conditions are satisfied for $B = T_{s\clfw_r} \otimes \aff{\B}^{J,s}$ and $B' = B^{r,s}$.

\begin{lem}
\label{lemma:total_order}
Let $r$ be a special node and $J = I_0 \setminus \{r\}$. Let $b, b' \in B(\clfw_r)$ be $J$-highest weight elements. Then $b$ and $b'$ are comparable.
\end{lem}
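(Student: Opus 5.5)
We view $B(\clfw_r)$ as a poset via Remark~\ref{rem:crystal_poset}. The plan is to show that the $J$-highest weight elements form a chain, by exhibiting them explicitly as a chain.

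First we treat the case where $r$ is minuscule. By Proposition~\ref{prop:minuscule_Decomposition}, the restriction of $B(\clfw_r)$ to $U_q(\g_J)$ is a disjoint union of minuscule $U_q(\g_J)$-crystals, so the $J$-highest weight elements are exactly the highest weight elements of these components. In the order-ideal picture $B(\clfw_r) \iso \mcJ(\mcP_r)$, an ideal $\mcI$ is $J$-highest weight exactly when it has no removable element $p$ with $\pi(p) \neq r$. In other words, every maximal element of $\mcI$ lies in the fiber $\pi^{-1}(r)$. By (H1), $\pi^{-1}(r) = \{p_1 < p_2 < \cdots < p_m\}$ is totally ordered.

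The key step is to show that such an $\mcI$ is determined by the largest $p_k$ it contains, and that $\mcI = \{ x \in \mcP_r \mid x \leq p_k \}$. Indeed, if all maximal elements of $\mcI$ lie in $\pi^{-1}(r)$, then the only possible maximal element is the largest element of $\mcI \cap \pi^{-1}(r)$, because the fiber is a chain. Hence $\mcI$ is a principal ideal generated by that $p_k$, or $\mcI = \emptyset$. Principal ideals generated by elements of a chain are nested, so the $J$-highest weight elements correspond to $\emptyset \subseteq {\downarrow} p_1 \subseteq \cdots \subseteq {\downarrow} p_m$. Containment of order ideals is the crystal poset order (the crystal order is $\mcJ(\mcP_r)$), so any two such elements are comparable.

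For the cominuscule but non-minuscule cases ($r=1$ in $B_n$ and $r=n$ in $C_n$), the case analysis is small, and we will argue directly. For $B_n$ with $r=1$, $B(\clfw_1)$ is the vector-representation crystal $1 \to 2 \to \cdots \to n \to 0 \to \bar n \to \cdots \to \bar 1$, which is itself a chain, so there is nothing to prove. For $C_n$ with $r=n$, the paper's approach suggests using the virtual crystal embedding into $D_{n+1}^{(2)}$-type data, where $n$ is minuscule for the corresponding $B_n$. We will instead compute directly. The $J$-highest weight elements ($J$ of type $A_{n-1}$) of the spin-like crystal $B(\clfw_n)$ of type $C_n$ are the KN columns of height $n$ of the form $1,2,\dots,k,\overline{n},\dots,\overline{k+1}$, for $k = n, n-1, \dots, 0$. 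Each is obtained from the previous one by applying $f_n f_{n-1}\cdots f_{k}$-type lowering operators, so they form a chain in the crystal order.

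The main obstacle is making the cominuscule $C_n$ case rigorous uniformly. We would first try to transfer the minuscule argument via the virtual crystal. Under the scaling $f_n \mapsto \virtual f_n^2$, $J$-highest weight elements and the crystal order pull back from the virtual ambient crystal, and $B(\clfw_n)$ of type $C_n$ embeds as the $\virtual f_n^{2}$-closed part of $B(2\clfw_n)$ of type $B_n$. Comparability there should follow from Proposition~\ref{prop:only_comparable} together with the chain result above applied to the tensor factors. If this bookkeeping becomes messy, we fall back on the explicit KN-tableau description.
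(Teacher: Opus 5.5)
Your proof is correct, and it takes a more explicit route than the paper's. The paper argues in two lines: two incomparable $J$-highest weight elements would require a crystal operator that fails to commute with $f_r$, and this is excluded because every reduced word of an element of $W^J$ is fully commutative.

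You instead work in the heap. A $J$-highest weight order ideal has all its maximal elements in the chain $\pi^{-1}(r)$, so it has at most one maximal element. Hence the $J$-highest weight elements are exactly $\emptyset \subseteq {\downarrow}p_1 \subseteq \cdots \subseteq {\downarrow}p_m$. This is the rigorous content behind the paper's full-commutativity remark, and it gives more than comparability: an explicit list of $\abs{\pi^{-1}(r)}+1$ such elements, distinguished by depth. That also makes the $s=1$ case of Lemma~\ref{lemma:decomp_by_weight} immediate.

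The paper's argument buys brevity and a uniform statement. However, the identification of $B(\clfw_r)$ with $W^J$ that it relies on holds only for minuscule $r$. Your separate treatment of $r=1$ in $B_n$ (where $B(\clfw_1)$ is a single chain) and $r=n$ in $C_n$ covers the cases the one-liner passes over.

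In the $C_n$ case, two points should be tightened.

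\begin{enumerate}
\item The list of $J$-highest weight elements is only asserted. It can be justified as follows. Each $\lambda_k = \epsilon_1 + \cdots + \epsilon_k - \epsilon_{k+1} - \cdots - \epsilon_n$ is $W$-conjugate to $\clfw_n$, so it is the weight of a unique extremal element $u_{\lambda_k}$. That element is $J$-highest weight because $\lambda_k + \alpha_j$ has an entry $\pm 2$ for $j \in J$, so it is not a weight. Moreover, $\sum_{k=0}^{n} \dim V_{\mathfrak{gl}_n}(\lambda_k) = \dim V(\clfw_n)$, since both equal the Catalan number $C_{n+1}$. So the components generated by the $u_{\lambda_k}$ exhaust the crystal, and there are no other $J$-highest weight elements.
\item The lowering step should read $u_{\lambda_{k-1}} = \ft_n \ft_{n-1}^2 \cdots \ft_k^2\, u_{\lambda_k}$, because the successive pairings along $s_k, \ldots, s_{n-1}, s_n$ are $2, \ldots, 2, 1$. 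It is not a product with all exponents equal to one.
\end{enumerate}

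Finally, drop the virtual-crystal alternative as sketched. A path of ambient operators between two virtual elements of $B(2\clfw_n)$ need not factor through $\virtual{\ft}_i$ ($i<n$) and $\virtual{\ft}_n^2$. So comparability in the ambient crystal does not formally pull back to $B(\clfw_n)$; rely on the explicit computation.
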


\begin{proof}
In order for two $J$-highest weight elements to be incomparable, there needs to exist a crystal operator that does not commute with $f_r$. However, since every reduced word of $W^J$ is fully-commutative, all crystal operators must commute.
\end{proof}

We would want to extend Proposition~\ref{prop:only_comparable} to include all special nodes, but it does not hold for the cominuscule nodes (that are not minuscule) as the following example shows.
Indeed, for type $B_n$, we only have that the connected component generated by $u_{\clfw_r}^{\otimes s}$ is contained within the subset, and for type $C_n$, we do not even have containment for $n \geq 4$.
Thus, we will need a separate proof for these two cases.

\begin{ex}
\label{ex:ordering_failure}
We will take $s = 2$ throughout this example.
In type $B_2$, the cominuscule node is $r = 1$, and we have $B(\clfw_1)$ given by the crystal graph
\[
1 \; {\color{blue} \xrightarrow[\hspace{20pt}]{1}} \; 2 \; {\color{red} \xrightarrow[\hspace{20pt}]{2}} \; 0 \; {\color{red} \xrightarrow[\hspace{20pt}]{2}} \; \btw \; {\color{blue} \xrightarrow[\hspace{20pt}]{1}} \; \bon.
\]
The element $0 \otimes 0$ is not in the connected component of $1 \otimes 1 \in B(\clfw_1)^{\otimes 2}$.
(Instead it is in the component corresponding to $B(\clfw_2)$.)

For type $C_4$, the cominuscule node is $r = 4$.
We have the element
\[
\ft_2^4 \ft_1 \ft_3^3 \ft_2 \ft_4^2 \ft_3^2 \ft_4 (u_{\clfw_4} \otimes u_{\clfw_4})
=
(\ft_2^2 \ft_3^2 \ft_4 u_{\clfw_4}) \otimes (\ft_2^2 \ft_1 \ft_3 \ft_2 \ft_4 \ft_3^2 \ft_4 u_{\clfw_4}),
\]
and a direct examination of the crystal graph for $B(\clfw_4)$ shows these two tensor factors are incomparable.
\end{ex}

\begin{lem}
\label{lemma:hw_decomp_elements}
Let $r$ be a special node and $J = I_0 \setminus \{r\}$.
Let
\[
b = b_1 \otimes \cdots \otimes b_{s-1} \otimes b_s \in B(s \clfw_r) \subseteq B(\clfw_r)^{\otimes s}
\]
be a $J$-highest weight element.
Then $b_k$ is $J$-highest weight for all $1 \leq k \leq s$.
\end{lem}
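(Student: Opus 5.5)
The plan is to split into the minuscule and cominuscule cases, as announced in the introduction. In the minuscule case we use Proposition~\ref{prop:only_comparable}. It identifies $B(s\clfw_r)$ with the set of weakly increasing tensors $b_1 \le \cdots \le b_s$ in the poset $\mcJ(\mcP_r)$ of Remark~\ref{rem:crystal_poset}, and this set is closed under the crystal operators.

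First recall the signature rule for $\et_j$ on a tensor product (with the convention of~\cite{BS17}). The element $b$ is $J$-highest weight if and only if, for each $j \in J$, every $\et_j$-removable position in the signature is cancelled. For a minuscule crystal we have $\varepsilon_j(b_k), \varphi_j(b_k) \in \{0,1\}$, so each factor contributes at most one bracket. The claim therefore reduces to the following: if some $b_k$ has $\varepsilon_j(b_k) = 1$, then there is an uncancelled bracket in the word for $b$.

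Suppose $k$ is extremal, so that the corresponding bracket is cancelled only by a factor $b_m$ with $\varphi_j(b_m) = 1$ on the appropriate side. Use the heap description, in which $\et_j$ removes the top box of $\pi^{-1}(j)$ from the order ideal and $\ft_j$ adds a box along $\pi^{-1}(j)$. The chain condition $b_k \subseteq b_m$ (or $b_m \subseteq b_k$, depending on the side) then forces the following. The removable $j$-box of $b_k$ lies below the addable $j$-box of $b_m$ along the totally ordered fiber $\pi^{-1}(j)$, and every intermediate factor either also has a removable $j$-box or has neither. Counting brackets between $k$ and $m$ therefore shows that the removable bracket of the extremal such factor stays unmatched. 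This is the same mechanism that makes the RPP bracketing rule in Proposition~\ref{prop:direct_limit_finite_type} well behaved.

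Alternatively, and more cleanly, we can argue directly with the RPP: $b$ is $J$-highest weight if and only if no $\et_j$, $j\in J$, can remove an anti-box. One then shows that this forces each level order ideal $\mcI_t$ to be $J$-highest weight in $B(\clfw_r)$. Lemma~\ref{lemma:total_order} guarantees that such ideals are totally ordered, which is consistent with the chain.

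For the cominuscule but not minuscule nodes ($B_n$ with $r=1$, $C_n$ with $r=n$), we embed via the virtual crystal into the dual type $\virtual{\g}$ ($A_{2n-1}^{(2)}$, resp.\ $D_{n+1}^{(2)}$ classically $C_n$, resp.\ $B_n$), where the node $r$ is minuscule. The steps are as follows.
\begin{enumerate}
\item Show that $B(s\clfw_r)$ is realized virtually inside $B(s c_r \virtual{\clfw}_r)$, and that $B(\clfw_r)^{\otimes s}$ sits inside $B(c_r\virtual{\clfw}_r)^{\otimes s}$ compatibly with tensor products. For $c_r = 2$, use $B(2\virtual{\clfw}_r) \subseteq B(\virtual{\clfw}_r)^{\otimes 2}$.
\item Observe that $J$-highest weight in the virtual sense is equivalent to $\phi^{-1}(J)$-highest weight in $\virtual{B}$, since $\varepsilon_i = \virtual{\varepsilon}_k$.
\item Apply the minuscule case in $\virtual{\g}$ to the image, a tensor of $s c_r$ (or $s$) factors, and conclude that each virtual factor is highest weight.
\item Pull this back to the original factors $b_k$.
\end{enumerate}

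The main obstacle is step (1) together with the pullback in step (4). We must check that the virtualization map is compatible with the tensor factorization: the image of $b_k$ should be exactly the corresponding block of factors, which holds because virtual maps are tensor-compatible~\cite{OSS03II}. We must also check that "highest weight in each virtual factor" implies "$J$-highest weight for $b_k$". The latter follows from the definition $\varepsilon_i(b_k)=\virtual{\varepsilon}_k(\text{image})$ together with the tensor rule for $\virtual{\varepsilon}$ on the block: if every tensor factor has $\virtual{\varepsilon}=0$, then the block does too. Example~\ref{ex:ordering_failure} explains why this detour is necessary rather than applying Proposition~\ref{prop:only_comparable} directly.
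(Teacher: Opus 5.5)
Your cominuscule step is the same virtual-crystal reduction the paper uses, and it is fine. The gap is in the minuscule case.

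You reduce the lemma to a single-color claim: for a fixed $j\in J$, if some factor has $\varepsilon_j(b_k)=1$, then the $j$-signature of $b$ has an uncancelled bracket. You justify this by saying that a cancelling factor $b_m$ forces every intermediate factor to carry only removable $j$-boxes. That claim is false.

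Take type $A_5$ with $r=2$, and realize $B(\clfw_2)$ as the single columns $\{a<b\}\subseteq\{1,\dots,6\}$. Let $b_1=\{1,4\}$ and $b_2=\{3,5\}=f_4f_2f_1b_1$. Then $b_1\le b_2$, so $b_1\otimes b_2\in B(2\clfw_2)$ by Proposition~\ref{prop:only_comparable}. We have $\varepsilon_3(b_1)=1$, $\varepsilon_3(b_2)=0$ and $\varphi_3(b_2)=1$, so the tensor rule gives $\varepsilon_3(b_1\otimes b_2)=\max(0,0+1-1)=0$. The removable $3$-bracket of $b_1$ is cancelled by the adjacent factor, and there are no intermediate factors at all. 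This element fails to be $J$-highest weight only because of a \emph{different} color, namely $\varepsilon_4(b_2)=1$.

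So no argument that looks at one color $j$ in isolation can work. Your RPP alternative does not help either: the level ideals $\mcI_t$ are exactly the factors $b_k$, so showing that each $\mcI_t$ is $J$-highest weight is the lemma itself.

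What is missing is to use all colors of $J$ at once, through the last factor. In the paper's tensor convention $\varepsilon_j(b'\otimes b_s)\ge\varepsilon_j(b_s)$. Hence a $J$-highest weight $b=b'\otimes b_s$ forces $b_s$ to be $J$-highest weight, and also $\varepsilon_j(b')\le\varphi_j(b_s)$ for every $j\in J$.

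For the colors with $\varphi_j(b_s)=1$ you then need a structural fact about $J$-highest weight elements of the minuscule crystal. Namely, the order ideal of such a $b_s$ contains no $j$-box, so $\varepsilon_j(b_t)=0$ for every $b_t\le b_s$.
\begin{itemize}
\item For connected $J$ this follows because $\wt(b_s)=-\clfw_r+\clfw_j$ has the same length as $\clfw_r$. That forces the $\alpha_j$-coefficient of $2\clfw_r-\clfw_j$, which counts the $j$-boxes, to vanish.
\item In type $A$ one can check it directly on columns.
\end{itemize}

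With this fact, $b_1\otimes\cdots\otimes b_{s-1}$ is again $J$-highest weight. An induction on $s$ that peels off $b_s$, which is the structure of the paper's proof, then finishes the minuscule case.
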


\begin{proof}
When $r$ is a cominuscule node, the result will follow from the realization as a virtual crystal and the corresponding statement in the ambient crystal (as $r$ corresponds to a minuscule node).
Thus, we can assume that $r$ is a minuscule node.

Recall that Proposition~\ref{prop:only_comparable} states $b_1 \leq \cdots \leq b_s$.
We prove the claim by induction on $s$ with the base case of $s = 1$ being trivial.
Consider a $J$-highest weight element $b = b' \otimes b_s$, and by induction, $b'$ is also $J$-highest weight.
If $b_s = u_{\clfw_r}$, then the claim holds with $b_1 = \cdots = b_s$, and so we now assume $b_s \neq u_{\clfw_r}$.
Since $b_{s-1} \leq b_s$, there exists $i_1, \dotsc, i_k \in I_0$ such that $b_s = f_{i_k} \cdots f_{i_1} b_{s-1}$.
By the tensor product rule, if $i_k \in J$, then
\begin{align*}
e_{i_k} b = e_{i_k}(b' \otimes b_s) = b' \otimes (e_{i_k} f_{i_k} \cdots f_{i_1} b_{s-1})
& = b' \otimes (f_{i_{k-1}} \cdots f_{i_1} b_{s-1}) \neq \zero.
\end{align*}
We have $i_k = r$ if and only if $b_s$ is $J$-highest weight, and thus $b$ is $J$-highest weight if and only if $b_s$ is $J$-highest weight.
\end{proof}

Now we prove the main technical result to show the Theorem~\ref{thm:main1}.

\begin{lem}
\label{lemma:decomp_by_weight}
Let $r$ be a special node and $J = I_0 \setminus \{r\}$.
Then the $J$-highest weight elements of $B(s\clfw_r)$ are uniquely determined by their weight.
\end{lem}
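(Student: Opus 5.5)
Following the structure of the proof of Lemma~\ref{lemma:hw_decomp_elements}, I would first treat the case where $r$ is a minuscule node. The cominuscule (non-minuscule) nodes, namely $r = 1$ in type $B_n$ and $r = n$ in type $C_n$, I would then deduce by passing to the virtual crystal in the dual type, where $r$ is minuscule.

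For $r$ minuscule, view $B(s\clfw_r) \subseteq B(\clfw_r)^{\otimes s}$ via Proposition~\ref{prop:only_comparable}, so an element is $b_1 \otimes \cdots \otimes b_s$ with $b_1 \le \cdots \le b_s$. By Lemma~\ref{lemma:hw_decomp_elements}, a $J$-highest weight element has every $b_k$ $J$-highest weight. Conversely, I would check that any weakly increasing tuple of $J$-highest weight elements of $B(\clfw_r)$ is $J$-highest weight. This follows from the tensor product rule: for $i \in J$ every factor has $\varepsilon_i = 0$, so $\varepsilon_i$ of the tensor product is $0$.

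Let $H = \{h_0 < h_1 < \cdots < h_m\}$ be the $J$-highest weight elements of $B(\clfw_r)$. This set is totally ordered by Lemma~\ref{lemma:total_order}, and I would take $h_0 = u_{\clfw_r}$. The $J$-highest weight elements of $B(s\clfw_r)$ are then exactly the multisets of size $s$ from $H$, recorded by multiplicities $(n_0, \dotsc, n_m)$ with $\sum_j n_j = s$. Their weight is $\sum_j n_j \wt(h_j)$. Uniqueness by weight therefore reduces to showing that the vectors $\wt(h_1) - \wt(h_0), \dotsc, \wt(h_m) - \wt(h_0)$ are linearly independent.

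The key step is this linear independence. In the heap picture, $h_j$ corresponds to an order ideal $\mcI_j$ of $\mcP_r$. Since $\mcI_0 \subsetneq \mcI_1 \subsetneq \cdots \subsetneq \mcI_m$, the differences $\wt(h_j) - \wt(h_0) = -\sum_{p \in \mcI_j} \alpha_{\pi(p)}$ form a chain of strictly nested ideals. Nestedness alone does not give linear independence, because the root sums can coincide after projection. I would instead use the $J$-highest weight condition: each $\wt(h_j)$ restricted to $J$ is a distinct dominant $\g_J$-weight. I would try to show that the pairing $\inner{\wt(h_j)}{\coroot_r}$, together with the $J$-part, strictly changes in a triangular way. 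Concretely, one can choose for each $j \ge 1$ a coroot $\coroot_{i_j}$, with $i_j$ the label of a maximal element of $\mcI_j \setminus \mcI_{j-1}$, that pairs to zero with all earlier differences but not with the $j$-th. This is the step I expect to be the main obstacle.

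An alternative I would keep in reserve is to argue directly from the polyhedral or RPP model of Proposition~\ref{prop:heap_string}. There, a $J$-highest weight RPP is forced to be constant on certain layers, and the layer heights can be read off from the weight. This could bypass the linear-algebra step. If both approaches stall, I would fall back on a case check over the short list of minuscule cases: $A_n$ (rectangles), $B_n$ spin, $C_n$ with $r=1$, $D_n$, $E_6$, and $E_7$. In each of these $H$ is small, with $m \le 3$ or so.

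For the cominuscule case, the virtual embedding sends $B(s\clfw_r)$ injectively and weight-compatibly, via the injective map $\Phi$, into the ambient minuscule crystal. It also sends $J$-highest weight elements to $\phi^{-1}(J)$-highest weight elements. The ambient result then gives uniqueness.
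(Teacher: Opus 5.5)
Your reductions are sound. The virtual-crystal step for the cominuscule nodes is what the paper does. Combining Lemma~\ref{lemma:hw_decomp_elements}, Proposition~\ref{prop:only_comparable} and Lemma~\ref{lemma:total_order} to reduce to linear independence of $v_j := \wt(h_j) - \wt(h_0)$ is also valid, since it suffices given $\sum_j n_j = s$.

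There is a genuine gap, however. That linear independence is the entire content of the lemma, you leave it open, and the mechanism you propose cannot work. A $J$-highest weight element $b \neq u_{\clfw_r}$ has no removable box with label in $J$. Since $\pi^{-1}(r)$ is a chain, $\mcI_b$ then has a unique maximal box, so $\mcI_b = {\downarrow}p$ for some $p \in \pi^{-1}(r)$. Hence the maximal element of $\mcI_j \setminus \mcI_{j-1}$ is always labeled $r$, and your $i_j$ is always $r$. But $\inner{v_k}{\alpha_r^{\vee}} = \varphi_r(h_k) - \varepsilon_r(h_k) - 1 = -2$ for every $k \geq 1$, so this pairing never vanishes on earlier differences.

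Your auxiliary claim that the $J$-restrictions of the $\wt(h_j)$ are distinct is also false whenever the lowest weight element is $J$-highest weight. Examples are type $A_3$ with $r = 2$, and $E_7$ with $r = 7$ (the phenomenon in the remark after Theorem~\ref{thm:reconstruction}). There the lowest weight element restricts to $0$ on $J$, just like $u_{\clfw_r}$.

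Neither fallback closes the gap. The case check is not finite: $m = \abs{\pi^{-1}(r)}$ equals $\min(r, n+1-r)$ in type $A_n$ and $n$ for the spin node of $B_n$. The RPP alternative only restates the question, because the layer heights on ${\downarrow}p_j \setminus {\downarrow}p_{j-1}$ are the partial sums $n_j + \cdots + n_m$.

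The missing idea is to look at addable boxes rather than removable ones, i.e.\ at $\varphi$. For a $J$-highest weight $b \neq u_{\clfw_r}$, minuscule $r$-strings force $\varepsilon_r(b) = 1$ and $\varphi_r(b) = 0$. Hence $\wt(b) = -\clfw_r + \sum_{i \in X_b} \clfw_i$ with $X_b = \{ i \in J \mid \varphi_i(b) = 1 \}$.

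The paper's proof shows these sets $X_b$ are pairwise disjoint. By Proposition~\ref{prop:minuscule_Decomposition}, each $J$-component is a minuscule $\g_J$-crystal, so $X_b$ meets each connected component of $J$ at most once. Comparability plus a Weyl group argument then rules out two distinct such elements sharing the same $X_b$. Moreover, $X_b = \emptyset$ only for the lowest weight element.

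With this in hand, take $i_j \in X_{h_j}$, the label of an addable box of $\mcI_j$. Then $\inner{v_k}{\alpha_{i_j}^{\vee}} = \varphi_{i_j}(h_k) = \delta_{jk}$. So each $n_j$ with $X_{h_j} \neq \emptyset$ is the coefficient of $\clfw_{i_j}$ in the weight. The remaining multiplicity (that of the lowest weight element) comes from the $\clfw_r$-coefficient together with $\sum_j n_j = s$. Without this disjointness statement, or a substitute for it, your argument does not yet prove the lemma.
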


\begin{proof}
As in the proof of Lemma~\ref{lemma:hw_decomp_elements}, it is sufficient to show the claim when $r$ is a minuscule node by using virtual crystals when $r$ is cominuscule.

By Lemma~\ref{lemma:hw_decomp_elements}, it is sufficient to show that the sets
\[
X_b := \{ i \in I_0 \mid \varphi_i(b) = 1 \} \text{ for all } b \in B(\clfw_r) \text{ is a $J$-highest weight element},
\]
are disjoint as the weight is simply counting the different $J$-highest weight elements $b_k \in B(\clfw_r)$ occurring $b = b_1 \otimes \cdots \otimes b_s$.
Indeed, the crystal axiom (1) from Definition~\ref{def:abstract_crystal} states that $\varphi_i(b)$ contributes $\clfw_i$ to the weight.

By Proposition~\ref{prop:minuscule_Decomposition}, any $J$-connected component of $B(\clfw_r)$ must be a minuscule representation.
We first consider the case when $\g_J$ is a simple Lie algebra (equivalently the corresponding Dynkin diagram is connected).
A minuscule representation must have a highest weight of the form $\clfw_{r'}$ for some $r'$ that is a minuscule in $\g_J$.
Thus for any $J$-highest weight element $b \in B(\clfw_r)$, we must have $\abs{X_b} \leq 1$.
If $X_b = \emptyset$, then $b$ is the unique \emph{lowest} weight element $u_{\emptyset}$ of $B(\clfw_r)$, and hence $b \in B(\clfw_r) = \emptyset$ if and only if $b = u_{\emptyset}$.
Hence, we only need to consider the case $\abs{X_b} = 1$.

We first consider the case $\varphi_r(b) = 1$, so necessarily $\varphi_i(b) = 0$ for all $i \in J$.
Since any $r$-string must have length at most $1$ in a minuscule crystal, we must have $\varepsilon_r(b) = 0$.
Hence, $b$ is a highest weight element, which means $b = u_{\clfw_r}$.

Next we assume $X_b = X_{b'} = \{i\}$ for some $i \neq r$, and let $w_b$ and $w_{b'}$ be the corresponding minimal length coset representatives that give a path to $b$ and $b'$, respectively.
By Lemma~\ref{lemma:total_order}, these elements are comparable, and we can assume $b \geq b'$ without loss of generality.
Let $\beta = \wt(b) - \wt(b') \in Q^-$, and so $w_{\beta} = w_b w_{b'}^{-1}$.
However, $w_{\beta}$ stabilizes $\Lambda_i - \Lambda_r$, which implies that $w_{\beta} = 1$ as the weight must shift by $\beta$.
Hence, we must have $b = b'$ since elements are parameterized by their Weyl group elements.

For the case when $\g_J$ has reducible type, we can apply the above argument on each component.
Therefore, the sets $\{X_b\}$ are disjoint as claimed, with the weights of the $J$-highest weight elements given by
\[
\wt(b) = \begin{cases}
\clfw_r & \text{if } b = u_{\clfw_r}, \\
-\clfw_r + \sum_{i \in X_b} \clfw_i & \text{otherwise}.
\end{cases}
\]
Thus, the weight of any $J$-highest weight element $b \in B(s\clfw_r) \subseteq B(\clfw_r)^{\otimes s}$ is determined by the number $J$-highest weight of $B(\clfw_r)$ that appear.
\end{proof}

\begin{lem}
\label{lemma:gr_decomp}
Let $r$ be a special node.
We have
\[
T_{s\clfw_r} \otimes \aff{\B}^{J,s} \iso B(s \check{\Lambda}_0)
\]
as $U_q(\aff{\g}_r)$-crystals.
\end{lem}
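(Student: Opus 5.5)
The plan is to transport everything along the diagram automorphism $\tau_r$, which satisfies $\tau_r(0) = r$ and therefore identifies the Levi subalgebra $\aff{\g}_r$ (index set $I \setminus \{r\}$) with the classical algebra $\g = \aff{\g}_0$. Under this identification $\check{\Lambda}_0$ corresponds to $\clfw_r$. The statement is then equivalent to showing that $T_{s\clfw_r} \otimes \aff{\B}^{J,s}$, with its $i$-arrows for $i \in I \setminus \{r\}$ relabelled by $\tau_r^{-1}$, is isomorphic to $B(s\clfw_r)$ as a $U_q(\g)$-crystal.

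\textbf{Step 1: the arrows with $i \in J$.} By \eqref{eq:g0_decomp}, the $J$-arrows of $\aff{\B}^{J,s}$ are those of $B(s\clfw_r) \subseteq B(\infty)$ restricted to $\Phi^J$-supported Lusztig data. By Proposition~\ref{prop:twisted_roots}, $w^J$ maps $\alpha_i$ to $\alpha_{\tau_r^{-1}(i)}$ for $i \in J$ and maps $\alpha_r$ to $-\Theta$. Hence conjugating by $w^J$, through Kashiwara's Weyl group action or equivalently the braid-move description of PBW crystals (reordering the convex order by $w^J$), turns the $J$-operators into the operators $f_{\tau_r^{-1}(i)}$ and turns the root $\Theta$ into a simple root direction.

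\textbf{Step 2: the $0$-arrows.} Concretely, I would pick $\ii_0 = \ii^J \cdot \ii_J$ so that $\Theta$ is the last root of the convex order on $(\Phi^J)^+$; Remark~\ref{rem: w^J} guarantees $s_{i_1} = s_{\tau^{-1}(0)}$ is available at the appropriate end. In the PBW crystal for the reversed order, adding or removing $\oneb_\Theta$ then acts exactly as $\ft^*$ or $\et^*$ does at the last position, matching \eqref{eq:affine structure}. Via the $*$-involution, this exhibits $\aff{\B}^J$ restricted to $I \setminus \{r\}$ as the crystal $\B^{J'}$ (the parabolic Verma crystal) for $J' = \tau_r^{-1}(J) \cup \{\tau_r^{-1}(0)\}$ relabelled. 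The cutoff condition $\varepsilon_r^*(b) \le s$ should transform into the cutoff defining $B(s\clfw_r)$ inside it, namely $\varepsilon^*_{\tau_r(r)}$-type bounds with $\varepsilon^*_j = 0$ for $j \in J'$.

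\textbf{Alternative in the minuscule case.} If the conjugation bookkeeping becomes too messy, I would fall back on a more direct argument. First show that $T_{s\clfw_r} \otimes \aff{\B}^{J,s}$ is a regular $U_q(\aff{\g}_r)$-crystal with a unique highest weight element, namely the element with $c_\beta = 0$ except $c_\Theta = s$, which should be killed by all $\et_i$ for $i \neq r$. Then compare characters using $B^{r,s} \iso B(s\clfw_r)$ together with the symmetry $\tau_r$ of $B^{r,s}$, and conclude with Kashiwara's uniqueness of crystals of normal highest weight type. Regularity would come from realizing $f_0$ as the root operator $f_{-\Theta}$ conjugated by $w^J$. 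The RPP model of Proposition~\ref{prop:direct_limit_finite_type} gives a concrete check of connectivity: $\Theta$ is the maximal element of the heap, so $f_0$ acts by removing an entry at the top of $\mcP_r$. The cominuscule case would again go through the virtual crystal embedding into the dual type, as in Lemma~\ref{lemma:hw_decomp_elements}.

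\textbf{Main obstacle.} The hardest step is proving that the $0$-operator defined by simply adding $\oneb_\Theta$ really coincides with the conjugated root operator. This requires showing that, in the chosen convex order, the braid moves realizing $w^J$ never mix $\Theta$ with other coordinates. I would try to deduce this from full commutativity (Remark~\ref{rem: w^J}), so that only 2-term braid moves occur, together with the fact that $\Theta$ is extremal in $(\Phi^J)^+$.
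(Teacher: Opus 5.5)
Your Steps 1--2 follow the paper's route. The paper's proof notes $\alpha_0 = -\Theta + \delta$ and uses Proposition~\ref{prop:twisted_roots} to get $\proj \Delta_r = \{\alpha_i\}_{i \in J} \cup \{-\Theta\} = w^J \Delta$. It then declares $\ft_0 = \ft_{-\Theta}$ for the $w^J$-conjugated Cartan subalgebra (reindexing Lusztig data by $\cc_\beta \mapsto \cc_{w^J \beta}$), and transports along $\tau_r$ using \eqref{eq:g0_decomp}. So the step you single out as the main obstacle is exactly the one the paper treats as immediate, and your proposal does not close it either. Where you try to make that step concrete, the mechanisms you propose do not work:
\begin{itemize}
\item A star operator acts on Lusztig data by simply incrementing a coordinate only at the last position of a reduced word of all of $w_0$. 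That root is always simple, $\beta_N = \alpha_{i_N^*}$. But $\Theta$ is last only within $\ii^J$, sits at position $\ell(w^J) < N$ of $\ii_0$, and is not simple (unless $\g = \mathfrak{sl}_2$). So $\pm \oneb_\Theta$ is not $\ft^*$ or $\et^*$ at the last position.
\item Kashiwara's Weyl group action does not in general conjugate $\ft_i$ into $\ft_j$ when $w \alpha_i = \alpha_j$. For $w = s_1 s_2$ in type $A_2$ this would force $s_1 \ft_2 s_1 = s_2 \ft_1 s_2$, contradicting the example in Section~\ref{sec:crystals}. So Step 1 is not automatic either.
\item $\aff{\B}^J$ restricted to $I \setminus \{r\}$ is not a parabolic Verma crystal. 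Here $\et_0$ is defined on every datum, while $\ft_0$ and all $\ft_j$ ($j \in J$) kill the zero datum. Hence the $0$-strings are unbounded above and the zero datum is a lowest weight element. The $\ast$-involution does not reverse arrows, and your $J' = \tau_r^{-1}(J) \cup \{\tau_r^{-1}(0)\}$ is all of $I_0$.
\end{itemize}

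The fallback fails at its first step: $s \oneb_\Theta$ is generally not $(I \setminus \{r\})$-highest weight. For $A_2^{(1)}$ with $r = s = 1$, the $U_q(\aff{\g}_1)$-crystal $T_{\clfw_1} \otimes \aff{\B}^{J,1}$ is the chain $\oneb_{\alpha_1} \xrightarrow{2} \oneb_\Theta \xrightarrow{0} (0,0)$, so $\et_2 \oneb_\Theta = \oneb_{\alpha_1}$. For $D_n^{(1)}$ with $r = 1$, $\oneb_\Theta$ is the element $\overline{2}$ of the vector crystal, and $\et_2 \overline{2} = \overline{3}$. (The actual highest weight element has classical weight $s(\clfw_{\tau_r(r)} - \clfw_r)$, with $\clfw_0 := 0$.) Also, Kashiwara's uniqueness theorem applies to crystals of integrable modules. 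Regularity, a unique highest weight element and the right character therefore do not suffice unless you show the crystal is of that kind, or check, e.g., Stembridge's axioms. Finally, the same $A_2^{(1)}$ example shows that identifying $\check{\Lambda}_0$ with $\clfw_r$ is wrong: transport by $\tau_r$ sends $\clfw_r$ to $\check{\Lambda}_{\tau_r(r)}$, and the chain above is $B(\check{\Lambda}_2)$, not $B(\check{\Lambda}_0)$. This slip is shared with the statement and with the paper's $B(s\clfw_r) \iso_{\tau_r} B(s\check{\Lambda}_0)$, which holds only when $\tau_r$ is an involution. It does not affect Theorem~\ref{thm:main1}, since $B^{r,s}$ has the same $U_q(\aff{\g}_r)$-decomposition $B(s\check{\Lambda}_{\tau_r(r)})$, but the weight should be stated correctly.
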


\begin{proof}
We have $\alpha_0 = -\Theta + \delta$.
Note that $\tau_r \Delta_0 = \Delta_r$ and $\proj \Delta_r = \{\alpha_i \mid i \in J \} \cup \{-\Theta\} = w^J \Delta$ by Proposition~\ref{prop:twisted_roots}. Hence the crystal operator $f_0 = f_{-\Theta}$ as we have $\cc_{\beta} \mapsto \cc_{w^J \beta}$ for the simple roots $w^J \Delta$ from the conjugated Cartan subalgebra. Hence the claim follows from the fact that $B(s\clfw_r) \iso_{\tau_r} B(s\check{\Lambda}_0)$ and~\eqref{eq:g0_decomp}.
\end{proof}

Now, we are ready to prove Theorem \ref{thm:main1}.

\begin{proof}[Proof of Theorem~$\ref{thm:main1}$]
Our approach is to apply Theorem~\ref{thm:reconstruction}.
The first condition of Theorem~\ref{thm:reconstruction} is
\begin{equation}
\label{eq:g0_isomorphisms}
T_{s\clfw_r} \otimes \aff{\B}^{J,s} \iso B(s\clfw_r) \iso B^{r,s}
\end{equation}
as $U_q(\g_0)$-crystals by~\eqref{eq:g0_decomp} and $B^{r,s}$ is a minuscule KR crystal.
The second condition is Lemma~\ref{lemma:gr_decomp} and $B^{r,s} \iso B(s\check{\Lambda}_0)$ as $U_q(\aff{\g}_r)$-crystals from the diagram automorphism $\tau_r$ acting on $B^{r,s}$.
The third condition follows from Lemma~\ref{lemma:decomp_by_weight}, the level-zero condition on the weight as $U_q'(\aff{\g})$-crystals, and the isomorphisms~\eqref{eq:g0_isomorphisms}.
\end{proof}

%%%%%%%%%%%%%%%%%%%%%%%%%%%%%%%%%%%%%%%%
\section{Combinatorial characterization of \texorpdfstring{$\B^{J,s}$}{BJs}} \label{sec:path characterization}

Recall that
\[
	\B^J \hookrightarrow \B_{\mathbf{i}_0}, \qquad
	T_{s\clfw_r} \otimes \aff{\B}^{J,s} \iso B^{r,s},
\]
where $\aff{\B}^{J,s} = \{ \cc \in \B^J \mid \varepsilon_r^{\ast}(\cc) \le s \}$.
In this section, we give a combinatorial formula for $\varepsilon_r^{\ast}(\cc)$.

\subsection{Paths on root arrays}

First, we introduce certain directed graphs and paths on them for types $A_n$ and $D_n$.
For a directed graph $G = (V, E)$, a (directed) \defn{path} from $v \to v'$ is a sequence of vertices $\pp = (p_1, p_2, \dotsc, p_s)$ (for some $s > 0$) such that $p_1 = v$ and $p_s = v'$ with $(p_k, p_{k+1}) \in E$ for all $k = 1, \dotsc, s-1$.

%%%%%%%%%%%%%%%%%%%%%%%%%%%%%%%%%%%%%%%%
\subsubsection{Type \texorpdfstring{$A_n$}{An}} \label{subsubsec:A-paths}

Suppose $\g = \mathfrak{sl}_n$.
For $i \in I_0$, we write $\alpha_i = \epsilon_i - \epsilon_{i+1}$, where $P = \bigoplus_{i=1}^n \ZZ \epsilon_i$ with $(\epsilon_i, \epsilon_j) = \delta_{ij}$.
Note that any positive root is of the form $\epsilon_i - \epsilon_j$ for $1 \le i < j \le n$.
Fix any node $r \in I_0$, all of which are special.
Then one can check 
\[
	(\Phi^J)^+ = \{ \epsilon_i - \epsilon_j \, \mid \, 1 \le i \le r < j \le n \}.
\]
Take $\ii^J \in R((w^J)^{-1})$ such that the convex order on $(\Phi^J)^+$ induced from $\ii^J$ by \eqref{eq:beta_k} is given by
\[
	\epsilon_a - \epsilon_b \prec \epsilon_c - \epsilon_d
	\quad \iff \quad
	b < d \text{ or } (b = d \text{ and } a > c).
\]

\begin{df} \label{df:A-path}
Let $\mathscr{G}(A_n, r)$ denote the digraph on the roots $(\Phi^J)^+$ with edges from $\epsilon_{a-1} - \epsilon_b$, for some $a < b$, to either $\epsilon_a - \epsilon_b$ or $\epsilon_{a-1} - \epsilon_{b+1}$.
Let $\mathscr{R}(A_n, r)$ denote the set of paths in $\mathscr{G}(A_n, r)$ from $\epsilon_1 - \epsilon_{r+1} \to \epsilon_r - \epsilon_n$.
\end{df}

The digraph $\mathscr{G}(A_n, r)$ is similar to the Hasse diagram of the minuscule poset $\mcP_r$, but the correspondence with the roots is different.
Subsequently, we draw them with a different convention.

\begin{ex}
Let $n = 4$ and $r = 2$.
Then $\ii^J = (2,1,3,2)$ and the convex order is given by 
\(
	\epsilon_2 - \epsilon_3 \prec \epsilon_1 - \epsilon_3 \prec \epsilon_2 - \epsilon_4 \prec \epsilon_1 - \epsilon_4.
\)
The two paths in $\mathscr{R}(A_3, 2)$ are
\[	
	\vcenter{
	\xymatrix@C=0em @R=0.5em{
		& \overset{\epsilon_1 - \epsilon_3}{\bullet} \ar@{->}[dl] & \\
		\overset{\epsilon_2 - \epsilon_3}{\bullet} \ar@{->}[dr] & &\!\!\!\overset{\epsilon_{1} - \epsilon_{4}}{\bullet} \\
		& \overset{\epsilon_2 - \epsilon_4}{\bullet} & 
	}},
	\qquad \qquad
	\vcenter{
	\xymatrix@C=0em @R=0.5em{
		& \overset{\epsilon_1 - \epsilon_3}{\bullet} \ar@{->}[dr] & \\
		\overset{\epsilon_2 - \epsilon_3}{\bullet} & &\!\!\!\overset{\epsilon_1 - \epsilon_4}{\bullet} \ar@{->}[dl] \\
		& \overset{\epsilon_2 - \epsilon_4}{\bullet} & 
	}},
\]
where we have omitted edges not in the path.
Note that we will use this same convention in other examples below.
\end{ex}

\subsubsection{Type \texorpdfstring{$D_n$}{Dn}}
Suppose $\g = \mathfrak{so}_{2n}$ for $n \ge 4$.
Under the same convention as in Section \ref{subsubsec:A-paths}, we write $\alpha_i = \epsilon_i - \epsilon_{i+1}$ for $i \neq n$, and $\alpha_n = \epsilon_{n-1} + \epsilon_n$.
When $r = 1$, we have
\[
	(\Phi^J)^+ = \{ \epsilon_1 \pm \epsilon_i \, \mid \, 2 \le i \le n \}.
\]
Take $\ii^J \in R(w^J)$ such that the convex order on $(\Phi^J)^+$ induced from $\ii^J$ is given by $\epsilon_1 - \epsilon_n \prec \epsilon_1 + \epsilon_n$ and 
\[
	\begin{aligned}
	\epsilon_1 - \epsilon_a \prec \epsilon_1 - \epsilon_b \iff a < b, \\
	\epsilon_1 + \epsilon_a \prec \epsilon_1 + \epsilon_b \iff a > b.
	\end{aligned}
\]

\begin{df} \label{df:D-path}
Let $\mathscr{G}(D_n, 1)$ denote the digraph on the roots $(\Phi^J)^+$ with edges
\begin{enumerate}[(1)]
	\item $\epsilon_1 \pm \epsilon_a \to \epsilon_1 \pm \epsilon_{a \mp 1}$ for all $a < n-1$;
	\item $\epsilon_1 - \epsilon_{n-1} \to \epsilon_1 - \epsilon_n$; and $\epsilon_1 \pm \epsilon_n \to \epsilon_1 + \epsilon_{n-1}$.
\end{enumerate}
Let $\mathscr{R}(D_n, 1)$ denote the set of paths $\epsilon_1 - \epsilon_2 \to \epsilon_1 + \epsilon_3$ and $\epsilon_1 - \epsilon_3 \to \epsilon_1 + \epsilon_2$.
\end{df}

The graph $\mathscr{G}(D_n, 1)$ is
\[
	\xymatrix@R=-0.5em @C=0.8em{
		 	             &          &        &          & \overset{\epsilon_1 - \epsilon_n}{\bullet} \ar@{->}[dr]  \\
		 \overset{\epsilon_1 - \epsilon_2}{\bullet} \ar@{->}[r] &  \overset{\epsilon_1 - \epsilon_3}{\bullet} \ar@{->}[r] & \cdots \ar@{->}[r] & \overset{\epsilon_1 - \epsilon_{n-1}}{\bullet} \ar@{->}[ur] \ar@{->}[dr] & & \overset{\epsilon_1 + \epsilon_{n-1}}{\bullet} \ar@{->}[r] & \cdots \ar@{->}[r] & \overset{\epsilon_1 + \epsilon_3}{\bullet} \ar@{->}[r] & \overset{\epsilon_1 + \epsilon_2}{\bullet}   \\
		 	             &          &        &          &\overset{\epsilon_1 + \epsilon_n}{\bullet} \ar@{->}[ur]
	},
\]
which coincides with the crystal graph for $B(\Lambda_1)$ of type $D_{n-1}$ for $n \geq 4$.

\begin{ex}
There are always four paths in $\mathscr{R}(D_n, 1)$ for all $n \ge 4$.
For example, the paths in $\mathscr{R}(D_4, 1)$ are
\[
\begin{split}
	\vcenter{
	\xymatrix@R=0em @C=0.65em{
	&          & \overset{\epsilon_1-\epsilon_4}{\bullet} \ar@{->}[dr]  \\
	\overset{\epsilon_1-\epsilon_2}{\bullet} \ar@{->}[r] & \overset{\epsilon_1-\epsilon_3}{\bullet} \ar@{->}[ur] & & \overset{\epsilon_1+\epsilon_3}{\bullet} & \overset{\epsilon_1+\epsilon_2}{\bullet}     \\
	&          & \overset{\epsilon_1 + \epsilon_4}{\bullet}
	}},
	\qquad
	\vcenter{
	\xymatrix@R=0em @C=0.65em{
	&          & \overset{\epsilon_1-\epsilon_4}{\bullet}  \\
	\overset{\epsilon_1-\epsilon_2}{\bullet} \ar@{->}[r] & \overset{\epsilon_1-\epsilon_3}{\bullet} \ar@{->}[dr] & & \overset{\epsilon_1+\epsilon_3}{\bullet} & \overset{\epsilon_1+\epsilon_2}{\bullet}     \\
	&          & \overset{\epsilon_1 + \epsilon_4}{\bullet} \ar@{->}[ur]
	}}, \\
	\vcenter{
	\xymatrix@R=0em @C=0.65em{
	&          & \overset{\epsilon_1-\epsilon_4}{\bullet} \ar@{->}[dr]  \\
	\overset{\epsilon_1-\epsilon_2}{\bullet} & \overset{\epsilon_1-\epsilon_3}{\bullet} \ar@{->}[ur] & & \overset{\epsilon_1+\epsilon_3}{\bullet} \ar@{->}[r] & \overset{\epsilon_1+\epsilon_2}{\bullet}     \\
	&          & \overset{\epsilon_1 + \epsilon_4}{\bullet}
	}},
	\qquad
	\vcenter{
	\xymatrix@R=0em @C=0.65em{
	&          & \overset{\epsilon_1-\epsilon_4}{\bullet}  \\
	\overset{\epsilon_1-\epsilon_2}{\bullet} & \overset{\epsilon_1-\epsilon_3}{\bullet} \ar@{->}[dr] & & \overset{\epsilon_1+\epsilon_3}{\bullet} \ar@{->}[r] & \overset{\epsilon_1+\epsilon_2}{\bullet}     \\
	&          & \overset{\epsilon_1 + \epsilon_4}{\bullet} \ar@{->}[ur]
	}}.
\end{split}
\]
\end{ex}

\begin{rem}
While we do not explicitly need them in this paper, the corresponding combinatorics for the other special nodes when $\g$ is of type $D_n$ or $E_{6,7}$ are known.
	\begin{itemize}
		\item[(D)] We refer the reader to \cite[Sec.~3.2]{JK19} for the definition of $\mathscr{R}(D_n,n)$, which is denoted there by $\Delta_n$, and to \cite[Def.~3.10]{JK19} for (double) paths on $\mathscr{G}(D_n,n)$.
		The (double) paths on $\mathscr{G}(D_n,n-1)$ are defined analogously.
		%For $r \in \{n-1,n\}$, we simply refer to a double path on $\mathscr{R}(D_n,r)$ as a \emph{path}.
		
		\item[(E)] For type $E_6$ (resp.~$E_7$) and a special node $r$, we define the set of a (triple, resp.~quadruple) paths $\mathscr{R}(E_6,r)$ (resp.~$\mathscr{R}(E_7,r)$) are defined in~\cite[Def.~6.2]{Jang22} and subject to a number of technical conditions.
	\end{itemize}
\end{rem}

%%%%%%%%%%%%%%%%%%%%%%%%%%%%%%%%%%%%%%%%
\subsubsection{Types \texorpdfstring{$B_n$}{Bn} and  \texorpdfstring{$C_n$}{Cn}}

Suppose $\g = \mathfrak{so}_{2n+1}$ or $\mathfrak{sp}_{2n}$.
Set $\kappa = 1$ if $\g = \mathfrak{so}_{2n+1}$, and $\kappa = 2$ if $\g = \mathfrak{sp}_{2n}$.
Under the same convention as in Section \ref{subsubsec:A-paths}, we write $\alpha_i = \epsilon_i - \epsilon_{i+1}$ for $i \neq n$, and $\alpha_n = \kappa \epsilon_n$.
Then we have
\[
	(\Phi^J)^+ = \begin{cases}
		\{ \epsilon_1 \pm \epsilon_i \, \mid \, 2 \le i \le n \} \cup \{ \kappa \epsilon_1  \} & \text{if $r = 1$, } \\
		\{ \epsilon_i + \epsilon_j \, \mid \, 1 \le i < j \le n \} \cup \{ \kappa \epsilon_i \, \mid \, 1 \le i \le n  \} & \text{if $r = n$. }
	\end{cases}
\]
For $r \in \{ 1, n \}$, we take $\ii^J \in R(w^J)$ such that the convex order on $(\Phi^J)^+$ induced from $\ii^J$ is as follows:
\begin{enumerate}
	\item if $r = 1$, then $\epsilon_1 - \epsilon_n \prec \kappa \epsilon_1 \prec \epsilon_1 + \epsilon_n$ and 
	\begin{align*}
		\epsilon_1 - \epsilon_a \prec \epsilon_1 - \epsilon_b \iff a < b, && \qquad
		\epsilon_1 + \epsilon_a \prec \epsilon_1 + \epsilon_b \iff a > b.
	\end{align*}
	
	\item if $r = n$, then 
	\begin{align*}
	  \epsilon_a+\epsilon_b\prec\epsilon_c+\epsilon_d
	  &\iff b>d\ \text{or}\ (b=d\text{ and }a>c), \\
	  \kappa \epsilon_a\prec \kappa \epsilon_c
	  &\iff a>c, \\
	  \kappa \epsilon_a\prec\epsilon_c+\epsilon_d
	  &\iff a\ge d, \\
	  \epsilon_a+\epsilon_b\prec \kappa \epsilon_c
	  &\iff b>c.
	\end{align*}
\end{enumerate}
For $X \in \{ B, C \}$ and $r \in \{ 1, n \}$, we define $\mathscr{R}(X_n, r)$ as follows, which is identified with $(\Phi^J)^+$.

\begin{df}
For $X_n = B_n, C_n$ and $r = 1, n$, let $\mathscr{G}(X_n, r)$ denote the digraph on the roots $(\Phi^J)^+$ with edges as follows.
\begin{itemize}
	\item[($1$)] If $r = 1$, then the edges are $\epsilon_1 \pm \epsilon_a \to \epsilon_1 \pm \epsilon_{a \mp 1}$ for all $a < n$ and $\epsilon_1 - \epsilon_n \to \kappa \epsilon_1 \to \epsilon_1 + \epsilon_n$.
	Let $\mathscr{R}(X_n, 1)$ be the pair of paths $\epsilon_1 - \epsilon_2 \to \epsilon_1 + \epsilon_3$ and $\epsilon_1 - \epsilon_3 \to \epsilon_1 + \epsilon_2$.
	
	\item[($n$)] If $r = n$, then the edges are from $\epsilon_a + \epsilon_b$, for $a < b$, given by the following two cases. If $\abs{a - b} > 1$, then it can go to either $\epsilon_{a+1} + \epsilon_b$ or $\epsilon_a + \epsilon_{b-1}$ and if $\abs{a - b} = 1$, then it can go to either $\kappa \epsilon_a$ or $\kappa \epsilon_{a+1}$.
	Let $\mathscr{R}(X_n, n)$ be the set of paths $\epsilon_1 + \epsilon_n \to \kappa \epsilon_i$ for some $i \in I_0$.
\end{itemize}
Let
 a \defn{path} in $\mathscr{R}(X_n, r)$ is a sequence $\pp = (\beta_k \in (\Phi^J)^+)_{k=1}^s$ for some $s \ge 1$ defined as follows.
\end{df}

The graph $\mathscr{G}(X_n, 1)$ is simply the path
\[
	\xymatrix@R=-0.5em @C=0.7em{
		 \overset{\epsilon_1 - \epsilon_2}{\bullet} \ar@{->}[r] &  \overset{\epsilon_1 - \epsilon_3}{\bullet} \ar@{->}[r] & \cdots \ar@{->}[r] & \overset{\epsilon_1 - \epsilon_n}{\bullet} \ar@{->}[r] & \overset{\kappa \epsilon_1}{\bullet} \ar@{->}[r] & \overset{\epsilon_1 + \epsilon_n}{\bullet} \ar@{->}[r] & \cdots \ar@{->}[r] & \overset{\epsilon_1 + \epsilon_3}{\bullet} \ar@{->}[r] & \overset{\epsilon_1 + \epsilon_2}{\bullet} 
	}.
\]
The graph $\mathscr{G}(X_n, n)$ has size given by the $(n-1)$-th triangular number and locally has edges given by the triangles, with $a < b - 1$,
\begin{equation*} %\label{eq:elementary triangle}
\begin{split}
	\vcenter{
	\xymatrix@C=0.5em @R=0.2em{
		& \overset{\epsilon_a + \epsilon_b}{\bullet} \ar@{->}[dr] \ar@{->}[dl] & \\
		\overset{\epsilon_{a+1} + \epsilon_b}{\bullet} & &\!\!\!\overset{\epsilon_a + \epsilon_{b-1}}{\bullet}
	}}
	\quad
	\text{ or }
	\quad
	\vcenter{
	\xymatrix@C=0.5em @R=0.2em{
		& \overset{\epsilon_a+\epsilon_{a+1}}{\bullet} \ar@{->}[dr] \ar@{->}[dl] & \\
		\overset{\kappa \epsilon_{a+1}}{\bullet} & &\!\overset{\kappa \epsilon_{a}}{\bullet}
	}}
\end{split}
\end{equation*}

\begin{ex} \label{ex:paths in B3, C3}
Let $X = B, C$.
\begin{enumerate}[(1)]
	\item The two paths in $\mathscr{R}(X_3, 1)$ are
		\[
			\xymatrix@C=1em{
				\overset{\epsilon_1-\epsilon_2}{\bullet} \ar@{->}[r] & \overset{\epsilon_1-\epsilon_3}{\bullet} \ar@{->}[r] & \overset{\kappa \epsilon_1}{\bullet} \ar@{->}[r] & \overset{\epsilon_1+\epsilon_3}{\bullet} & \overset{\epsilon_1+\epsilon_2}{\bullet} 
			}\,, \qquad
			\xymatrix@C=1em{
				\overset{\epsilon_1-\epsilon_2}{\bullet} & \overset{\epsilon_1-\epsilon_3}{\bullet} \ar@{->}[r] & \overset{\kappa \epsilon_1}{\bullet} \ar@{->}[r] & \overset{\epsilon_1+\epsilon_3}{\bullet} \ar@{->}[r] & \overset{\epsilon_1+\epsilon_2}{\bullet} 
			}.
		\]
		
	\item The paths in $\mathscr{R}(X_3, n)$ are
		\[
			\vcenter{
			\xymatrix@C=-0.95em @R=0.6em{
				& & \overset{\epsilon_1+\epsilon_3}{\bullet} \ar@{->}[dl] & & \\
				& \overset{\epsilon_2+\epsilon_3}{\bullet} \ar@{->}[dl] & & \overset{\epsilon_1 + \epsilon_2}{\bullet} & \\
				\overset{\kappa \epsilon_3}{\bullet} & & \overset{\kappa \epsilon_2}{\bullet} & & \overset{\kappa \epsilon_1}{\bullet}
			}},
			\quad\!
			\vcenter{
			\xymatrix@C=-0.95em @R=0.5em{
				& & \overset{\epsilon_1+\epsilon_3}{\bullet} \ar@{->}[dl] & & \\
				& \overset{\epsilon_2+\epsilon_3}{\bullet} \ar@{->}[dr] & & \overset{\epsilon_1+\epsilon_2}{\bullet} & \\
				\overset{\kappa \epsilon_3}{\bullet} & & \overset{\kappa \epsilon_2}{\bullet} & & \overset{\kappa \epsilon_1}{\bullet}
			}},
			\quad\!
			\vcenter{
			\xymatrix@C=-0.95em @R=0.5em{
				& & \overset{\epsilon_1+\epsilon_3}{\bullet} \ar@{->}[dr] & & \\
				& \overset{\epsilon_2+\epsilon_3}{\bullet} & & \overset{\epsilon_1+\epsilon_2}{\bullet} \ar@{->}[dl] & \\
				\overset{\kappa \epsilon_3}{\bullet} & & \overset{\kappa \epsilon_2}{\bullet} & & \overset{\kappa \epsilon_1}{\bullet}
			}},
			\quad\!
			\vcenter{
			\xymatrix@C=-0.95em @R=0.5em{
				& & \overset{\epsilon_1+\epsilon_3}{\bullet} \ar@{->}[dr] & & \\
				& \overset{\epsilon_2+\epsilon_3}{\bullet} & & \overset{\epsilon_1+\epsilon_2}{\bullet} \ar@{->}[dr] & \\
				\overset{\kappa \epsilon_3}{\bullet} & & \overset{\kappa \epsilon_2}{\bullet} & & \overset{\kappa \epsilon_1}{\bullet}
			}}.
		\]
\end{enumerate}
\end{ex}

%%%%%%%%%%%%%%%%%%%%%%%%%%%%%%%%%%%%%%%%
\subsection{Path description of \texorpdfstring{$\varepsilon_r^*$}{ep r*}}

Let $\gaff$ be an affine Lie algebra, and fix a special node~$r$ (see Table~\ref{tab:dynkins}).
%In this case, we have the classical Lie algebra $\g$ is of type $X_n$ for $X = A,B,C,D,E$.
If $\g$ is of type $B_n$, for $\beta \in \Phi^+$, we define $\gamma_\beta = 2$ if $\beta$ is long, and $\gamma_\beta = 1$ if $\beta$ is short.
For $\cc = \left( c_\beta \right) \in \B^J$ and a path $\pp \in \mathscr{R}(X_n, r)$, we define
\begin{equation} \label{eq:norm of cc on pp}
	\Abs{\cc}_{\pp} 
	= 
	\begin{cases}
		\displaystyle 
		\sum_{\beta \text{ lying on } \pp} \gamma_\beta c_\beta & \text{if $\g$ is of type $B_n$,} \\[1ex]
		\displaystyle 
		\sum_{\beta \text{ lying on } \pp} c_\beta & \text{otherwise.}
	\end{cases}
\end{equation}

\begin{thm} \label{thm:path formula}
	Suppose that $\g$ is a simple finite dimensional Lie algebra, and set $J = I_0 \setminus \{r\}$, where $r$ is a special node.
	For $\cc \in \B^J$, we have
	\[
		\varepsilon_r^{\ast}(\cc) 
		= \max \{ \Abs{\cc}_{\pp} \mid {\pp \in \mathscr{R}(X_n, r)} \}.
	\]
\end{thm}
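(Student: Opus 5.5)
The proof splits into the simply-laced cases and the cases $\g$ of type $B_n$ or $C_n$.

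For types $A_n$, $D_n$ and $E_{6,7}$, the formula for $\varepsilon_r^{\ast}$ as a maximum over paths is already known, by~\cite{Greene74,Kwon13} in type $A$, \cite{JK19} in type $D$ and \cite{Jang22} in type $E$. I will only check that the convex orders fixed above agree with the ones used there, up to commutation moves. This is harmless, because the reduced words of $w^J$ are fully commutative (Remark~\ref{rem: w^J}). As a consequence the Lusztig data on $\B^J$ does not depend on which linear extension of the minuscule heap is chosen.

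For types $B_n$ and $C_n$, I will use virtual crystals coming from diagram foldings, following~\cite{K96}. Type $C_n$ is realized inside $A_{2n-1}$, and type $B_n$ inside $D_{n+1}$. The steps are as follows.

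\begin{enumerate}[(1)]
\item Fix the folding $\phi$ and the scaling factors. For $C_n \subset A_{2n-1}$ all $c_i = 1$, so $f_i \mapsto \virtual{f}_i \virtual{f}_{2n-i}$ for $i<n$ and $f_n \mapsto \virtual{f}_n$. For $B_n \subset D_{n+1}$, the node $n$ of $B_n$ corresponds to the pair $\{n, n+1\}$ of $D_{n+1}$, and $c_i = 2$ for $i<n$, $c_n = 1$. This makes a long root $\beta$ correspond to twice a virtual root and explains the weights $\gamma_\beta$.
\item Check that the special node $r$ pulls back to special nodes of the ambient type. For $C_n$, the node $r=n$ goes to $n$ in $A_{2n-1}$, and $r=1$ goes to $\{1, 2n-1\}$. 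For $B_n$, the node $r=1$ goes to $1$ in $D_{n+1}$, and $r=n$ goes to the spin nodes.
\item Show that the virtual embedding $B(\infty) \hookrightarrow \virtual{B}(\infty)$ is compatible with the star structure, so that $\virtual{\varepsilon}^{\ast}_{k} = c_r\,\varepsilon_r^{\ast}$ for $k \in \phi^{-1}(r)$. This should follow because $\ast$ commutes with the diagram automorphism, and the virtual image is the fixed-point set.
\item Express the virtual image of $\B^J$ in virtual PBW coordinates. Choose $\ii^J$ so that it is the folding of an $\sigma$-invariant reduced word $\virtual{\ii}^{\virtual{J}}$. Then, by the folding results of~\cite{NS03,NS05II,Kwon18a}, the virtual Lusztig datum of $\cc$ is obtained as follows: each root $\beta$ is replaced by its $\sigma$-orbit of roots, each of which receives $c_\beta$ or a doubled value.
\item Apply the ADE path formula to this virtual datum. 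Then show that the maximum over virtual paths equals (a multiple of) the maximum over $\mathscr{R}(X_n,r)$. One inequality comes from restricting to $\sigma$-symmetric paths. The other comes from pairing a path with its $\sigma$-image and using $\max(x,y) \ge (x+y)/2$. After dividing by $c_r$, this gives the claimed formula, including the weights $\gamma_\beta$ in type $B$.
\end{enumerate}

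The main obstacle is step (4), namely the precise matching of Lusztig data under folding. A folded PBW crystal reproduces Lusztig's parametrization only for adapted words, and a root of $\g$ can correspond to one virtual root or to two, depending on its length. I would first try to verify directly that the convex orders chosen above are images of the adapted orders used in type $A$ (Definition~\ref{df:A-path}) and type $D$ (Definition~\ref{df:D-path}). I would then check on small rank, for example $C_2$, $C_3$ and $B_3$, that the paths in $\mathscr{G}(X_n,r)$ are exactly the $\sigma$-orbit quotients of the ADE paths.
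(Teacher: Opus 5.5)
Your overall strategy matches the paper's: quote the $ADE$ cases, fold $B_n$ and $C_n$ into a simply-laced type, describe the virtual Lusztig data, and reduce the $ADE$ path maximum to $\sigma$-symmetric paths. However, the way steps (1)--(2) assign ambient types breaks the argument for the two minuscule cases $(C_n,1)$ and $(B_n,n)$.

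In those cases $\phi^{-1}(r)$ is a \emph{pair} of nodes: $\{1,2n-1\}$ in $A_{2n-1}$, resp.\ $\{n,n+1\}$ in $D_{n+1}$. So $\phi^{-1}(J)$ is $\virtual{I}_0$ with two nodes removed. The virtual image of $\B^J$ then has $\virtual{\varepsilon}^*_{k'}=c_r\varepsilon^*_r$, nonzero in general, at \emph{both} preimages $k'$ of $r$. For $(C_n,1)$, already $f_1u_\infty\mapsto\virtual{f}_1\virtual{f}_{2n-1}\virtual{u}_\infty$ has $\virtual{\varepsilon}^*_{2n-1}=1$. Hence the image does not lie in $\virtual{\B}^{\virtual{I}_0\setminus\{k\}}$ for a single special node $k$. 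That parabolic crystal is the only place the $ADE$ path formula is available, so step (5) has nothing to apply.

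The paper avoids this by letting the node, not the type, choose the ambient algebra. For $r=n$, both $B_n$ and $C_n$ are folded into $A_{2n-1}$, where node $n$ is $\sigma$-fixed. For $r=1$, both are folded into $D_{n+1}$, where node $1$ is fixed. Then $\phi^{-1}(J)$ is $\virtual{I}_0$ minus a single minuscule node, and the image lands in the correct parabolic crystal, as in \eqref{eq:image-chi}.

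Your scaling factors also need correcting, and they are exactly what produces $\gamma_\beta$. The embedding $C_n\subset A_{2n-1}$ needs $c_n=2$, i.e.\ $f_n\mapsto\virtual{f}_n^{\,2}$. With all $c_i=1$ you get $B_n\subset A_{2n-1}$ instead; check \eqref{eq:virtual_compatibility} on $\alpha_{n-1}$. This is the $\kappa$ in the paper's identity $\kappa\,\varepsilon_n^*=\varepsilon_n^*\circ\chi$.

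For $B_n$ ($\kappa=1$), a long root $\epsilon_a+\epsilon_b$ unfolds to a two-element $\sigma$-orbit of type $A$ roots. Each carries $c_\beta$, and both lie on every $\sigma$-symmetric path. A short root $\epsilon_a$ unfolds to a single $\sigma$-fixed root carrying $c_\beta$. That is where $\gamma_\beta$ comes from.

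In your $B_n\subset D_{n+1}$ with $c_1=2$, by contrast, a long root $\epsilon_1\pm\epsilon_j$ goes to twice a $\sigma$-fixed root. After dividing by $c_1=2$ it contributes $c_\beta$, not $2c_\beta$, so this embedding does not explain $\gamma_\beta$. Indeed, for $r=1$ in type $B_n$ the element $\cc=\oneb_{\alpha_1}=f_1u_\infty$ has $\varepsilon_1^*(\cc)=1$. Yet $\gamma_{\alpha_1}=2$ and $\alpha_1$ lies on a path in $\mathscr{R}(B_n,1)$. So for $r=1$ the weights should be read off from the folding rather than targeted.

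Finally, the symmetrization in step (5) is vacuous as written. Since the datum is $\sigma$-invariant, $\Abs{\cdot}_{\mathring{\pp}}=\Abs{\cdot}_{\sigma(\mathring{\pp})}$, so pairing a path with its mirror gives nothing. What works in the $A_{2n-1}$ case is to cut $\mathring{\pp}$ at its unique vertex on the $\sigma$-fixed diagonal $\{\mathring{\epsilon}_i-\mathring{\epsilon}_{2n+1-i}\}$ and glue each half to its mirror image. This yields two $\sigma$-symmetric paths whose values average to that of $\mathring{\pp}$. The paper only asserts this step.
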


\begin{proof}
	For type $A_n$, the above formula can be essentially obtained from \cite{Greene74, Kwon13} by realizing $\B^J$ as the set of $r \times (n-r+1)$ non-negative integral matrices (see \cite[Sec.~4.3]{Kwon18}), or directly proven following \cite{JK19} by means of \cite{BZ01}, and for types $D_n$ and $E_{6,7}$, it was proved in \cite{JK19, Jang22}. Note that the case of type $D_n$ with $r = 1$ can be proven similarly as in \cite{JK19} by considering the minuscule crystal $B(\varpi_1)$.
    
    Now we focus on types $B_n$ and $C_n$ with $r \in \{ 1, n \}$.
    Here we give the details only for types $B_n$ and $C_n$ with $r=n$.
    The case for $r = 1$ are proved similarly.

    Recall $\kappa = 1$ for type $B_n$ and $\kappa = 2$ for type $C_n$.
    We define the following from folding type $A_{2n-1}$ to type $B_n, C_n$ (see, \textit{e.g.},~\cite{K96,OSS03III,OSS03II}).
    Let $\mathring{\g} = \mathfrak{gl}_{2n}$, and $\mathring{I} = \{ 1, 2, \dots, 2n-1 \}$.
    Let $\mathring{P} = \bigoplus_{i=1}^{2n} \ZZ \mathring{\epsilon}_i$ be the weight lattice of $\mathring{\g}$, and let $\{\,\mathring{\alpha}_i := \mathring{\epsilon}_i - \mathring{\epsilon}_{i+1} \,\mid\, i \in \mathring{I} \,\}$ be the set of simple roots of $\mathring{\g}$.
    We denote by $\mathring{W}$ the Weyl group of $\mathring{\g}$.
    Consider the automorphism $\sigma$ of Dynkin diagram of type $A_{2n-1}$ given by $\sigma(\mathring{\alpha}_i) = \mathring{\alpha}_{2n-i}$ for $i \in \mathring{I}$.
    It is easy to see $\sigma$ induces a bijection on the set of all paths $\mathscr{R}(A_{2n-1}, n)$, which we also denote by $\sigma$ by abuse of notation.

    Let $- \colon Q \rightarrow \mathring{Q}$ be an embedding given by 
    \[
        \overline{\alpha_i} = 
        \begin{cases}
            \mathring{\alpha}_i + \mathring{\alpha}_{2n-i} & \text{if } 1 \le i \le n-1, \\
            \kappa \mathring{\alpha}_n & \text{if } i = n.
        \end{cases}
    \]
    It is well-known that the map $- \colon W \rightarrow \mathring{W}$ given by 
    \[
        \overline{s_i} = 
        \begin{cases}
            \mathring{s}_i \mathring{s}_{2n-i} & \text{if } 1 \le i \le n-1, \\
            \mathring{s}_n & \text{if } i = n,
        \end{cases}
    \]
    is an injective group homomorphism, where $\mathring{s}_k$ is the simple reflection in $\mathring{W}$ associated to $\mathring{\alpha}_k$ for $1 \le k \le 2n-1$. 
    Note that 
    \begin{equation} \label{eq:compatibility of Weyl actions}
        \overline{w\vphantom{\lambda}}(\overline{\lambda}) = \overline{w(\lambda)}
    \end{equation}
    for $w \in W$ and $\lambda \in Q$ \cite{FRS97}.
    One can take a reduced expression $\mathring{\ii}_0$ of the longest element $\mathring{w}_0 \in \mathring{W}$ such that 
    \begin{equation} \label{eq:type A i0J}
        \mathring{\bf i}_0^J = \mathring{\ii}_0^{(1)} \mathring{\ii}_0^{(2)} \dotsm \mathring{\ii}_0^{(n)} \in R(\mathring{w}_0^J),
    \end{equation}
    where $\mathring{\ii}_0^{(s)} = (n+s-1, n+s-2, \dots, s+1, s)$ for $1 \le s \le n$, and we can obtain any $\overline{\ii_0} \in R(\overline{w_0})$ from $\mathring{\ii}_0$ by commutation or $2$-braid moves.

    Let $\mathring{B}(\infty)$ be the crystal of $U_q^-(\mathring{\g})$. Then we define
    \[
        \boldsymbol{\et}_i = 
        \begin{cases}
            \mathring{\et}_i \mathring{\et}_{2n-i} & \text{ if }  1 \le i \le n-1, \\
            \mathring{\et}_n^\kappa & \text{ if } i =n,
        \end{cases}
        \qquad\qquad
        \boldsymbol{\ft}_i = 
        \begin{cases}
            \mathring{\ft}_i \mathring{\ft}_{2n-i} & \text{ if }  1 \le i \le n-1, \\
            \mathring{\ft}_n^\kappa & \text{ if } i =n,
        \end{cases}
    \]
    where $\mathring{\et}_i$ and $\mathring{\ft}_i$ are the Kashiwara operators on $\mathring{B}(\infty)$. 
    Also we define $\boldsymbol{\et}_i^\ast = \ast \circ \boldsymbol{\et}_i \circ \ast$ and $\boldsymbol{\ft}_i^\ast = \ast \circ \boldsymbol{\ft}_i \circ \ast$, where $\ast$ is the bijection on $\mathring{B}(\infty)$ induced from the anti-involution $\ast$ on $U_q^-(\mathring{\g})$.
    It is well-known\footnote{This can also be easily deduced from~\cite{SalisburyS15II,SalisburyS16II}.} that there exists an injective map $\chi \colon B(\infty) \to \mathring{B}(\infty)$ such that
    \[
    \chi(\et_i b) = \boldsymbol{\et}_i \chi(b),
    \qquad \chi(\et_i^\ast b) = \boldsymbol{\et}_i^\ast \chi(b),
    \qquad \chi(\ft_i b) = \boldsymbol{\ft}_i \chi(b),
    \qquad \chi(\ft_i^\ast b) = \boldsymbol{\ft}_i^\ast \chi(b),
    \]
    and $\overline{\wt(b)} = \wt\bigl(\chi(b)\bigr)$ for $b \in B(\infty)$ and $i \in I$.
    In particular, for $b \in B(\infty)$, we have
    \begin{equation} \label{eq:epsilon-n-ast-chi}
        \kappa \varepsilon_n^*(b) = \varepsilon_n^*\bigl(\chi(b)\bigr).
    \end{equation}

    For $\cc = (c_k) \in \B_{\mathring{\ii}_0}$, set $c_k = c_{ij}$ if $\mathring{\beta}_k = \mathring{\epsilon}_i - \mathring{\epsilon}_j$ for some $1 \le i < j \le 2n$, where $\mathring{\beta}_k = \mathring{s}_{j_1} \cdots \mathring{s}_{j_{k-1}}(\mathring{\alpha}_{j_k})$ comes from the convex ordering associated to $\mathring{\ii}$.
    By \cite[Thm.~ 4.6]{Kwon18a} (by means of \eqref{eq:compatibility of Weyl actions}), the map $\chi \colon B(\infty) \to \mathring{B}(\infty)$ can be characterized in terms of $\B_{\ii_0}$ and $\B_{\mathring{\ii}_0}$ so that image of $\chi \colon \B_{\ii_0} \to \B_{\mathring{\ii}_0}$ is given by
    \begin{equation} \label{eq:image-chi}
        \chi\left( \B_{\ii_0} \right) = 
        \{
            \cc \in \B_{\mathring{\ii}_0} \,\mid\,
            \sigma(\cc) = \cc \text{ and } \kappa | c_{k\, 2n-k+1} \, (1 \leqslant k \leqslant n)\,
        \},
    \end{equation}
    where $\sigma$ is the bijection on $\B_{\ii_0}$ given by $\sigma(\cc) = (c_{ij}^\sigma)$ with $c_{ij}^\sigma = c_{2n-j+1, 2n-i+1}$ $(1 \le i < j \le 2n)$ for $\cc = (c_{ij}) \in \B_{\mathring{\ii}_0}$. 
    
    Also, it follows from \cite[Eq.~(4.5)]{Kwon18a} that $\chi(\cc) = (d_1, \dots, d_M)$ is given by 
    \begin{equation} \label{eq:relationship of c and d}
        d_k= \begin{cases}
        c_j & \text { if } \mathring{\beta}_k \neq \sigma(\mathring{\beta}_k) \text { and } \mathring{\beta}_k+\sigma(\mathring{\beta}_k)=\overline{\beta_j} \text { for some } 1 \leqslant j \leqslant N, \\ 
        \kappa c_j & \text { if } \mathring{\beta}_k=\sigma(\mathring{\beta}_k) \text { and } \epsilon \mathring{\beta}_k=\overline{\beta_j} \text { for some } 1 \leqslant j \leqslant N,\end{cases}
    \end{equation}
    for $1 \le k \le M$.
    By \eqref{eq:epsilon-n-ast-chi}, for $\cc \in \B^J$, we have
    \begin{equation} \label{eq:epsilon ast from B to A}
        \kappa \varepsilon_n^\ast(\cc) 
        = \varepsilon_n^\ast\bigl(\chi(\cc)\bigr)
        = \max \{\Abs{\chi(\cc)}_{\mathring{\pp}} \mid {\mathring{\pp} \in \mathscr{R}(A_{2n-1}, n)} \}.
    \end{equation}
    Note that it follows from \eqref{eq:image-chi} that the above maximum should be attained at a path $\mathring{\pp}$ satisfying $\mathring{\pp} = \sigma(\mathring{\pp})$.
    
    Set $\mathscr{R}(A_{2n-1}, n)^{\sigma} := \{\mathring{\pp} \in \mathscr{R}(A_{2n-1}, n) \mid \mathring{\pp} = \sigma(\mathring{\pp}) \}$.
    Let $\pp = (\beta_{k_1}, \dots, \beta_{k_m}) \in \mathscr{R}(X_n, n)$ be given. Put $\overline{\pp} = (\overline{\beta_{k_1}}, \dots, \overline{\beta_{k_m}})$.
    Recall that $\overline{\beta_{k_l}}$ is expressed as a sum of one or two positive roots of type $A_{2n-1}$ (see \eqref{eq:relationship of c and d}).
    We define $\mathring{\pp} = (\mathring{\beta}_{j_1}, \dots,  \mathring{\beta}_{j_{m'}})$ (for some $m'$) to be the sequence obtained from $\overline{\pp}$ by replacing each term $\overline{\beta_{k_l}}$ with its one or two positive root summands.
    By \eqref{eq:compatibility of Weyl actions} and~\eqref{eq:type A i0J}, we have $\mathring{\pp} \in \mathscr{R}(A_{2n-1}, n)$.
    Then it is straightforward to check that the map 
    \begin{equation} \label{eq:path-bijection}
    \begin{split}
        \xymatrix@R=0em{
            \mathscr{R}(X_n, n) \ar@{->}[r] & \mathscr{R}(A_{2n-1}, n)^\sigma \\ 
            \pp \ar@{->}[r] & \overline{\pp}
        }
    \end{split}
    \end{equation}
    is bijective.
    By \eqref{eq:image-chi}, \eqref{eq:relationship of c and d}, \eqref{eq:epsilon ast from B to A}, and \eqref{eq:path-bijection}, we have 
    \[
        \varepsilon_n^\ast(\cc)  = 
        \frac{1}{\kappa} \max \{\Abs{\chi(\cc)}_{\mathring{\pp}} \mid {\mathring{\pp} \in \mathscr{R}(A_{2n-1}, n)^\sigma} \} = 
        \{ \max \Abs{\cc}_{\pp} \mid {\pp \in \mathscr{R}(X_n, n)} \}
    \]
    as desired.

	In the case of $r = 1$, let $\mathring{\g}$ be of type $D_{n+1}$, that is, $\mathring{\g} = \mathfrak{so}_{2(n+1)}$, and consider the automorphism $\sigma$ of the Dynkin diagram of $\mathring{\g}$ defined by 
    \[
    \sigma(\mathring{\alpha}_i) = \mathring{\alpha}_i \quad (i \neq n, n+1), \qquad 
    \sigma(\mathring{\alpha}_n) = \mathring{\alpha}_{n+1}, \qquad 
    \sigma(\mathring{\alpha}_{n+1}) = \mathring{\alpha}_n,
    \]
    In this setting, a similar argument to the one above then applies.
\end{proof}

\begin{ex}
  We give an example of the constructions used in the proof of Theorem~\ref{thm:path formula}.
  Recall Example~\ref{ex:paths in B3, C3}, where
  \[
    \vcenter{
      \xymatrix@C=0.5em @R=0.5em{
        & & \overset{\beta_3}{\bullet} \ar@{->}[dl]
             & & \\
        & \overset{\beta_2}{\bullet} \ar@{->}[dl]
          & & \overset{\beta_5}{\bullet} & \\
        \overset{\beta_1}{\bullet}
          & & \overset{\beta_4}{\bullet}
          & & \overset{\beta_6}{\bullet}
      }
    },
    \qquad\qquad
    \vcenter{
      \xymatrix@C=0.5em @R=0.5em{
        & & \overset{\beta_3}{\bullet} \ar@{->}[dl]
             & & \\
        & \overset{\beta_2}{\bullet} \ar@{->}[dr]
          & & \overset{\beta_5}{\bullet} & \\
        \overset{\beta_1}{\bullet}
          & & \overset{\beta_4}{\bullet}
          & & \overset{\beta_6}{\bullet}
      }
    }.
  \]
  are two paths in $\mathscr{R}(C_3, 3)$.
  For this case, we have $\ii^J_0 = (3, 2, 3, 1, 2, 3) \in R(w_0^J)$, and therefore $\overline{\ii}_0^J = (3, 2,4, 3, 1,5, 2,4, 3)$ and $\mathring{\ii}_0^J = (3,2,1,4,3,2,5,4,3)$, which we can see differ only by $2$-term braid moves.
  The convex order we consider is $\beta_1 = \alpha_3$, $\beta_2 = \alpha_2 + \alpha_3$, $\beta_3 = \alpha_1 + \alpha_2 + \alpha_3$, $\beta_4 = 2\alpha_2 + \alpha_3$, $\beta_5 = \alpha_1 + 2\alpha_2 + \alpha_3$, and $\beta_6 = 2\alpha_1 + 2\alpha_2 + \alpha_3$.
  For $\pp = (\beta_3, \beta_2, \beta_1) \in \mathscr{R}(C_3, 3)$, we associate
  \[
  	\mathring{\pp} = (\mathring{\alpha}_3,\, \mathring{\alpha}_2 + \mathring{\alpha}_3, \mathring{\alpha}_3 + \mathring{\alpha}_4, 
	\mathring{\alpha}_1 + \mathring{\alpha}_2 + \mathring{\alpha}_3, \mathring{\alpha}_3 + \mathring{\alpha}_4 + \mathring{\alpha}_5),
  \]
  where $\sigma(\mathring{\alpha}_3) = \mathring{\alpha}_3$, $\sigma(\mathring{\alpha}_2 + \mathring{\alpha}_3) = \mathring{\alpha}_3 + \mathring{\alpha}_4$ and $\sigma(\mathring{\alpha}_1 + \mathring{\alpha}_2 + \mathring{\alpha}_3) = \mathring{\alpha}_3 + \mathring{\alpha}_4 + \mathring{\alpha}_5$.
  Drawing this and the other path given above as paths in $\mathscr{R}(A_5, 3)$, we have
  \[
    \vcenter{
      \xymatrix@C=0.5em @R=0.5em{
        & & \overset{\mathring{\beta}_3}{\bullet} \ar@{->}[dl]
             & & \\
        & \overset{\mathring{\beta}_2}{\bullet} \ar@{->}[dl]
          & & \overset{\mathring{\beta}_6}{\bullet} & \\
        \overset{\mathring{\beta}_1}{\bullet} \ar@{->}[dr]
          & & \overset{\mathring{\beta}_5}{\bullet}
          & & \overset{\mathring{\beta}_9}{\bullet} \\
          &        \overset{\mathring{\beta}_4}{\bullet} \ar@{->}[dr]
          & & \overset{\mathring{\beta}_8}{\bullet} \\
                    & & \overset{\mathring{\beta}_7}{\bullet}
      }
    },
    \qquad\qquad
    \vcenter{
      \xymatrix@C=0.5em @R=0.5em{
        & & \overset{\mathring{\beta}_3}{\bullet} \ar@{->}[dl]
             & & \\
        & \overset{\mathring{\beta}_2}{\bullet} \ar@{->}[dr]
          & & \overset{\mathring{\beta}_6}{\bullet} & \\
        \overset{\mathring{\beta}_1}{\bullet}
          & & \overset{\mathring{\beta}_5}{\bullet} \ar@{->}[dl]
          & & \overset{\mathring{\beta}_9}{\bullet} \\
          &        \overset{\mathring{\beta}_4}{\bullet} \ar@{->}[dr]
          & & \overset{\mathring{\beta}_8}{\bullet} \\
                    & & \overset{\mathring{\beta}_7}{\bullet}
      }
    }.
  \]
  where we have labeled the nodes according to the convex order associated to $\mathring{\ii}^J_0$.
\end{ex}

\begin{rem}
    For $(X_n, r) = (B_n, n)$ or $(C_n, 1)$, the same formula for $\varepsilon_r^\ast(\cc)$ ($\cc \in \B^J$) can also be obtained from the transition map between the string and Lusztig parameterizations in terms of $\ii$-trails~\cite[Thm.~3.7]{BZ01} as in~\cite{JK19}. 
    Specifically,~\cite{JK19} characterizes the $\ii_0^{\rm op\ast}$-trails from $\varpi_n$ (resp.~$s_n\varpi_n$) to $w_0\varpi_n$ in type $D_n$, where $\ii_0^\ast = (i_1^\ast, \dots, i_N^\ast)$ by $w_0(\alpha_i) = -\alpha_{i^\ast}$ $(i \in I)$, and $\ii_0^{\rm op} = (i_N, \dots, i_1)$.
Note that when the fundamental ${}^L\g$-module $V(\varpi_r^\vee)$ is minuscule (where $\varpi_r^\vee$ denotes the $r$-th fundamental weight of ${}^L\g$), the $\ii$-trails can be described in terms of the crystal $B(\varpi_r^\vee)$;
this fact is essential in \cite{JK19}.
Thus the approach of \cite{JK19} cannot be applied directly to the case of $(X_n, r) = (B_n, 1)$ or $(C_n, n)$, since $V(\varpi_r^\vee)$ is not minuscule in these cases.
However, by analyzing $V(\varpi_r^\vee)$, we show that it suffices to consider the \emph{extremal} $\ii_0^{\rm op\ast}$-trails from $s_r\varpi_r^\vee$ to $w_0\varpi_r^\vee$, which are explicitly characterized in~\cite[Prop.~9.2]{BZ01}.
This reduction enables us to apply the approach of~\cite{JK19} to the case of $(X_n, r) = (B_n, 1)$ or $(C_n, n)$, yielding the same formula for $\varepsilon_r^\ast(\cc)$.
\end{rem}

\bibliographystyle{alpha}
\bibliography{kr_crystals}{}

\end{document}